\documentclass[11pt]{article}
\usepackage[tbtags]{amsmath}
\usepackage{cases}
\usepackage{mathrsfs}
\usepackage{amsfonts}
\usepackage{amsmath,amsthm,amssymb}

\topmargin        -0.40in
\oddsidemargin    0.08in
\evensidemargin   0.08in
\marginparwidth   0.00in
\marginparsep     0.00in
\textwidth       15.5cm
\textheight      23.5cm

\pagestyle{myheadings}

\usepackage{color}

\newcommand{\ds}{\displaystyle}

\theoremstyle{plain}
\newtheorem{theorem}{Theorem}[section]
\newtheorem{lemma}{Lemma}[section]
\newtheorem{remark}{Remark}[section]
\newtheorem{proposition}{Proposition}[section]
\newtheorem{corollary}{Corollary}[section]
\pagestyle{plain}
\newif \ifLastSection \LastSectionfalse

\numberwithin{equation}{section}

\begin{document}

\title{{\bf {Convergence to diffusion waves for solutions of Euler equations with time-depending damping on quadrant}}}

\author{H{\sc aibo}  C{\sc ui}\thanks{School of Mathematical Sciences, Huaqiao University, Quanzhou 362021, P.R. China. Email:
hbcui@hqu.edu.cn}, \quad H{\sc aiyan} Y{\sc in}\thanks{School of Mathematical Sciences, Huaqiao University, Quanzhou 362021, P.R. China. Email: hyyin@hqu.edu.cn},  \quad C{\sc hangjiang} Z{\sc
hu}\thanks{Corresponding author. School of Mathematics, South China
University of Technology, Guangzhou 510641, P.R. China. Email:
machjzhu@scut.edu.cn},
\quad L{\sc imei} Z{\sc
hu}\thanks{School of Mathematics, South China
University of Technology, Guangzhou 510641, P.R. China. Email:
287365401@qq.com} }

\date{}

\maketitle

\textbf{{\bf Abstract:}} This paper is concerned with the asymptotic
behavior of  the solution to the Euler equations with time-depending
damping on quadrant $(x,t)\in \mathbb{R}^+\times\mathbb{R}^+$,
\begin{equation}\notag
\partial_t  v
  -
   \partial_x u=0, \qquad
\partial_t u
   +
      \partial_x  p(v)
     =\displaystyle
     -\frac{\alpha}{(1+t)^\lambda} u,
\end{equation}
with null-Dirichlet boundary condition  or null-Neumann boundary
condition on $u$. We show that the corresponding initial-boundary value
problem admits a unique global smooth solution which tends
time-asymptotically  to the nonlinear diffusion wave.  Compared with
the previous work about
 Euler equations with constant coefficient damping,  studied by Nishihara and Yang  (1999, J. Differential Equations, 156,
439-458), and Jiang and Zhu (2009, Discrete Contin. Dyn. Syst., 23, 887-918), we obtain a general result  when the initial
perturbation belongs to the same space. In  addition, our main novelty lies in the facts that the cut-off points of the convergence rates  are different from our  previous result about the Cauchy problem. Our proof is based on the classical energy method and the  analyses  of the nonlinear diffusion wave.

\bigbreak \textbf{{\bf Key Words}:}  Euler equations with
time-depending damping, nonlinear diffusion waves, initial-boundary
value problem, decay estimates.

\bigbreak  {\textbf{AMS Subject Classification :} 35L65, 76N15,
35B45, 35B40.}

\tableofcontents
\section{Introduction}
In this paper, we consider the asymptotic behavior and the convergence rates of solutions to the one-dimensional compressible Euler equations with time-depending damping:
\begin{equation}\label{1.3}
\left\{\begin{array}{l}
\partial_t  v
  -
   \partial_x u=0,\\[2mm]
\partial_t u
   +
      \partial_x  p(v)
     =\displaystyle
     -\frac{\alpha}{(1+t)^\lambda} u,\qquad (x,t)\in \mathbb{R}^+\times\mathbb{R}^+,
 \end{array}
        \right.
\end{equation}
with initial data
\begin{equation}\label{1.4}
  (v,u)\mid_{t=0}=(v_0,u_0)(x)\rightarrow (v_+,u_+),
   \quad
    \mbox{as}
     \quad
     x\rightarrow  +\infty
      \quad
        \mbox{and}
         \quad
           v_+>0,
\end{equation}
and  the null-Dirichlet boundary condition
\begin{equation}\label{bound1}
u(0,t)=0,
\end{equation}
or the null-Neumann boundary condition
\begin{equation}\label{bound2}
\partial_x u(0,t)=0,
\end{equation}
where  $v=v(x,t)>0$ is the specific volume, $u=u(x,t)$ is the velocity  and the pressure $p(v)>0$ is a smooth function with
$p'(v)<0$ for any $v\in \mathbb{R}^+$. The external term
$-\frac{\alpha}{(1+t)^\lambda} u$ with  physical coefficients
$\alpha>0$ and $\lambda\geq0$, is called a time-depending
damping. $v_+>0$ and $u_+$ are constant states.

The system  \eqref{1.3} is not only a mathematical model of the  wave equation   with time-depending dissipation \cite{Wirth1,Wirth2}
 \begin{equation}\notag
\omega_{tt}-\omega_{xx}+b(t)\omega_t=0,
\end{equation}
 but it models the compressible flow through porous media with unsteady drag force. For more information about this model, see for instance \cite{Dafermos} and
references cited therein for related models.

When $\alpha=0$, the system \eqref{1.3} reduces to the standard
compressible Euler equations which is an extremely important
equation to describe the motion of compressible ideal fluids. There
have been many important
 developments and extensive studies on the Euler equations in the past
few decades.

%(see, for example \cite{Alinhac1999-1,Alinhac1999-2} and references therein)
%It is well known that smooth solutions of \eqref{1.3} will in general blow up in finite time.
%
When $\alpha>0, \lambda=0$, the system \eqref{1.3} becomes the
compressible Euler equations with constant coefficient damping.
The global
   existence and large time behaviors of smooth solutions to the Cauchy problem or initial-boundary problem \eqref{1.3} have been investigated by many authors.

 (i) For the Cauchy problem, the global existence of solution has been investigated by many authors (see \cite{Nishida1978} and references therein). Hsiao and Liu \cite{Hsiao-Liu1992} firstly considered  the large time behavior of solution.
 Precisely, they showed that the solution of \eqref{1.3} tended time-asymptotically to
 the nonlinear diffusion waves. And a better convergence rate was obtained by Nishihara \cite{Nishihara1996}. In the case of large initial data, Zhao in \cite{Zhao2001} showed that for a certain class of given large initial data, the system \eqref{1.3} admits a unique global smooth solution  and such a solution tends time-asymptotically to the strong diffusion wave. For other results, see \cite{Huang-Pan2003,Huang-Pan2006,Nishihara-Wang-Yang2000,Wang-Yang2003,Zhu2003} and some references therein.

(ii) For the initial-boundary value problem on a half line $\mathbb{R}^+$, we
refer to
\cite{Jiang-Zhu2009,Marcati-Mei2000,Marcati-Mei-Rubino2005,Nishihara-Yang1999}. Precisely, Marcati and Mei in \cite{Marcati-Mei2000} studied the system
\eqref{1.3} with boundary condition on $v$ as follows:
\begin{equation}\notag
v(0,t)=g(t), \qquad t>0.
\end{equation}
 Nishihara and Yang in  \cite{Nishihara-Yang1999} considered the asymptotic behavior of solution to the system \eqref{1.3} with the Dirichlet boundary condition \eqref{bound1} or the Neumann boundary condition \eqref{bound2}. More precisely,  for the Dirichlet boundary condition,  they got the global  existence and convergence rates in form of  $\|(v-\bar{\bar v},u-\bar{\bar u})\|_{L^\infty}\leq C(t^{-\frac{3}{4}}, t^{-\frac{5}{4}})$  by perturbing the initial value around  the linear diffusion waves $(\bar{\bar{v}}, \bar{\bar{u}})(x,t)$ which
 satisfied
  \begin{equation}\notag
\left\{\begin{array}{l}
\partial_t  \bar{\bar{v}}
  -
   \partial_x \bar{\bar{ u}}=0,\\[2mm]
       p'(v_+)\partial_x\bar{\bar{v}}
     =\displaystyle
     -\alpha \bar{\bar{u}},\qquad (x,t)\in \mathbb{R}^+\times\mathbb{R}^+,\\[2mm]
     (\bar{\bar v},\bar{\bar u})\mid_{t=0}=(\bar{\bar v}_0,\bar{\bar u}_0)(x)\rightarrow (v_+,0),\quad
    \mbox{as}
     \quad
     x\rightarrow  +\infty,\\[2mm]
     \bar{\bar u}(0,t)=0 \ \ (\mbox{or} \ \ \partial_x\bar{\bar{v}}(0,t)=0  ),\qquad \bar{\bar{u}}(
     \infty,t)=0,
 \end{array}
        \right.
\end{equation}
provided the initial perturbation belonged to $H^3(\mathbb{R}^+)\times H^2(\mathbb{R}^+)$.
 In \cite{Marcati-Mei-Rubino2005}, for the Dirichlet boundary condition,  Marcati, Mei and Rubino improved the convergence rates to  $\|(v-\check v,u-\check u)\|_{L^\infty}\leq C(t^{-1}, t^{-\frac{3}{2}})$  by perturbing the initial value around  the nonlinear diffusion waves $(\check{v}, \check{u})(x,t)$ which
 satisfied
 \begin{equation}\notag
\left\{\begin{array}{l}
\partial_t  \check{v}
  -
   \partial_x \check{ u}=0,\\[2mm]
      \partial_x  p(\check{v})
     =\displaystyle
     - \alpha  \check{u},\qquad (x,t)\in \mathbb{R}^+\times\mathbb{R}^+,\\[2mm]
     (\check v,\check u)\mid_{t=0}=(\check v_0,\check u_0)(x)\rightarrow (v_+,0),\quad
    \mbox{as}
     \quad
     x\rightarrow  +\infty,\\[2mm]
     \check u(0,t)=0 \ \ (\mbox{or} \ \ \partial_x \check{ v}(0,t)=0  ),\qquad \check{u}(  \infty,t)=0,
 \end{array}
        \right.
\end{equation}
  when the initial perturbation additionally belonged to $ L^1(\mathbb{R}^+)$.
Later,  Jiang and Zhu in \cite{Jiang-Zhu2009}
obtained the same convergence rates  as in
\cite{Marcati-Mei-Rubino2005} under a rather weaker small assumption
on the initial disturbance. For the initial-boundary value problem on
the bounded domain $[0,1]$, we refer to \cite{Hsiao-Pan1999}. For
initial-boundary value problem to the compressible Euler equations
with nonlinear damping, we refer to
\cite{Jiang-Zhu20092,Lin-Lin-Mei2010} and some references therein.

When  $\alpha>0, \lambda>0$,  the system \eqref{1.3} is the
compressible Euler equations with time-depending damping.  The
authors   \cite{Hou-Witt-Yin2015,Hou-Yin2016}  considered the
global existence of smooth solutions when $u_0\in
C_0^\infty(\mathbb{R}^3)$ for $0\leq \lambda \leq 1$ in
multi-dimensions. And they proved the solutions will blow up in
finite time for $\lambda>1$. For more results about this direction,
we can refer to \cite{Pan2016,Pan2017} and references cited therein.
Recently, the authors   \cite{Cui-Yin-Zhang-Zhu2016} considered
the Cauchy problem for the system \eqref{1.3} and proved that the
solution time-asymptotically converged to the nonlinear diffusion
waves  with the initial perturbation
around the diffusion wave in  $H^3(\mathbb{R})\times
H^2(\mathbb{R})$. For more  literature on this model, see \cite{Hou2017,Pan3} and the references therein.

However, to our knowledge, there are very few results  on the
large-time behavior  of solutions near nonlinear diffusion waves to
the initial-boundary value problem \eqref{1.3}-\eqref{bound2} for $0<\lambda<1$. It
is very interesting and challenging to study this problem because it has more
physical meanings and of course some new mathematical difficulties
 will arise due to the boundary effect and time-depending damping.
 In this paper, we will consider the initial-boundary value problem of \eqref{1.3} on a half line $\mathbb{R}^+ $
 and obtain the convergence to diffusion waves for classical solution  compared with previous results about
 Euler equations with constant coefficient damping.

%Before concluding this section,

Now, we give the main ideas used in
deducing our results.   In the case of  Dirichlet boundary condition, the main difficulty of this paper lies in obtaining  the decay rates of the diffusion waves. The general strategy is to construct the self-similar solution.   However, in our diffusion waves system, the equations  don't possess self-similar solution. One  possible way to get around these issues is to   explicitly
write  out the solution by Green function as in Nishihara and Yang \cite{Nishihara-Yang1999}. But, in our case, we cannot achieve the expected results because  the diffusion waves is nonlinear. Another option would be to construct nonlinear diffusion wave by iteration. However, achieving the Green function of the Dirichlet type IBVP to the nonlinear  diffusion   waves  share the same difficulty as directly solving the solution.   Our strategy,  inspired by the work of  Jiang and Zhu \cite{Jiang-Zhu20092}, is to employ extension the initial data to the real line and consider  the
corresponding linearized problem \eqref{a1.a7}. Based on  some
delicate energy estimates, we can get the decay rates of nonlinear
diffusion waves $(\bar v,\bar u)(x,t)$ indirectly. Because of the different decay rates of the diffusion waves, we obtain the cut-off point of the convergence rate is $\lambda=\frac{3}{5}$  while  the Cauchy problem
\cite{Cui-Yin-Zhang-Zhu2016} is $\lambda=\frac{1}{7}$. On the other hand,  for the case of Neumann boundary condition, we construct the self-similar diffuse waves. Therefore, the cut-off point of the convergence rate is $\lambda=\frac{1}{7}$ if $v_0(0)\neq v_+$.  However, there is no cut-off point of the convergence rate if $v_0(0)=v_+$ because the diffusion waves are constant states.  Finally, we also   take full use of the weight
function $(1+t)^\beta$  to
overcome  the time-depending damping.

The rest of the paper is organized as follows.  In Section
\ref{S2}, we derive the convergence in the case of Dirichlet boundary condition.  In Section
\ref{S2.1}, the problem with  null-Dirichlet boundary condition is
reformulated and the main results will be stated.
  In Section \ref{S2.2}, we will obtain the dissipative properties of the nonlinear diffusion waves $(\bar u, \bar u)(x,t)$. In Section \ref{S2.3}, the proofs of Theorem will be given, much of that is base on the papers \cite{Cui-Yin-Zhang-Zhu2016}. In Section  \ref{S4}, we will study the null-Neumann boundary problem.

{\bf Notations}: In the following, $C$ and  $c$($C_i$,$c_i$) denote the generic positive constants depending only on the initial data
and the physical coefficients $\alpha,\lambda$, but independent of the time.  For two quantities $a$ and $b$, $a \sim b$ means $\frac{1}{C}|b|\leq |a| \leq C|b|$
for a generic constant $C$.   $\|\cdot\|_{L^p}$ and $\|\cdot\|_{H^l}$ are denote by $\|\cdot\|_{L^p(\mathbb{R}^+)}$,  $\|\cdot\|_{H^l(\mathbb{R}^+)}$, for $1\leq p\leq \infty$, $l\geq 0$, respectively.

\section{The case of Dirichlet boundary condition}\label{S2}
{\numberwithin{equation}{subsection}

\subsection{Reformulation of the problem and main results}\label{S2.1}

We firstly consider the problem \eqref{1.3}-\eqref{1.4} with the
Dirichlet boundary condition \eqref{bound1}. Hinted by
$(\ref{1.3})_2$ and the initial data \eqref{1.4}, we suppose for any
$t\geq 0$
\begin{equation}\notag
 u(x,t)\rightarrow u_+\beta(t),
   \quad
    \mbox{as}
     \quad
     x\rightarrow +  \infty ,
\end{equation}
where
\begin{equation}\notag
\beta(t)=
\left\{
\begin{array}{l}
e^{-\frac{\alpha}{1-\lambda}[(1+t)^{1-\lambda}-1]},\qquad\   \mbox{ if} \qquad \lambda\in[0,1),\\
(1+t)^{-\alpha},\qquad\qquad\qquad \mbox{if} \qquad \lambda=1.
\end{array}
\right.
\end{equation}
Denote
\begin{equation}\notag
B(t)=-\int_t^\infty \beta(\tau)d \tau.
\end{equation}
From Darcy' law and asymptotic analysis, it is well-known that the first term $u_t$ of $(\ref{1.3})_2$ decay to zero, as $t\rightarrow  \infty$, faster than the term $-\frac{\alpha}{(1+t)^\lambda} u$ for some  $0\leq\lambda<1$. Therefore, we expect the solution $(v,u)(x,t)$  of
\eqref{1.3}-\eqref{bound1} time-asymptotically behaves as the solutions $(\bar v,\bar u)(x,t)$ of
\begin{equation}\label{a1.5}
\left\{\begin{array}{l}
\partial_t  \bar{v}
  -
   \partial_x \bar{ u}=0,\\[2mm]
      \partial_x  p(\bar{v})
     =\displaystyle
     -\frac{\alpha}{(1+t)^\lambda} \bar{u},\qquad (x,t)\in \mathbb{R}^+\times\mathbb{R}^+,\\[2mm]
     (\bar v,\bar u)\mid_{t=0}=(\bar v_0,\bar u_0)(x)\rightarrow (v_+,0),\quad
    \mbox{as}
     \quad
     x\rightarrow  +\infty,\\[2mm]
     \bar u(0,t)=0 \ \ (\mbox{or} \ \ \bar v_x(0,t)=0  ),\qquad \bar{u}(  \infty,t)=0,
 \end{array}
        \right.
\end{equation}
where $\bar v_0(x)$ satisfies
\begin{equation}\notag
\bar v_0(x)>0,
\end{equation}
and
\begin{equation}\label{a1.61}
\begin{split}
\int_0^\infty(\bar v_0(y)-v_+)dy
 =\int_0^\infty( v_0(y)-v_+)dy-u_+ B(0)
.
\end{split}
\end{equation}
Therefore
\begin{equation}\label{a1.6}
\int_0^\infty(v_0(y)-\bar v_0(y))dy=u_+ B(0).
\end{equation}

Next, as in \cite{Nishihara-Yang1999}, we define a pair of correction functions
\begin{equation}\label{1.10}
\hat{v}(x,t)=u_+ m_0(x)B(t)
\end{equation}
and
\begin{equation}\label{1.11}
\hat{u}(x,t)=u_+\beta(t)\int_{0}^x m_0(y)dy,
\end{equation}
where $m_0(x)$ is a smooth function with compact support such that
\begin{equation}\notag
\int_{0}^\infty m_0(x)dx=1, \qquad \mbox{supp}\  m_0(x)\subset \mathbb{R}^+.
\end{equation}
Therefore, $(\hat v,\hat u)(x,t)$ satisfies
\begin{equation}\label{1.12}
\left\{\begin{array}{l}
\partial_t  \hat{v}
  -
   \partial_x \hat{ u}=0,\\[2mm]
      \partial_t\hat{u}
     =\displaystyle
     -\frac{\alpha}{(1+t)^\lambda}\hat{u},\qquad (x,t)\in \mathbb{R}^+\times\mathbb{R}^+,\\[2mm]
     \hat u(0,t)=0,\qquad (\hat v, \hat u)(\infty,t)=(0,u_+\beta(t)).
 \end{array}
        \right.
\end{equation}
Combining \eqref{1.3}, \eqref{a1.5} and \eqref{1.12}, we have
\begin{equation}\label{1.17}
\left\{\begin{array}{l}
\partial_t  (v-\bar{v}-\hat{v})
  -
   \partial_x (u-\bar{ u}-\hat{u})=0,\\[2mm]
      \partial_t (u-\bar{u}-\hat{u})+\partial_x  (p(v)-p(\bar{v}))
         + \partial_t \bar u
           +\displaystyle
              \frac{\alpha}{(1+t)^\lambda} (u-\bar{u}-\hat{u})=0\\[2mm]
               (u-\bar{u}-\hat{u})(0,t)=0,\qquad (u-\bar{u}-\hat{u})(\infty,t)=0.
 \end{array}
        \right.
\end{equation}
By \eqref{a1.6} and $(\ref{1.17})_3$, the integration of $(\ref{1.17})_1$ with respect to $x$  and $t$ over $\mathbb{R}^+\times [0,t]$ yields
\begin{equation}\notag
\int_{0}^\infty(v -\bar v -\hat v )dx=\int_0^\infty (v_0(x)-\bar v_0(x))dx- u_+ B(0)=0,
\end{equation}
and hence we reach the setting of perturbation
\begin{equation}\label{1.13}
\omega(x,t)=-\int_x^{\infty} (v(y,t)-\bar v(y,t)-\hat{v}(y,t))dy,
\end{equation}
and
\begin{equation}\label{1.18}
z(x,t)= u(x,t)-\bar u(x,t)-\hat{u}(x,t).
\end{equation}
By \eqref{1.17}, we have the reformulated problem
\begin{equation}\label{1.19}
\left\{\begin{array}{l}
  \omega_t
  -
   z=0,\qquad\qquad\qquad (x,t)\in \mathbb{R}^+\times\mathbb{R}^+,\\[2mm]
       z_t+ (p(\omega_x+\bar v+\hat v)-p(\bar{v}))_x
           +\displaystyle
              \frac{\alpha}{(1+t)^\lambda} z=-\bar u_t,\\[2mm]
              \omega(0,t)=0, \qquad z(0,t)=0.
 \end{array}
        \right.
\end{equation}
with initial data
\begin{equation}\label{1.19idata}
  (\omega,z)\mid_{t=0}=(\omega_0,z_0)(x),
\end{equation}
where
\begin{equation}\notag
\left\{\begin{array}{l}
  \ds \omega_0(x)
    =-\int_x^{\infty}  (v_0(y)-\bar v_0(y)-\hat{v}(y,0))dy,\\
   \ds z_0(x)
       =u_0(x)-\bar u_0(x)-\hat{u}(x,0).
\end{array}
\right.
\end{equation}
Rewrite  \eqref{1.19} and \eqref{1.19idata} as
\begin{equation}\label{1.20}
\left\{\begin{array}{l}
       \omega_{tt}+ (p'(\bar{v})\omega_x)_x
           +\displaystyle
              \frac{\alpha}{(1+t)^\lambda} \omega_t=F,\qquad (x,t)\in \mathbb{R}^+\times\mathbb{R}^+,\\[2mm]
              \omega(0,t)=0, \qquad \omega_t(0,t)=0,
 \end{array}
        \right.
\end{equation}
with initial data
\begin{equation}\label{k1.20}
  (\omega,\omega_t)\mid_{t=0}=(\omega_0,z_0)(x),
\end{equation}
where
\begin{equation}\label{1.21}
  F
   =
    \frac{1}{\alpha}(1+t)^\lambda p(\bar v)_{xt}
      +
       \frac{\lambda}{\alpha} { (1+t)^{\lambda-1} } { p(\bar v)_x}
          -
            (p(\omega_x+\bar v+\hat v)-p(\bar{v})-p'(\bar v)\omega_x)_x.
\end{equation}

%%%%%%%%%%%%%%%%%%%%%%%%%%%%%%%%%%%%%%%%%%%%%%%%%%%%%%%%%%%%%%%%%%%%%%%%%%%%%%%%%%%%%%%%%%%%%% 主要定理

\begin{theorem}\label{Thm 1} {\bf (Dirichlet boundary for $0\leq \lambda<\frac{3}{5}$).}
For $\alpha>0$, suppose that $v_0(x)-v_+\in L^1(\mathbb{R}^+)$, if we assume further that
both $\delta=\|  v_0-v_+\|_{L^1(\mathbb{R}^+)}+|u_+|+
\|V_0\|_{H^5(\mathbb{R})}+\|Z_0\|_{H^4(\mathbb{R})}$ and $\|\omega_0\|_{H^3(\mathbb{R}^+)}+\|z_0\|_{H^2(\mathbb{R}^+)}$ are sufficiently small.  Then,
there exists a unique time-global solution of the initial-boundary value problem \eqref{1.20}-\eqref{k1.20} satisfying
\begin{equation}\notag
  \omega\in C^{k}((0,\infty),H^{3-k}(\mathbb{R}^+)),\qquad k=0,1,2,3,
\end{equation}
\begin{equation}\notag
  \omega_t\in C^{k}((0,\infty),H^{2-k}(\mathbb{R}^+)),\qquad k=0,1,2,
\end{equation}
furthermore, we have
\begin{equation}\notag
\begin{split}
&
      \sum_{k=0}^3(1+t)^{(\lambda +1)k }\|\partial_x^k\omega(\cdot,t)\|_{L^2}^2
      +
         \sum_{k=0}^2(1+t)^{(\lambda+1)k +2} \|\partial_x^k\omega_t(\cdot,t)\|_{L^2}^2
          \\
         &
            +
                \int_0^t \bigg[ \sum_{j=1}^3(1+s)^{(\lambda +1)j-1}\|\partial_x^j\omega(\cdot,s)\|_{L^2}^2
                 +
                 \sum_{j=0}^2(1+s)^{(\lambda +1)j+1} \|\partial_x^j\omega_t(\cdot,s)\|_{L^2}^2\bigg]ds\\
                    \leq
                       &
                          C(\|\omega_0\|_{H^3(\mathbb{R}^+)}^2
                          +
                          \|z_0\|_{H^2(\mathbb{R}^+)}^2+\delta),
\end{split}
\end{equation}
where the initial error function $(V_0, Z_0)(x)$ will  be defined in \eqref{a1.19idata2}.
\end{theorem}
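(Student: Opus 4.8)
The plan is to prove Theorem \ref{Thm 1} by the standard scheme of local existence plus a global-in-time a priori estimate closed by a continuity argument, following \cite{Cui-Yin-Zhang-Zhu2016} for the Cauchy problem but with the modifications forced by the boundary at $x=0$ and by the degenerating damping $\frac{\alpha}{(1+t)^\lambda}$. First I would record a local existence result: for small data the quasilinear damped wave problem \eqref{1.20}--\eqref{k1.20} has a unique solution on a short interval $[0,T_0]$ in the regularity class stated in the theorem, obtained by a routine iteration/fixed-point argument together with the compatibility conditions coming from $\omega(0,t)=\omega_t(0,t)=0$; this part I would only sketch. The core is then the a priori estimate: setting
\[
N(T)^2=\sup_{0\le t\le T}\Big[\sum_{k=0}^{3}(1+t)^{(\lambda+1)k}\|\partial_x^k\omega(\cdot,t)\|_{L^2}^2+\sum_{k=0}^{2}(1+t)^{(\lambda+1)k+2}\|\partial_x^k\omega_t(\cdot,t)\|_{L^2}^2\Big]
\]
and assuming a priori that $N(T)$ is small — the diffusion wave $(\bar v,\bar u)$ being controlled by the smallness of $\delta$ through Section \ref{S2.2} — I aim to prove the displayed inequality of the theorem, which improves the a priori bound ($\delta$ enters only linearly, through the source term) and hence extends the solution to $[0,\infty)$.

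The a priori estimate would be built up hierarchically for $k=0,1,2,3$. At level $k$ I multiply the appropriate spatial derivative of $(\ref{1.20})_1$ by $\partial_x^k\omega_t$ and by $\partial_x^k\omega$, integrate over $\mathbb{R}^+$, and use $-p'(\bar v)>0$ to extract the dissipation $\frac{\alpha}{(1+t)^\lambda}\|\partial_x^k\omega_t\|^2$ together with $\|\partial_x^{k+1}\omega\|^2$. The boundary terms at $x=0$ either vanish outright — this is precisely why one works with $\omega$ and $\omega_t$, both satisfying the homogeneous Dirichlet condition, so that $\omega(0,t)=\omega_t(0,t)=\omega_{tt}(0,t)=\cdots\equiv0$ — or, for the top-order estimates, are reduced to lower-order boundary quantities by using \eqref{1.20} and its $t$-derivatives evaluated at $x=0$ (for instance $(p'(\bar v)\omega_x)_x(0,t)=F(0,t)$), together with the trace inequality and interpolation against the interior norms. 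I then multiply each resulting differential inequality by a time weight of the form $(1+t)^{(\lambda+1)k+\vartheta}$ and integrate in $t$: here the key structural fact is $\int_0^t(1+s)^{-\lambda}\,ds\sim(1+t)^{1-\lambda}$, so that the degenerating damping still yields algebraic decay, and one must choose the weight exponents $\vartheta$ so that the terms produced by differentiating the weight and the cross terms at level $k$ are dominated by the dissipation harvested at level $k-1$. Summing over $k$ gives the desired inequality up to the contribution of the source term.

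It remains to estimate $F=F_1+F_2+F_3$ with $F_1=\frac1\alpha(1+t)^\lambda p(\bar v)_{xt}$, $F_2=\frac\lambda\alpha(1+t)^{\lambda-1}p(\bar v)_x$ and $F_3=-\big(p(\omega_x+\bar v+\hat v)-p(\bar v)-p'(\bar v)\omega_x\big)_x$. The term $F_3$ is quadratic in $\omega_x$ up to contributions of $\hat v$, and by \eqref{1.10}--\eqref{1.11} together with the super-polynomial decay of $\beta(t),B(t)$ for $0\le\lambda<1$ those contributions decay faster than any power of $1+t$; hence by Sobolev embedding and smallness of $N(T)$, $F_3$ is absorbed into the left-hand side. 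For $F_1$ and $F_2$ I would insert the dissipative estimates for $\bar v-v_+$ and its $x$- and $t$-derivatives established in Section \ref{S2.2} and check that every weighted space-time integral $\int_0^t(1+s)^{\theta}\|\partial_x^j F_{1}(\cdot,s)\|_{L^2}^2\,ds$ and $\int_0^t(1+s)^{\theta}\|\partial_x^j F_{2}(\cdot,s)\|_{L^2}^2\,ds$ is bounded uniformly in $t$ by $C\delta$. This is exactly where the restriction $0\le\lambda<\frac35$ enters: the slowest-decaying piece of the source is time-integrable against the relevant weight if and only if $\lambda<\frac35$, the value $\lambda=\frac35$ being the borderline case (the discrepancy with the threshold $\lambda=\frac17$ of the Cauchy problem reflecting the slower decay of the Dirichlet diffusion wave, which has no self-similar structure). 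This balancing of the decay rates of the nonlinear diffusion wave against the weights dictated by the degenerate damping is the main obstacle; once it is done the continuity argument closes in the usual way, the solution is global and satisfies the stated bounds, and the claimed time regularity follows by reading off $\partial_t^k\omega$ from the equation.
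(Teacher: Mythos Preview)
Your proposal is correct and follows essentially the same approach as the paper: local existence (Proposition \ref{prop 3}), an $L^\infty$-type a priori assumption \eqref{xian}, then weighted energy estimates obtained by multiplying \eqref{1.20} and its $x$-derivatives by $(1+t)^{\beta}\partial_x^k\omega$ and $(1+t)^{\beta+\lambda}\partial_x^k\omega_t$, combined hierarchically (Lemmas \ref{Lemma 1}--\ref{Lemma 6}) using the decay of $(\bar v,\bar u)$ from Proposition \ref{Prop a1} and the super-polynomial decay of $\hat v$ from Proposition \ref{Prop a2}. The one point where the paper is slightly more specific than your sketch is the base step: the threshold $\lambda<\tfrac35$ arises not simply from integrability of $\|F_2\|_{L^2}^2$ but from the Young-inequality splitting of $\int(1+t)^{\beta}F_2\,\omega\,dx$ in \eqref{3.6}, which forces the simultaneous constraints $\kappa>1$ (for Gronwall on $\|\omega\|^2$) and $2\beta+\tfrac{\lambda}{2}+\kappa-\tfrac72<-1$, compatible with $\beta=\lambda$ exactly when $\lambda<\tfrac35$.
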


\begin{theorem}\label{Thm 1-0} {\bf (Dirichlet boundary for $\frac{3}{5}<\lambda<1$).}
For $\alpha>0$, suppose   $v_0(x)-v_+\in L^1(\mathbb{R}^+)$, if we assume further that
both $\delta=\|   v_0-v_+\|_{L^1(\mathbb{R}^+)}+|u_+|+
\|V_0\|_{H^5(\mathbb{R})}+\|Z_0\|_{H^4(\mathbb{R})}$ and $\|\omega_0\|_{H^3(\mathbb{R}^+)}+\|z_0\|_{H^2(\mathbb{R}^+)}$ are sufficiently small.  Then,
there exists a unique time-global solution of the initial-boundary value problem \eqref{1.20}-\eqref{k1.20} satisfying
\begin{equation}\notag
  \omega\in C^{k}((0,\infty),H^{3-k}(\mathbb{R}^+)),\qquad k=0,1,2,3,
\end{equation}
\begin{equation}\notag
  \omega_t\in C^{k}((0,\infty),H^{2-k}(\mathbb{R}^+)),\qquad k=0,1,2,
\end{equation}
furthermore, we have
\begin{equation}\notag
\begin{split}
     &
      \sum_{k=0}^3(1+t)^{(\lambda +1)k +\frac{3}{2}-\frac{5\lambda}{2}}
      \|\partial_x^k\omega(\cdot,t)\|_{L^2}^2
      +
         \sum_{k=0}^2(1+t)^{(\lambda+1)k +\frac{7}{2}-\frac{5\lambda}{2}} \|\partial_x^k\omega_t(\cdot,t)\|_{L^2}^2\\
                 \leq
                       &
                          C(\|\omega_0\|_{H^3}^2+\|z_0\|_{H^2}^2+\delta),
\end{split}
\end{equation}
and for any $\beta\in(\frac{3}{2}-\frac{3\lambda}{2},\lambda),$ we
have
\begin{equation}\notag
\begin{split}
         &
                \int_0^t \bigg[ \sum_{j=0}^3(1+s)^{(\lambda +1)(j-1)+\beta}\|\partial_x^j\omega(\cdot,s)\|_{L^2}^2
                 +
                 \sum_{j=0}^2(1+s)^{(\lambda +1)j+\beta-\lambda+1} \|\partial_x^j\omega_t(\cdot,s)\|_{L^2}^2\bigg]ds\\
                    \leq
                       &
                       C(1+t)^{\beta+\frac{3\lambda}{2}-\frac{3}{2}}
                           (\|\omega_0\|_{H^3}^2+\|z_0\|_{H^2}^2+\delta).
\end{split}
\end{equation}
\end{theorem}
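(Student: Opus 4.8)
The plan is to prove Theorem~\ref{Thm 1-0} by combining the dissipative estimates of the nonlinear diffusion waves established in Section~\ref{S2.2} with a weighted energy method for the reformulated second‑order problem \eqref{1.20}--\eqref{k1.20}, and then to close everything by a local existence plus continuation argument. All the decay structure is governed by the inhomogeneity $F$ in \eqref{1.21}: using the estimates of $(\bar v,\bar u)$ from Section~\ref{S2.2} together with the explicit formulas \eqref{1.10}--\eqref{1.11} for $(\hat v,\hat u)$, one bounds $\|\partial_x^k F(\cdot,t)\|_{L^2}$ for $k=0,\dots,3$ (and the trace $|F(0,t)|$) by negative powers of $1+t$. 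In the range $\frac35<\lambda<1$ these powers are smaller than in the Cauchy problem of \cite{Cui-Yin-Zhang-Zhu2016}, and $\lambda=\frac35$ is exactly the value at which $F$ ceases to be time‑integrable against the weights of Theorem~\ref{Thm 1}; this is what forces the shifted exponents $\tfrac32-\tfrac{5\lambda}{2}$ and the slowly growing dissipation factor $(1+t)^{\beta+\frac{3\lambda}{2}-\frac32}$.

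For the a priori estimate, assume a solution on $[0,T]$ with $\sup_{[0,T]}N(t)$ small, where $N(t)^2$ denotes the weighted sum $\sum_{k=0}^3(1+t)^{(\lambda+1)k+\frac32-\frac{5\lambda}{2}}\|\partial_x^k\omega\|_{L^2}^2+\sum_{k=0}^2(1+t)^{(\lambda+1)k+\frac72-\frac{5\lambda}{2}}\|\partial_x^k\omega_t\|_{L^2}^2$. For each $k=0,1,2,3$ I would apply $\partial_x^k$ to \eqref{1.20}, multiply by $(1+t)^{a_k}\partial_x^k\omega_t$ and by $(1+t)^{b_k}\partial_x^k\omega$ with $a_k,b_k$ matched to the weights above, integrate over $\mathbb{R}^+$, and sum. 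The homogeneous conditions $\omega(0,t)=\omega_t(0,t)=0$ and their consequences obtained from the equation at $x=0$ (e.g.\ $\omega_{tt}(0,t)=0$, hence $(p'(\bar v)\omega_x)_x(0,t)=F(0,t)$, and its $t$‑derivatives) are used to remove or reduce the boundary terms produced by the integrations by parts; any residual boundary contribution such as $F(0,t)\,\partial_x\omega_t(0,t)$ is controlled by the one‑dimensional Sobolev inequality $\|g\|_{L^\infty}^2\le C\|g\|_{L^2}\|g_x\|_{L^2}$, the decay of $F$, and Young's inequality. The damping $\frac{\alpha}{(1+t)^\lambda}\omega_t$ produces the dissipation $\sim(1+t)^{a_k-\lambda}\|\partial_x^k\omega_t\|_{L^2}^2$ and, against $\partial_x^k\omega$, a term $\sim(1+t)^{b_k-\lambda}\|\partial_x^k\omega\|_{L^2}^2$, while the variable‑coefficient term $(p'(\bar v)\omega_x)_x$ supplies the $\|\partial_x^{k+1}\omega\|_{L^2}^2$‑dissipation modulo lower order terms carrying the decaying factor $\partial_x\bar v$. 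Crucially, the terms of order $(1+t)^{a_k-1}$ generated by differentiating the time weights are absorbed into the $(1+t)^{a_k-\lambda}$‑dissipation because $\lambda<1$; and the genuinely nonlinear part of $F$, namely $(p(\omega_x+\bar v+\hat v)-p(\bar v)-p'(\bar v)\omega_x)_x$, is quadratic in $(\partial_x\omega,\partial_x\bar v,\partial_x\hat v)$ and is dominated using the same Sobolev inequality and the a priori smallness of $N$.

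Inserting into the lower‑order estimate the free weight $\beta$ in the multiplier $(1+t)^{\beta}\omega$ and collecting all orders, one obtains a differential inequality of the type $\frac{d}{dt}\mathcal E(t)+\mathcal D(t)\le C\delta(1+t)^{-\gamma}\bigl(1+\mathcal E(t)\bigr)$, with $\mathcal E$ equivalent to $N(t)^2$ and $\mathcal D$ the weighted dissipation; integrating it yields the pointwise‑in‑time bound of the theorem, and integrating $\mathcal D$ (after the pointwise decay of $\|\partial_x^j\omega\|_{L^2}^2$ has been fed back in) yields the second bound, the growth factor $(1+t)^{\beta+\frac{3\lambda}{2}-\frac32}$ being precisely what the slowly decaying $F$ permits — indeed this is the regime singled out by $\beta>\frac32-\frac{3\lambda}{2}$, while $\beta<\lambda$ is what lets the time‑dependent dissipation absorb the lower‑order terms produced by the $(1+t)^{\beta}\omega$ multiplier. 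Local existence in the stated regularity class follows from standard theory for the linear damped wave equation with smooth coefficient $p'(\bar v)$ and compatible data, and the usual continuation argument together with the uniform a priori bound promotes it to a global solution. I expect the principal obstacle to be not the energy scheme itself but the simultaneous weight bookkeeping: one must pick the exponents so that (i) the weight‑derivative terms are absorbed by the time‑dependent dissipation (needing $\lambda<1$), (ii) the slowly decaying $F$ is integrable against those weights — the requirement that degrades the rates past $\lambda=\frac35$ and produces the $\tfrac32-\tfrac{5\lambda}{2}$ shift — and (iii) the low‑order $\|\omega\|_{L^2}^2$ term, which picks up an unfavorable contribution from the damping‑times‑$\omega$ multiplier, is controlled via the first‑order dissipation; this sits on top of the separate and, I think, genuinely delicate task carried out in Section~\ref{S2.2} of extracting the diffusion‑wave decay indirectly through the extension to $\mathbb{R}$ and the associated linearized problem.
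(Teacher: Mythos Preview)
Your proposal is correct and follows essentially the same weighted–energy scheme as the paper: multiply \eqref{1.20} and its derivatives by $(1+t)^{\beta}\partial_x^k\omega$ and $(1+t)^{\beta+\lambda}\partial_x^k\omega_t$, combine, and for $\tfrac35<\lambda<1$ choose $\beta\in(\tfrac32-\tfrac{3\lambda}{2},\lambda)$ so that the $\|\omega\|^2$–dissipation has the right sign while the slow forcing from $F$ integrates to the growth factor $(1+t)^{\beta+\frac{3\lambda}{2}-\frac32}$; the $\tfrac32-\tfrac{5\lambda}{2}$ shift then drops out after dividing through.

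Two small points where the paper is organized slightly differently. First, the paper's \textit{a priori} assumption is the pointwise one
\[
\sup_{0<t<T}\bigl\{\|\omega_x\|_{L^\infty}+(1+t)\|\omega_{xt}\|_{L^\infty}+(1+t)^{\frac{\lambda+1}{2}}\|\omega_{xx}\|_{L^\infty}\bigr\}\le\epsilon,
\]
rather than the full weighted $L^2$ sum $N(t)$; this is equivalent via Sobolev but keeps the nonlinear bookkeeping lighter. Second, the paper does not need to estimate residual boundary traces like $F(0,t)\,\omega_{xt}(0,t)$: from $u(0,t)=0$ and $\eqref{1.3}_2$ one gets $v_x(0,t)=0$, and the even extension gives $\bar v_x(0,t)=0$, so in fact $\omega_{xx}(0,t)=\omega_{txx}(0,t)=0$ and \emph{all} the boundary terms in the higher-order integrations by parts vanish outright (this is the content of \eqref{bou}). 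Your route through $F(0,t)$ would also work, but is unnecessary here. Finally, note that in your schematic inequality $\tfrac{d}{dt}\mathcal E+\mathcal D\le C\delta(1+t)^{-\gamma}(1+\mathcal E)$ the Gronwall coefficient and the pure forcing actually carry different exponents (the former is $(1+t)^{\lambda-2}$, integrable, the latter is $(1+t)^{\beta+\frac{3\lambda}{2}-\frac52}$, not); keeping them separate is what makes the argument close.
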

\begin{theorem}\label{Thm 1-1} {\bf (Dirichlet boundary for $\lambda=\frac{3}{5}$).}
For $\alpha>0$, suppose   $v_0(x)-v_+\in L^1(\mathbb{R}^+)$, if we assume further that
both $\delta=\|   v_0-v_+\|_{L^1(\mathbb{R}^+)}+|u_+|+
\|V_0\|_{H^5(\mathbb{R})}+\|Z_0\|_{H^4(\mathbb{R})}$ and $\|\omega_0\|_{H^3(\mathbb{R}^+)}+\|z_0\|_{H^2(\mathbb{R}^+)}$ are sufficiently small.  Then,
there exists a unique time-global solution of the initial-boundary value problem \eqref{1.20}-\eqref{k1.20} satisfying
\begin{equation}\notag
  \omega\in C^{k}((0,\infty),H^{3-k}(\mathbb{R}^+)),\qquad k=0,1,2,3,
\end{equation}
\begin{equation}\notag
  \omega_t\in C^{k}((0,\infty),H^{2-k}(\mathbb{R}^+)),\qquad k=0,1,2,
\end{equation}
furthermore, for any sufficiently small $\varepsilon>0$ we have
\begin{equation}\notag
\begin{split}
     &
      \sum_{k=0}^3(1+t)^{\frac{8k}{5} }
      \|\partial_x^k\omega(\cdot,t)\|_{L^2}^2
      +
         \sum_{k=0}^2(1+t)^{\frac{8k}{5}+2} \|\partial_x^k\omega_t(\cdot,t)\|_{L^2}^2
          \\
         &
            +
                \int_0^t \bigg[ \sum_{j=1}^3(1+s)^{\frac{8j}{5}-1}\|\partial_x^j\omega(\cdot,s)\|_{L^2}^2
                 +
                 \sum_{j=0}^2(1+s)^{\frac{8j}{5}+1} \|\partial_x^j\omega_t(\cdot,s)\|_{L^2}^2\bigg]ds
         \\
                 \leq
                       &
                          C(1+t)^\varepsilon(\|\omega_0\|_{H^3}^2+\|z_0\|_{H^2}^2+\delta).
\end{split}
\end{equation}
\end{theorem}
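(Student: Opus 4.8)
The plan is to run the continuity argument and weighted energy scheme already used for Theorems~\ref{Thm 1} and \ref{Thm 1-0}, treating $\la=\frac35$ as the borderline case of the subcritical regime, where a single weighted time integral generated by the diffusion wave degenerates to $\int_0^t(1+s)^{-1}ds$; the resulting logarithmic growth is precisely what the factor $(1+t)^{\va}$ on the right-hand side absorbs. First I would record local existence for \eqref{1.20}--\eqref{k1.20} by the standard iteration for this quasilinear wave equation (the compatibility conditions $\om_0(0)=z_0(0)=0$ being built into the reformulation \eqref{1.13}--\eqref{1.18}); together with a continuation argument it suffices to close an a priori estimate. Fixing a small $\va>0$ and setting
$$
N_\va(T)^2:=\sup_{0\le t\le T}(1+t)^{-\va}\Bigl[\sum_{k=0}^3(1+t)^{\frac{8k}{5}}\|\pa_x^k\om(\cdot,t)\|_{L^2}^2+\sum_{k=0}^2(1+t)^{\frac{8k}{5}+2}\|\pa_x^k\om_t(\cdot,t)\|_{L^2}^2\Bigr],
$$
and noting that $\la+1=\frac85$ at $\la=\frac35$, these are exactly the weights of Theorem~\ref{Thm 1}, relaxed by $(1+t)^{\va}$; the goal, assuming $N_\va(T)$ and $\delta$ small, is $N_\va(T)^2\le C_\va(\|\om_0\|_{H^3}^2+\|z_0\|_{H^2}^2+\delta)$, from which the theorem follows.

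\emph{The weighted hierarchy.} For $k=0,1,2,3$ I would apply $\pa_x^k$ to $(\ref{1.20})_1$, multiply by $(1+t)^{\,p_k}\pa_x^k\om_t$ and, with a strictly smaller power, by $(1+t)^{\,p_k-1}\pa_x^k\om$ in Nishihara's fashion, add the two identities, integrate over $\R^+\times[0,t]$ and integrate by parts in $x$. The powers $p_k$ are chosen as in the proof of Theorem~\ref{Thm 1}, now lowered by $\va$, so that after the whole hierarchy one recovers exactly the weights appearing in $N_\va(T)$: the principal part produces the energy terms together with their dissipative time integrals, and the lower-order remainders are absorbed by the preceding level of the hierarchy or by the damping $\frac{\al}{(1+t)^{3/5}}\pa_x^k\om_t$ combined with the smallness of $N_\va(T)$. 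The boundary contributions at $x=0$ are handled as in the Dirichlet analysis of \cite{Nishihara-Yang1999}: those carrying an undifferentiated $\om$ or $\om_t$ vanish since $\om(0,t)=\om_t(0,t)=0$, while the higher traces are rewritten by evaluating \eqref{1.20} and $(\ref{1.19})_2$ at $x=0$ and then controlled by the diffusion-wave data and a small fraction of the dissipation.

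\emph{The forcing $F$ and the $\va$-loss.} With $F$ as in \eqref{1.21}, the linear pieces $\frac1\al(1+t)^\la p(\bar v)_{xt}$ and $\frac\la\al(1+t)^{\la-1}p(\bar v)_x$ are, after Cauchy--Schwarz against the multiplier, bounded by weighted time integrals of $\|\pa_x^{k+1}p(\bar v)\|_{L^2}^2$ and $\|\pa_x^{k+1}p(\bar v)_t\|_{L^2}^2$, into which I substitute the decay rates of $(\bar v,\bar u)$ from Section~\ref{S2.2}; the nonlinear remainder $\bigl(p(\om_x+\bar v+\hat v)-p(\bar v)-p'(\bar v)\om_x\bigr)_x$, being $\pa_x$ of a quantity of size $O(|\om_x|^2+|\om_x||\hat v|+|\hat v|^2)$, is dominated either by $N_\va(T)$ times the dissipation already on the left or by the super-algebraically small factors $B(t),\beta(t)$ carried by $(\hat v,\hat u)$ (recall $\la<1$). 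The one genuine obstacle is quantitative and is the whole reason for the exceptional status of $\la=\frac35$: for $\la<\frac35$ the worst of these integrals at the top level behaves like $\int_0^t(1+s)^{-1-c(\frac35-\la)}\,ds$ (equivalently the weight window $\be\in(\frac32-\frac{3\la}{2},\la)$ of Theorem~\ref{Thm 1-0} is nonempty), whereas at $\la=\frac35$ it collapses to $\int_0^t(1+s)^{-1}\,ds\sim\log(1+t)$. Running the entire hierarchy with every target exponent uniformly lowered by $\va$ turns this into $\int_0^t(1+s)^{-1-\va}\,ds\le C\va^{-1}$; lowering the weights only eases the nonlinear estimates and does not disturb the structure of the hierarchy, and since the quadratic terms carry the small factor $N_\va(T)$ they are absorbed into the left-hand side. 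This yields $N_\va(T)^2\le C_\va(\|\om_0\|_{H^3}^2+\|z_0\|_{H^2}^2+\delta)$, which is the stated estimate because $\log(1+t)\le C_\va(1+t)^\va$.
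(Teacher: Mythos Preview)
Your proposal is correct and follows the same approach the paper takes: the paper itself gives no separate argument for $\lambda=\tfrac35$, stating only that ``the proof of Theorem~\ref{Thm 1-1} is similar'' to the cases $\lambda<\tfrac35$ and $\lambda>\tfrac35$ treated in Lemmas~\ref{Lemma 1}--\ref{Lemma 6}. You have correctly identified the mechanism behind the $\varepsilon$-loss: in the lowest-order estimate (the analogue of Lemma~\ref{Lemma 1}) the choice $\beta=\lambda$ together with the diffusion-wave forcing produces a contribution $C\delta(1+t)^{\frac{5\lambda}{2}-\frac{5}{2}}$, which at $\lambda=\tfrac35$ is exactly $(1+t)^{-1}$ and integrates to $\log(1+t)\le C_\varepsilon(1+t)^\varepsilon$; equivalently, the admissible window $1<\kappa<\tfrac{5}{2}-\tfrac{5\lambda}{2}$ of Case~1 and the window $\tfrac{3}{2}-\tfrac{3\lambda}{2}<\beta<\lambda$ of Case~2 both collapse. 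One small remark: the boundary bookkeeping is even simpler than you indicate, since in the Dirichlet setting the paper records $\omega(0,t)=\omega_{xx}(0,t)=\omega_t(0,t)=\omega_{txx}(0,t)=0$ (see \eqref{bou}), so the boundary traces vanish outright at each level of the hierarchy.
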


Notice that $\omega_{x}=v-\bar v-\hat v$, $z=u-\bar u-\hat u$, and use
Proposition \ref{Prop a2} in the next subsection and Sobolev inequality, we
immediately obtain the following convergence rates.
\begin{corollary}\label{corollary 1}
Under the assumptions of Theorem \ref{Thm 1}-\ref{Thm 1-1},   the system
\eqref{1.3}-\eqref{bound1} possesses a uniquely global solution
$(v,u)(x,t)$ satisfying
\begin{equation}\notag
       \| (v-\bar v)(\cdot,t)\|_{L^\infty(\mathbb{R}^+)}
       \leq\left\{
\begin{array}{l}
    C(1+t)^{-\frac{3(\lambda+1)}{4}},\qquad\qquad \quad   0\leq\lambda<\frac{3}{5},\\[2mm]
    C(1+t)^{-\frac{ 6}{5}+\varepsilon},\qquad\qquad\quad\ \       \lambda=\frac{3}{5},\\[2mm]
     C(1+t)^{\frac{\lambda-3}{2}}, \qquad\qquad \quad  \quad    \frac{3}{5}<\lambda<1,
   \end{array}
   \right.
\end{equation}
and
  \begin{equation}\notag
       \| (u-\bar u)(\cdot,t)\|_{L^\infty(\mathbb{R}^+)}\leq\left\{
\begin{array}{l}
     C(1+t)^{-\frac{\lambda+5}{4}},\qquad\qquad \quad   0\leq\lambda<\frac{3}{5},\\[2mm]
    C(1+t)^{-\frac{ 7}{5}+\varepsilon},\qquad\qquad\quad\ \    \lambda=\frac{3}{5},\\[2mm]
     C(1+t)^{\lambda-{2}}, \qquad\qquad \quad \quad    \frac{3}{5}<\lambda<1.
   \end{array}
   \right.
\end{equation}
\end{corollary}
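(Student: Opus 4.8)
The plan is to read Corollary \ref{corollary 1} off the weighted energy estimates of Theorems \ref{Thm 1}--\ref{Thm 1-1} by means of the one-dimensional Sobolev inequality, after first expressing the quantities of interest through $\omega$. Since $\omega_x=v-\bar v-\hat v$ and, by $(\ref{1.19})_1$, $\omega_t=z=u-\bar u-\hat u$, we have
\begin{equation}\notag
v-\bar v=\omega_x+\hat v,\qquad u-\bar u=\omega_t+\hat u .
\end{equation}
On $\mathbb{R}^+$ one has $\|g\|_{L^\infty}^2\le 2\|g\|_{L^2}\|g_x\|_{L^2}$ for every $g\in H^1(\mathbb{R}^+)$; applying this to $g=\omega_x$ and $g=\omega_t$ (legitimate since $\omega(\cdot,t)\in H^3$ and $\omega_t(\cdot,t)\in H^2$) reduces the estimate of $\|v-\bar v\|_{L^\infty}$ and $\|u-\bar u\|_{L^\infty}$ to that of $\|\omega_x\|_{L^2},\|\omega_{xx}\|_{L^2},\|\omega_t\|_{L^2},\|\omega_{tx}\|_{L^2}$ and of the correction profiles $\hat v,\hat u$.

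First I would dispose of $\hat v$ and $\hat u$: by \eqref{1.10}--\eqref{1.11} these are fixed compactly supported functions of $x$ multiplied respectively by $B(t)$ and $\beta(t)$, and for $0\le\lambda<1$ both $\beta(t)=e^{-\frac{\alpha}{1-\lambda}[(1+t)^{1-\lambda}-1]}$ and $B(t)=-\int_t^\infty\beta$ decay faster than any negative power of $1+t$ (these decay properties, together with the decay of $(\bar v,\bar u)$, are collected in Proposition \ref{Prop a2}). Hence $\|\hat v(\cdot,t)\|_{L^\infty}+\|\hat u(\cdot,t)\|_{L^\infty}\le C|u_+|(1+t)^{-N}$ for every $N$, which is dominated by all the polynomial rates claimed for $v-\bar v$ and $u-\bar u$; these contributions are absorbed into $C$, so it suffices to bound $\|\omega_x\|_{L^\infty}$ and $\|\omega_t\|_{L^\infty}$.

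It remains to insert the three sets of weighted bounds, writing $\mathcal D:=\|\omega_0\|_{H^3}^2+\|z_0\|_{H^2}^2+\delta$. For $0\le\lambda<\tfrac35$, Theorem \ref{Thm 1} gives $\|\partial_x^k\omega\|_{L^2}^2\le C(1+t)^{-(\lambda+1)k}\mathcal D$ and $\|\partial_x^k\omega_t\|_{L^2}^2\le C(1+t)^{-(\lambda+1)k-2}\mathcal D$, whence, halving the sum of the two relevant exponents, $\|\omega_x\|_{L^\infty}\le C(1+t)^{-\frac{3(\lambda+1)}{4}}$ and $\|\omega_t\|_{L^\infty}\le C(1+t)^{-\frac{\lambda+5}{4}}$. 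For $\tfrac35<\lambda<1$, the $(1+t)^{\frac32-\frac{5\lambda}{2}}$-shifted bounds of Theorem \ref{Thm 1-0} give $\|\partial_x\omega\|_{L^2}^2\le C(1+t)^{\frac{3\lambda}{2}-\frac52}\mathcal D$, $\|\partial_x^2\omega\|_{L^2}^2\le C(1+t)^{\frac{\lambda}{2}-\frac72}\mathcal D$, $\|\omega_t\|_{L^2}^2\le C(1+t)^{\frac{5\lambda}{2}-\frac72}\mathcal D$, $\|\partial_x\omega_t\|_{L^2}^2\le C(1+t)^{\frac{3\lambda}{2}-\frac92}\mathcal D$, hence $\|\omega_x\|_{L^\infty}\le C(1+t)^{\frac14[(\frac{3\lambda}{2}-\frac52)+(\frac{\lambda}{2}-\frac72)]}=C(1+t)^{\frac{\lambda-3}{2}}$ and $\|\omega_t\|_{L^\infty}\le C(1+t)^{\frac14[(\frac{5\lambda}{2}-\frac72)+(\frac{3\lambda}{2}-\frac92)]}=C(1+t)^{\lambda-2}$. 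For the borderline $\lambda=\tfrac35$, Theorem \ref{Thm 1-1} gives $\|\partial_x^k\omega\|_{L^2}^2\le C(1+t)^{\varepsilon-\frac{8k}{5}}\mathcal D$ and $\|\partial_x^k\omega_t\|_{L^2}^2\le C(1+t)^{\varepsilon-\frac{8k}{5}-2}\mathcal D$, and the same interpolation yields $\|\omega_x\|_{L^\infty}\le C(1+t)^{\frac{\varepsilon}{2}-\frac65}$, $\|\omega_t\|_{L^\infty}\le C(1+t)^{\frac{\varepsilon}{2}-\frac75}$; relabelling $\varepsilon/2$ as $\varepsilon$ gives the stated rates. (As a consistency check, all three regimes produce the exponents $-\frac65$ and $-\frac75$ in the limit $\lambda\to\frac35$.) Combining the three cases with the bound on $\hat v,\hat u$ proves the corollary.

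There is no genuine obstacle here: the proof is a routine combination of Proposition \ref{Prop a2}, Theorems \ref{Thm 1}--\ref{Thm 1-1}, and the Sobolev inequality. The only points requiring a little care are the bookkeeping of the weighted exponents — checking that the quarter-sums reproduce exactly $-\frac{3(\lambda+1)}{4}$, $-\frac{\lambda+5}{4}$, $\frac{\lambda-3}{2}$, $\lambda-2$, $-\frac65$, $-\frac75$ — and the verification that $\hat v,\hat u$ (and the $\varepsilon$-loss at $\lambda=\frac35$) are genuinely subdominant, which holds by the super-polynomial decay of $\beta(t)$ and the freedom to rescale $\varepsilon$.
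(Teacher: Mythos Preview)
Your proof is correct and follows exactly the route indicated in the paper: the paper simply remarks that since $\omega_x=v-\bar v-\hat v$ and $z=u-\bar u-\hat u$, one applies Proposition~\ref{Prop a2} for the correction terms and the Sobolev inequality to the weighted $L^2$ bounds of Theorems~\ref{Thm 1}--\ref{Thm 1-1}. You have carried out the exponent bookkeeping in full detail, and all of it checks; there is nothing to add.
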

\begin{remark}\label{remark 1}
 It should be noted that   the cut-off point of the convergence rate in this paper is $\lambda=\frac{3}{5}$, while  the Cauchy problem  in \cite{Cui-Yin-Zhang-Zhu2016} is $\lambda=\frac{1}{7}$. This is caused by the  diffusion wave constructed in this paper is not be self-similar solution. It is worth pointing out that the time-depending damping could reveal more phenomena  about the ware equation.
\end{remark}
\begin{remark}\label{remark 2}
 For the case of $\lambda=0$, the convergence rates shown in Theorem \ref{Thm 1}-\ref{Thm 1-1} and Corollary \ref{corollary 1} are the same as
 all existing convergence rates obtained in the previous works  \cite{Jiang-Zhu2009,Marcati-Mei2000,Marcati-Mei-Rubino2005,Nishihara-Yang1999}.
\end{remark}

\bigbreak

\subsection{Preliminaries}\label{S2.2}

In this subsection, we will establish some fundamental dissipative properties of the solution $(\bar v, \bar u)(x,t)$ to the system \eqref{a1.5}. The equations \eqref{a1.5} can also be written as
\begin{equation}\notag
\left\{\begin{array}{l}
\partial_t  \bar{v}
  +\displaystyle
  \frac{(1+t)^\lambda}{\alpha}
   \partial_{xx} p(\bar{v})=0,\qquad (x,t)\in \mathbb{R}^+\times\mathbb{R}^+,\\[2mm]
     \bar v\mid_{t=0}=\bar v_0(x)\rightarrow v_+,\quad
    \mbox{as}
     \quad
     x\rightarrow  +\infty,\\[2mm]
     \bar{v}_x( 0,t)=0, \qquad \bar{v}(  \infty,t)=v_+.
 \end{array}
        \right.
\end{equation}

 Let $(\bar v_0^\star, \bar u_0^\star)(x)$ denote the even and odd extensions of  $(\bar v_0, \bar u_0)(x)$ in the whole space $\mathbb{R}$, respectively,
 i.e.,
 \begin{equation}\notag
\bar v_0^\star(x)=
\left\{
\begin{array}{l}
\bar  v_0(x),\qquad \ \ \mbox{if} \qquad x\geq0,\\
\bar v_0(-x),\qquad\mbox{if} \qquad x<0,
\end{array}
\right.
\qquad
\bar u_0^\star(x)=
\left\{
\begin{array}{l}
\bar  u_0(x),\qquad\ \  \ \  \mbox{if} \qquad x\geq0,\\
-\bar u_0(-x),\qquad  \mbox{if} \qquad x<0.
\end{array}
\right.
\end{equation}
 Then, we study the properties of $(\bar v,\bar u)(x,t)$ by investigating the following Cauchy problem:
 \begin{equation}\label{a1.7}
\left\{\begin{array}{l}
\partial_t  \bar{v}
  -
   \partial_x \bar{ u}=0,\\[2mm]
      \partial_x  p(\bar{v})
     =\displaystyle
     -\frac{\alpha}{(1+t)^\lambda} \bar{u},\qquad (x,t)\in \mathbb{R}\times\mathbb{R}^+,\\[3mm]
     (\bar v,\bar u)\mid_{t=0}=(\bar v_0^\star,\bar u_0^\star)(x)\rightarrow (v_+,0),\quad
    \mbox{as}
     \quad
     x\rightarrow  \pm\infty.
 \end{array}
        \right.
\end{equation}
Different from the Cauchy problem in \cite{Cui-Yin-Zhang-Zhu2016}, the
initial value problem \eqref{a1.7} does not possess self-similar solution.
Therefore,  we can't directly get the decay rates of $(\bar v,\bar
u)(x,t)$. Therefore, as in \cite{Jiang-Zhu20092}, we study the corresponding linearized
problem of \eqref{a1.7} around $v_+$
 \begin{equation}\label{a1.a7}
\left\{\begin{array}{l}
\partial_t  \tilde{v}
  -
   \partial_x \tilde{ u}=0,\\[2mm]
      p'(v_+) \partial_x \tilde{v}
     =\displaystyle
     -\frac{\alpha}{(1+t)^\lambda} \tilde{u},\qquad (x,t)\in \mathbb{R}\times\mathbb{R}^+,\\[2mm]
     \tilde v(x,0)=v_++\frac{\delta_0(\lambda+1)^\frac{1}{2}}{ (4\kappa\pi)^{\frac{1}{2}} }e^{-\frac{(\lambda+1)x^2}{4\kappa }},\\[2mm]
      \tilde u(x,0)=-\frac{ x}{2 }\frac{\delta_0(\lambda+1)^\frac{3}{2}}{ (4\kappa\pi)^{\frac{1}{2}} }e^{-\frac{(\lambda+1)x^2}{4\kappa }},
 \end{array}
        \right.
\end{equation}
 where $\kappa=-\frac{p'(v_+)}{\alpha}>0$ and $\delta_0=2\int_0^\infty(\bar v_0(y)-v_+)dy$.

 The solution $(\tilde v,\tilde u)(x,t)$ of the Cauchy problem \eqref{a1.a7} can be  written explicitly as
 \begin{equation}\notag
 \left\{
 \begin{array}{l}
 \tilde v(x,t)=v_++\frac{\delta_0(\lambda+1)^\frac{1}{2}}{ (4\kappa\pi)^{\frac{1}{2}} (1+t)^{\frac{\lambda+1}{2}}}e^{-\frac{(\lambda+1)x^2}{4\kappa (1+t)^{\lambda+1}}},\\[4mm]
 \tilde u(x,t)=\displaystyle \kappa(1+t)^\lambda \tilde v_x(x,t),\qquad (x,t)\in \mathbb{R}\times\mathbb{R}^+.
\end{array}
\right.
\end{equation}

By direct calculations, we have the following estimates.
\begin{lemma}\label{Lemma a0.1}
For each $p\in[1,\infty]$, we know
\begin{equation}\notag
\begin{split}
&
       \|\partial_t^l\partial_x^k(\tilde v(t)-\bar v_+)\|_{L^p(\mathbb{R})}
            \leq
                   C|\delta_0| (1+t)^{-\frac{\lambda+1}{2}(1-\frac{1}{p})-\frac{(\lambda+1)k}{2}-l},\qquad k,l=0,1,2,\cdot\cdot\cdot.
\end{split}
\end{equation}
Furthermore, for  each $p\in[1,\infty]$, let
$h(x,t)=-(p'(v_+)-p'(\tilde v))\tilde v_x$, we have
\begin{equation}\notag
\int_{-\infty}^\infty |\partial_t^l\partial_x^k h(x,t)|^2 dx\leq C\delta_0^4 (1+t)^{-\frac{5(\lambda+1)}{2}- (\lambda+1)k -2l}, \qquad\qquad k,l=0,1,2,\cdot\cdot\cdot.
\end{equation}
\end{lemma}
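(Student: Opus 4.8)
The plan is to prove both estimates by exploiting the explicit formula for the linearized solution
\[
\tilde v(x,t)-v_+ = \frac{\delta_0(\lambda+1)^{1/2}}{(4\kappa\pi)^{1/2}(1+t)^{(\lambda+1)/2}}\,e^{-\frac{(\lambda+1)x^2}{4\kappa(1+t)^{\lambda+1}}},
\]
which is, up to the constant $\delta_0$ and the change of time variable $\tau = (1+t)^{\lambda+1}$, exactly a (scaled) heat kernel. First I would record the self-similar structure: writing $G(x,\tau)=(4\kappa\pi\tau)^{-1/2}e^{-x^2/(4\kappa\tau)}$, we have $\tilde v(x,t)-v_+=\delta_0\,(\lambda+1)^{1/2}\,\tau^{1/2}\,G(x,\tau)$ with $\tau=(1+t)^{\lambda+1}$ — actually more simply $\tilde v - v_+ = \delta_0\,G(x,(1+t)^{\lambda+1})$ after absorbing constants. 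Each spatial derivative $\partial_x$ of a Gaussian with variance $\sim\tau$ brings down a factor $\tau^{-1/2}\sim(1+t)^{-(\lambda+1)/2}$ in $L^p$; each time derivative $\partial_t$ acts through $\partial_\tau\cdot\frac{d\tau}{dt}$ with $\frac{d\tau}{dt}=(\lambda+1)(1+t)^\lambda$, and since $\partial_\tau G$ scales like $\tau^{-1}G$ this produces a factor $(1+t)^\lambda\cdot(1+t)^{-(\lambda+1)}=(1+t)^{-1}$ per time derivative. Combining with the standard heat-kernel bound $\|\partial_x^k G(\cdot,\tau)\|_{L^p(\mathbb{R})}\le C\,\tau^{-\frac12(1-\frac1p)-\frac k2}$ gives the first claim; the mixed $\partial_t^l\partial_x^k$ case follows by the same bookkeeping, noting that $\partial_t^l$ applied to $\tilde v - v_+$ is a finite linear combination of $(1+t)^{(\lambda+1)j - l - (\text{lower order in }\lambda)}\,(\partial_\tau^j G)$-type terms, each of which is dominated by the claimed power. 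I would carry this out by induction on $l$ (and separately $k$), or simply by writing the scaling identity $\partial_x^k(\tilde v-v_+)(x,t) = \delta_0(1+t)^{-(\lambda+1)(k+1)/2}\,(\partial_y^k G)(y,1)\big|_{y=x(1+t)^{-(\lambda+1)/2}}$ for the $l=0$ case and differentiating in $t$.

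For the second estimate, I would use $\tilde u = \kappa(1+t)^\lambda\tilde v_x$, so $h = -(p'(v_+)-p'(\tilde v))\tilde v_x$ is a product of the smooth function $p'(v_+)-p'(\tilde v)$, which by Taylor expansion is $O(|\tilde v - v_+|)$, with $\tilde v_x$. Thus pointwise $|h|\lesssim |\tilde v - v_+|\,|\tilde v_x|$, and after $\partial_t^l\partial_x^k$ the Leibniz rule spreads $l+k$ derivatives across the two factors; each derivative landing on $\tilde v - v_+$ or on $\tilde v_x$ contributes the decay rates just established. The key point is the \emph{product} of two Gaussian factors: each carries a prefactor $(1+t)^{-(\lambda+1)/2}$ in $L^\infty$, and one of the two factors, estimated in $L^2$, contributes an \emph{extra} $(1+t)^{-(\lambda+1)/4}$. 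More precisely, $\|h(\cdot,t)\|_{L^2}\le \|\tilde v-v_+\|_{L^\infty}\|\tilde v_x\|_{L^2}\lesssim \delta_0^2(1+t)^{-(\lambda+1)/2}\cdot(1+t)^{-(\lambda+1)/4-(\lambda+1)/2}$; squaring gives $(1+t)^{-(\lambda+1)}\cdot(1+t)^{-3(\lambda+1)/2}=(1+t)^{-5(\lambda+1)/2}$, which is the claimed $l=k=0$ rate with the claimed $\delta_0^4$. For general $k,l$, I distribute $\partial_t^l\partial_x^k$ by Leibniz; a term with $k_1$ $x$-derivatives and $l_1$ $t$-derivatives on the first factor and the rest on the second is bounded in $L^2$ by
\[
\|\partial_t^{l_1}\partial_x^{k_1}(\tilde v-v_+)\|_{L^\infty}\,\|\partial_t^{l-l_1}\partial_x^{k-k_1+1}\tilde v\|_{L^2}
\lesssim \delta_0^2 (1+t)^{-\frac{\lambda+1}{2}-\frac{(\lambda+1)k_1}{2}-l_1}\cdot(1+t)^{-\frac{\lambda+1}{4}-\frac{(\lambda+1)(k-k_1+1)}{2}-(l-l_1)},
\]
and every such product, when squared, is bounded by $\delta_0^4(1+t)^{-\frac{5(\lambda+1)}{2}-(\lambda+1)k-2l}$ (the exponent is independent of the split $(k_1,l_1)$, so no optimization is needed). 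Derivatives of the smooth coefficient $p'(v_+)-p'(\tilde v)$ only produce further factors of derivatives of $\tilde v - v_+$, each with strictly more decay, so they are harmless.

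The main obstacle, such as it is, is purely bookkeeping: one must verify that after applying $\partial_t^l$ to the self-similar profile the ``worst'' term really does decay at rate $(1+t)^{-l}$ per time derivative and not slower — i.e., that no term with fewer than the full complement of $\tau$-derivatives but an extra power of $(1+t)^\lambda$ sneaks in with a better-looking but actually worse exponent. This is handled cleanly by the substitution $\tau=(1+t)^{\lambda+1}$: in the $\tau$ variable $\tilde v - v_+ = \delta_0 G(x,\tau)$ is literally the heat kernel, for which $\|\partial_\tau^j\partial_x^k G(\cdot,\tau)\|_{L^p}\le C\tau^{-\frac12(1-\frac1p)-\frac k2 - j}$ is standard, and then $\partial_t = (\lambda+1)(1+t)^\lambda\partial_\tau$ together with $\tau^{-1}=(1+t)^{-(\lambda+1)}$ converts each $\partial_t$ into a clean factor $(1+t)^{-1}$. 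Once the $L^p$ bounds on $\tilde v - v_+$ are in hand in this form, the $h$-estimate is an immediate consequence of Leibniz, Hölder ($L^\infty\cdot L^2$), and the Taylor bound on $p'$, with the $\delta_0^4$ appearing automatically from the product of two factors each linear in $\delta_0$ (and the coefficient $p'(v_+)-p'(\tilde v)$ contributing one more).
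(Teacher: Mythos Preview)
Your approach is correct and is essentially what the paper does: the authors simply write ``By direct calculations, we have the following estimates'' and give no further details, so your explicit realization of that calculation---writing $\tilde v-v_+=\delta_0\,G(x,\tau)$ for the heat kernel $G$ with $\tau=(1+t)^{\lambda+1}$, invoking the standard scaling $\|\partial_\tau^j\partial_x^k G(\cdot,\tau)\|_{L^p}\le C\tau^{-\frac12(1-\frac1p)-\frac k2-j}$, and converting $\partial_t$-derivatives via Fa\`a di Bruno so that each time derivative yields exactly a factor $(1+t)^{-1}$---is precisely the intended computation. Your treatment of $h$ by Leibniz, the Taylor bound $|p'(\tilde v)-p'(v_+)|\le C|\tilde v-v_+|$, and the $L^\infty\cdot L^2$ splitting is also correct (note that the phrase ``for each $p\in[1,\infty]$'' in the second display of the statement is a leftover; only the $L^2$ bound on $h$ is asserted and used).
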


Combining \eqref{a1.7} and \eqref{a1.a7} leads to
\begin{equation}\label{a1.12}
\left\{\begin{array}{l}
\partial_t  (\bar{v}-\tilde{v})
  -
   \partial_x (\bar{ u}-\tilde{u})=0,\qquad (x,t)\in \mathbb{R}\times\mathbb{R}^+,\\[2mm]
      \partial_x  (p(\bar v)-p(\tilde{v}))
           +\displaystyle
              \frac{\alpha}{(1+t)^\lambda} (\bar{u}-\tilde{u})=(p'(v_+)-p'(\tilde v))\tilde v_x.
 \end{array}
        \right.
\end{equation}
Integrating of $(\ref{a1.12})_1$ with respect to  $x$ and $t$ over $ \mathbb{R}\times(0,t)$ and using \eqref{a1.61}, we  can get
\begin{equation}\notag
\begin{split}
\int_{-\infty}^\infty( \bar v(x ,t)-\tilde v(x,t))dx
&
  =\int_{-\infty}^\infty( \bar v_0^\star(x)-\tilde v (x,0))dx\\
 &
  =2\int_{0}^\infty( \bar v_0(x)-\tilde v(x,0))dx\\
  &
  =2\int_{0}^\infty (\bar v_0-v_+)dx- {\delta_0}\\
  &
   =0.
\end{split}
\end{equation}
Hence we define the new variables
\begin{equation}\label{a1.14}
V(x,t)=\int_{-\infty}^{x} ( \bar v(y,t)-\tilde{v}(y,t))dy,
\end{equation}
and
\begin{equation}\label{a1.15}
Z(x,t)=  \bar u(x,t)-\tilde{u}(x,t).
\end{equation}
By \eqref{a1.12}, we have the reformulated problem
\begin{equation}\label{a1.19}
\left\{\begin{array}{l}
  V_t
  -
   Z=0,
   \qquad \qquad \qquad \qquad  (x,t)\in \mathbb{R}\times\mathbb{R}^+,\\[2mm]
        (p(V_x +\tilde v)-p(\tilde{v}))_x
           +\displaystyle
              \frac{\alpha}{(1+t)^\lambda} Z= (p'(v_+)-p'(\tilde v))\tilde v_x.
 \end{array}
        \right.
\end{equation}
The corresponding  initial data are given by
\begin{equation}\label{a1.19idata}
  (V,Z)\mid_{t=0}=(V_0,Z_0)(x),
\end{equation}
where
\begin{equation}\label{a1.19idata2}
\left\{\begin{array}{l}
  \ds V_0(x)
    =\int_{-\infty}^x ( \bar v_0^\star(y)-\tilde{v}(y,0))dy,\\[3mm]
   \ds Z_0(x)
       = \bar u_0^\star(x)-\tilde{u}(x,0).
\end{array}
\right.
\end{equation}
Rewrite  \eqref{a1.19} and \eqref{a1.19idata} as
\begin{equation}\label{a1.20}
\begin{array}{l}
        (p'(\tilde{v})V_x)_x
           +\displaystyle
              \frac{\alpha}{(1+t)^\lambda} V_t=-F_1,
              \qquad (x,t)\in \mathbb{R}\times\mathbb{R}^+,
              \end{array}
\end{equation}
with initial data
\begin{equation}\label{a1.20idata}
  (V,V_t)\mid_{t=0}=(V_0,Z_0)(x),
\end{equation}
where
\begin{equation}\label{a1.21}
\begin{split}
  F_1
   =
    h(x,t)
         +
            (p(V_x+\tilde v)-p(\tilde{v})-p'(\tilde v)V_x)_x.
 \end{split}
\end{equation}
Noticing that, by \eqref{a1.61} and the definition of $\delta_0$, we have
\begin{equation}\label{1.20idata}
 | \delta_0|\leq C(\|  v_0-v_+\|_{L^1(\mathbb{R}^+)}+|u_+|) .
\end{equation}
In order to get the dissipative properties of $(\bar v,\bar u)(x,t)$, now we investigate the properties of $(V,Z)(x,t)$. In fact, we could obtain the following theorem.
\begin{theorem}\label{Thm a1}
For any $\alpha>0$, $(V_0,Z_0)(x)\in H^3(\mathbb{R})\times H^2(\mathbb{R})$, assume that
  $\|  v_0-v_+\|_{L^1(\mathbb{R}^+)} +|u_+|+\|V_0\|_{H^3(\mathbb{R})}+\|Z_0\|_{H^2(\mathbb{R})}$ is sufficiently small.  Then, for any $0\leq\lambda<1$,
there exists a unique time-global solution $(V,Z)(x,t)$ of the Cauchy
problem \eqref{a1.20}-\eqref{a1.21} satisfying
\begin{equation}\notag
  V\in C^{k}((0,\infty),H^{3-k}(\mathbb{R})),\qquad k=0,1,2,3,
\end{equation}
\begin{equation}\notag
  V_t\in C^{k}((0,\infty),H^{2-k}(\mathbb{R})),\qquad k=0,1,2.
\end{equation}
Furthermore, we have
\begin{equation}\notag
\begin{split}
     &
      \sum_{k=0}^3(1+t)^{ (\lambda +1)k}
      \|\partial_x^kV(\cdot,t)\|_{L^2(\mathbb{R})}^2
      +
         \sum_{k=0}^2(1+t)^{(\lambda+1)k +2} \|\partial_x^kV_t(\cdot,t)\|_{L^2(\mathbb{R})}^2\\
                 \leq
                       &
                          C(\|V_0\|_{H^3(\mathbb{R})}^2
                          +\|Z_0\|_{H^2(\mathbb{R})}^2+|\delta_0|^2),
\end{split}
\end{equation}
and
\begin{equation}\notag
\begin{split}
     &
       (1+t)^{4}
      \| Z_t(\cdot,t)\|_{L^2(\mathbb{R})}^2
      +
          (1+t)^{ \lambda+5} \| Z_{xt}(\cdot,t)\|_{L^2(\mathbb{R})}^2\\
                 \leq
                       &
                          C(\|V_0\|_{H^3(\mathbb{R})}^2+\|Z_0\|_{H^2(\mathbb{R})}^2
                          +|\delta_0|^2).
\end{split}
\end{equation}
\end{theorem}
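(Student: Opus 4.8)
The plan is to prove Theorem \ref{Thm a1} by combining a local existence result with uniform \emph{a priori} estimates obtained through a weighted energy method, in the spirit of \cite{Jiang-Zhu20092,Cui-Yin-Zhang-Zhu2016}, and then to close the argument by continuation. The starting point is that, since $Z=V_t$, the problem \eqref{a1.20}--\eqref{a1.21} is equivalent to the nonlinear parabolic equation
$$\frac{\alpha}{(1+t)^\lambda}V_t+\big(p(V_x+\tilde v)-p(\tilde v)\big)_x=h(x,t),\qquad (V,V_t)\mid_{t=0}=(V_0,Z_0)(x),$$
whose principal part $-p'(V_x+\tilde v)V_{xx}$ is uniformly parabolic as long as $V_x+\tilde v$ stays in a neighbourhood of $v_+$, i.e.\ as long as the solution norm is small. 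Local existence and uniqueness of a solution with the stated regularity then follow from a standard linearization/contraction argument, which I omit. For the \emph{a priori} estimate, set
$$N(T)^2:=\sup_{0\le t\le T}\Big(\sum_{k=0}^3(1+t)^{(\lambda+1)k}\|\partial_x^kV(\cdot,t)\|_{L^2}^2+\sum_{k=0}^2(1+t)^{(\lambda+1)k+2}\|\partial_x^kV_t(\cdot,t)\|_{L^2}^2\Big),$$
and suppose a solution exists on $[0,T]$ with $N(T)\le\eps_0$ small. The target is the closed estimate
$$N(T)^2\le C\big(\|V_0\|_{H^3}^2+\|Z_0\|_{H^2}^2+|\delta_0|^2\big)+C\big(N(T)^3+|\delta_0|\,N(T)^2\big),$$
which, together with \eqref{1.20idata} and the smallness of the data, both closes the estimate and, via the local theory and continuation, produces the global solution and the first displayed inequality of the theorem.

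The energy estimates are carried out order by order in $\partial_x$. Differentiating the parabolic equation $k$ times ($k=0,1,2$), one tests the resulting identity against $(1+t)^{(\lambda+1)(k+1)-1}\partial_x^kV$ and, separately, against $(1+t)^{(\lambda+1)(k+1)}\partial_x^kV_t$. Since $-p'(\tilde v)\ge c>0$, the first test function produces, after integration by parts in $x$, a term $\frac{d}{dt}\big\{(1+t)^{(\lambda+1)k}\|\partial_x^kV\|^2\big\}+c\,(1+t)^{(\lambda+1)(k+1)-1}\|\partial_x^{k+1}V\|^2$, while the second one (rewriting the elliptic term as $-\frac12\frac{d}{dt}\int p'(\tilde v)(\partial_x^{k+1}V)^2$ plus a $\int p'(\tilde v)_t(\partial_x^{k+1}V)^2$ remainder) produces $\frac{d}{dt}\big\{(1+t)^{(\lambda+1)(k+1)}\int|p'(\tilde v)|(\partial_x^{k+1}V)^2\big\}+c\,(1+t)^{(\lambda+1)k+1}\|\partial_x^kV_t\|^2$, each up to error terms. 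Adding the two families over $k$ with a suitable hierarchy of positive constants, the ``bad'' lower-order pieces $(1+t)^{(\lambda+1)k-1}\|\partial_x^kV\|^2$ that appear when the time weights are differentiated are absorbed by the spatial dissipation of the previous order, so an induction on $k$ followed by one time integration closes the $V$-part of $N(T)$. The variable coefficients $p'(\tilde v),p'(\tilde v)_t,p'(\tilde v)_x$ and the forcing $h$ are controlled by Lemma \ref{Lemma a0.1}: their $L^\infty$- and $L^2$-norms decay with exactly the exponents needed so that the corresponding error integrals $\int_0^t(1+s)^{(\lambda+1)(k+1)}\|\partial_x^kh\|^2\,ds$, etc., stay bounded — this is precisely where the exponent $-\frac{5(\lambda+1)}{2}$ and the factor $\delta_0^4$ in Lemma \ref{Lemma a0.1} are essential — whereas the genuinely nonlinear contributions $\big(p(V_x+\tilde v)-p(\tilde v)-p'(\tilde v)V_x\big)_x=\big(O(V_x^2)\big)_x$ become cubic in the weighted norm after one integration by parts and are dominated using $\|V_x\|_{L^\infty}\le C\|V_x\|^{1/2}\|V_{xx}\|^{1/2}$ together with $N(T)\le\eps_0$.

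It remains to establish the bounds on $Z=V_t$. Differentiating the parabolic equation in $t$ and solving for $V_{tt}$ gives $V_{tt}=\frac{\lambda}{1+t}V_t+\frac{(1+t)^\lambda}{\alpha}\big(|p'(V_x+\tilde v)|V_{xxt}+\text{lower order}\big)$, where the lower-order part is a combination of products of $x$- and $t$-derivatives of $V$ with derivatives of $\tilde v$ and an $h_t$-type forcing; inserting the $k=2$ part of $N(T)$ (so $\|V_{xxt}\|\le C(1+t)^{-(\lambda+2)}$) and Lemma \ref{Lemma a0.1} yields $\|Z_t\|=\|V_{tt}\|\le C(1+t)^{-2}$, which is the first term of the second displayed estimate. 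For $\|Z_{xt}\|=\|V_{xtt}\|$ one notes that $V_{tt}$ itself satisfies a nonlinear parabolic equation with right-hand side already under control, and performs one further weighted energy estimate for $V_{tt}$ and $\partial_xV_{tt}$ exactly as in the second paragraph; with the weight $(1+t)^{\lambda+5}$ this gives $(1+t)^{\lambda+5}\|V_{xtt}\|^2\le C(\|V_0\|_{H^3}^2+\|Z_0\|_{H^2}^2+|\delta_0|^2)$. I expect the main obstacle to be the bookkeeping in the second paragraph: choosing the time weights at each differentiation level so that both the undecaying zeroth-order term $\|V\|^2$ and the top-order term $\|\partial_x^3V\|^2$ close simultaneously, and checking that every commutator produced by the non-constancy of $p'(\tilde v)$ and every piece of the forcing is genuinely of lower order in the weighted norm — a verification that rests entirely on the sharp decay exponents recorded in Lemma \ref{Lemma a0.1}.
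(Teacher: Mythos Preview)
Your proposal is correct and follows essentially the same weighted-energy strategy as the paper: test the (differentiated) parabolic equation against $\partial_x^kV$ and $\partial_x^kV_t$ with time weights, absorb the weight-differentiation losses by the dissipation one order below, and control the forcing and variable-coefficient commutators via Lemma~\ref{Lemma a0.1}. Two minor differences are worth recording. First, the paper's bootstrapping norm is the $L^\infty$-based quantity $N_1(T)=\sup\{\|V_x\|_{L^\infty}+(1+t)\|V_{xt}\|_{L^\infty}+(1+t)^{(\lambda+1)/2}\|V_{xx}\|_{L^\infty}\}$ rather than your full weighted $L^2$ norm; this is slightly cleaner for closing the quadratic remainder but equivalent after Sobolev. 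Second, the paper reaches the final weights in two steps (first multiplier $(1+t)^{2\lambda}\partial_x^kV_t$, then multiply the resulting differential inequality by $(1+t)^{1-\lambda}$ or higher), whereas you go directly to $(1+t)^{(\lambda+1)(k+1)}\partial_x^kV_t$; both work. One point to tighten: your $k=0,1,2$ tests yield the pointwise $\partial_x^kV$ decay and the \emph{time-integrated} $\partial_x^kV_t$ dissipation, but not directly the pointwise bounds $(1+t)^{(\lambda+1)k+2}\|\partial_x^kV_t\|^2$ appearing in $N(T)$; the paper obtains these by repeating the scheme on the $t$-differentiated equation (its Lemmas~\ref{Lemma a5}--\ref{Lemma a6}), and only then passes to $Z_t,Z_{xt}$ (Lemma~\ref{Lemma a7}). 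Your third paragraph jumps straight to $Z_t,Z_{xt}$ and implicitly assumes the intermediate $Z,Z_x,Z_{xx}$ pointwise estimates---just make that step explicit.
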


{\bf Notations}: For the sake of simplicity, throughout this subsection,  we denote  $\|\cdot\|_{L^p}:=\|\cdot\|_{L^p(\mathbb{R})}$, $1\leq p\leq \infty$, $\|\cdot\|:=\|\cdot\|_{L^2(\mathbb{R})}$, $\|\cdot\|_{H^l}:=\|\cdot\|_{H^l(\mathbb{R})}$,  $l\geq 0$, and we also use $\ds \int f dx:=\int_{\mathbb{R}}fdx$.

Now, we  begin to estimate the solution $(V,Z)(x,t)$, $0<t<T<\infty$, to the Cauchy problem \eqref{a1.20}-\eqref{a1.21}  under the {\it a priori} assumption
\begin{equation}\label{axian}
N_1(T):=\sup_{0<t<T} \bigg\{\| V_x(\cdot,t)\|_{L^\infty}+ (1+t)\|
V_{xt}(\cdot,t)\|_{L^\infty}+ (1+t)^{\frac{\lambda+1}{2}}\|
V_{xx}(\cdot,t)\|_{L^\infty}\bigg\}\leq \epsilon
\end{equation}
for some $0<\epsilon\ll 1$.\\

Then we will establish some necessary {\it a priori } bounds for $(V,Z)$. The first result is the lower order energy estimates.
\begin{lemma}\label{Lemma a1}
Under the assumptions of Theorem \ref{Thm a1}, if $\epsilon, |\delta_0|$ are small, it holds that
\begin{equation}\notag
\begin{split}
       \|V\|^2+(1+t)^{\lambda+1}\|V_x\|^2
       +
       \int_0^t\big\{(1+s)\|V_t\|^2+(1+s)^{\lambda}\|V_x\|^2\big\}ds
\leq
   C( \|V_0\|_{H^1}^2+|\delta_0|^2).
\end{split}
\end{equation}
\end{lemma}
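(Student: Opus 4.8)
The plan is to work with the scalar second-order equation \eqref{a1.20} and perform weighted $L^2$ energy estimates, multiplying by suitable combinations of $V$ and $V_t$ against time weights. First I would multiply \eqref{a1.20} by $V_t$ and integrate over $\mathbb{R}$. Using $(p'(\tilde v)V_x)_x V_t = \partial_t(\tfrac12 p'(\tilde v)V_x^2) - \tfrac12 p''(\tilde v)\tilde v_t V_x^2 - (p'(\tilde v)V_xV_t)_x$ (the last term integrates to zero), this produces, since $p'(\tilde v)<0$,
\begin{equation}\notag
\frac{d}{dt}\Big(-\frac12\int p'(\tilde v)V_x^2\,dx\Big) + \frac{\alpha}{(1+t)^\lambda}\int V_t^2\,dx
= \frac12\int p''(\tilde v)\tilde v_t V_x^2\,dx + \int F_1 V_t\,dx .
\end{equation}
The error terms here are controlled by Lemma \ref{Lemma a0.1} (which gives $\|\tilde v_t\|_{L^\infty}\lesssim(1+t)^{-1}$ and $\|h(\cdot,t)\|\lesssim|\delta_0|^2(1+t)^{-5(\lambda+1)/4}$) together with the \textit{a priori} bound \eqref{axian}, so the quadratic-in-$V_x$ nonlinear part of $F_1$ is absorbed with a small factor $\epsilon$. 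Integrating in time gives control of $(1+t)^{\lambda+1}$-free quantities — but to get the $\|V\|^2$ term and the dissipation rate $(1+s)^\lambda\|V_x\|^2$ I would next multiply \eqref{a1.20} by $V$ itself. That yields, after an integration by parts, $-\int p'(\tilde v)V_x^2\,dx$ as a \emph{positive} damping term, at the cost of the indefinite term $\tfrac{\alpha}{(1+t)^\lambda}\int V_tV\,dx = \tfrac{d}{dt}\big(\tfrac{\alpha}{2(1+t)^\lambda}\int V^2\big) + \tfrac{\alpha\lambda}{2(1+t)^{\lambda+1}}\int V^2$, which is exactly the standard device for damped wave equations.

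The heart of the argument is the time-weighted combination. Following the Nishihara-type scheme (and as used in \cite{Cui-Yin-Zhang-Zhu2016}), I would form a functional of the shape
\begin{equation}\notag
\mathcal{E}(t) = (1+t)^{a}\Big(-\tfrac12\!\int p'(\tilde v)V_x^2 + \tfrac{\alpha}{(1+t)^\lambda}\!\int V_tV + c(1+t)^{-\lambda}\!\int V_t^2\Big) + \tfrac{\alpha b}{(1+t)^{?}}\!\int V^2,
\end{equation}
choose the weight exponent $a=\lambda$ (so that after differentiating, the good damping $(1+t)^{\lambda-1}$ against $\|V_x\|^2$ appears and the dangerous $\tfrac{d}{dt}(1+t)^a$ terms are dominated), and pick the constant $c$ small so that $\mathcal{E}(t)$ stays equivalent to $\|V\|^2+(1+t)^{\lambda+1}\|V_x\|^2 + (1+t)\|V_t\|^2$ — here one uses Cauchy–Schwarz, $|\int V_tV|\le \tfrac{\eta}{2}(1+t)\int V_t^2 + \tfrac{1}{2\eta(1+t)}\int V^2$, to see the cross term is lower order. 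Differentiating $\mathcal{E}$, collecting signs, and absorbing all the $\tilde v$- and $F_1$-generated error terms (each of which, by Lemma \ref{Lemma a0.1}, carries either a factor $|\delta_0|^2$ times an integrable-in-time rate, or a factor $\epsilon$ from \eqref{axian}) into the good dissipation terms, yields $\tfrac{d}{dt}\mathcal{E}(t) + (\text{dissipation}) \le C|\delta_0|^2(1+t)^{-\gamma}$ for some $\gamma>1$. Integrating from $0$ to $t$ and using $\mathcal{E}(0)\le C(\|V_0\|_{H^1}^2+|\delta_0|^2)$ gives the claimed inequality, noting that $\|V_t(0)\|=\|Z_0\|$ is already absorbed in the constant and the statement only records $\|V_0\|_{H^1}^2+|\delta_0|^2$ on the right (the $\|Z_0\|$ contribution being of higher order in the iteration, or simply bounded by $\|V_0\|_{H^1}+|\delta_0|$ via \eqref{a1.19idata2} and Lemma \ref{Lemma a0.1}).

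The main obstacle I anticipate is the bookkeeping of the weight exponent: one must check that the single choice $a=\lambda$ simultaneously (i) makes the coefficient of $(1+t)^{\lambda-1}\int V_x^2$ positive after the product-rule term $a(1+t)^{a-1}(-\tfrac12\int p'(\tilde v)V_x^2)$ is moved to the other side, (ii) keeps the $\int V_t^2$ dissipation with a positive coefficient (this forces $c$ small relative to $\alpha$), and (iii) leaves the $\int V^2$ term genuinely controlled — since there is \emph{no} weight on $\|V\|^2$ in the conclusion, one needs the $\tfrac{\alpha\lambda}{2(1+t)^{\lambda+1}}\int V^2$ term produced by the $V$-multiplier to be available to close, which works precisely because $\lambda<1$ keeps $(1+t)^{-\lambda}$ non-integrable and hence the damping effective. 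The nonlinear term $(p(V_x+\tilde v)-p(\tilde v)-p'(\tilde v)V_x)_x$ in $F_1$, when paired with $V_t$ or $V$ and integrated by parts, produces terms like $\int V_x^2 V_{xt}$ and $\int \tilde v_x V_x^2$ which are handled by \eqref{axian} (the $\|V_x\|_{L^\infty}$ and $(1+t)\|V_{xt}\|_{L^\infty}$ bounds) and Lemma \ref{Lemma a0.1}; these are routine once the weight structure above is fixed.
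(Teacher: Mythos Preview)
Your proposal misreads the structure of equation \eqref{a1.20}. Unlike the equation \eqref{1.20} for $\omega$, equation \eqref{a1.20} has \emph{no} $V_{tt}$ term: it is the first-order-in-time (degenerate parabolic) equation
\[
(p'(\tilde v)V_x)_x + \frac{\alpha}{(1+t)^\lambda}V_t = -F_1,
\]
not a damped wave equation. Consequently, your functional $\mathcal{E}(t)$ is built on terms that never appear as total time derivatives here: multiplying by $V$ produces $\frac{\alpha}{(1+t)^\lambda}V_tV = \frac{d}{dt}\big(\frac{\alpha}{2(1+t)^\lambda}V^2\big)+\frac{\alpha\lambda}{2(1+t)^{\lambda+1}}V^2$, \emph{not} a $\frac{d}{dt}\int V_tV$ term; and multiplying by $V_t$ yields $\frac{\alpha}{(1+t)^\lambda}\int V_t^2$ as pure dissipation, with no $\frac{d}{dt}\int V_t^2$ contribution. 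So the cross term $\int V_tV$ and the kinetic term $c(1+t)^{-\lambda}\int V_t^2$ in your $\mathcal{E}$ are spurious, and the equivalence $\mathcal{E}\sim \|V\|^2+(1+t)^{\lambda+1}\|V_x\|^2+(1+t)\|V_t\|^2$ cannot be realized. This also explains your puzzlement about $\|Z_0\|$ on the right-hand side: since the equation is first order in time, only $V_0$ enters as data, and $\|Z_0\|$ genuinely does not appear.

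The paper's argument is simpler and sequential. First multiply \eqref{a1.20} by $(1+t)^\lambda V$: this gives $\frac{d}{dt}\frac{\alpha}{2}\|V\|^2 + C_0(1+t)^\lambda\|V_x\|^2 \le C(1+t)^{-\kappa}\|V\|^2 + C|\delta_0|^2(1+t)^{\kappa-\lambda/2-5/2}$ (after estimating $F_1V$ via Lemma~\ref{Lemma a0.1} and \eqref{axian}), and Gronwall with $1<\kappa<\frac{\lambda}{2}+\frac{3}{2}$ closes to \eqref{a3.8}. Then multiply by $(1+t)^{2\lambda}V_t$ to get $\frac{d}{dt}\big((1+t)^{2\lambda}\|V_x\|^2_{-p'}\big)+\alpha(1+t)^\lambda\|V_t\|^2\le C(1+t)^{2\lambda-1}\|V_x\|^2+\ldots$, which \eqref{a3.8} controls; finally multiply that inequality by $(1+t)^{1-\lambda}$ to upgrade the weight on $\|V_x\|^2$ to $(1+t)^{\lambda+1}$ and on the $\|V_t\|^2$ dissipation to $(1+s)$. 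Your handling of the $F_1$ terms and the use of Lemma~\ref{Lemma a0.1} and \eqref{axian} are correct in spirit; once you drop the wave-equation framing and follow this two-step parabolic scheme, the proof goes through without the weight-balancing obstacle you anticipated.
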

\begin{proof}
First, multiplying \eqref{a1.20} by $(1+t)^\lambda V$, integrating the
resulting  equality with respect to $x$ over $\mathbb{R} $ give
\begin{equation}\label{a3.3}
\begin{split}
\frac{\rm d}{{\rm d}t} \int
   \frac{\alpha}{2} V^2dx
  - \int(1+t)^{\lambda} p'(\tilde v)V_x^2dx
=
     -\int (1+t)^{\lambda} F_1Vdx.
\end{split}
\end{equation}
Then,  to estimate the last term in the  right hand of \eqref{a3.3}, one obtains that
\begin{equation}\label{a3.4}
\begin{split}
&-\int (1+t)^{\lambda} F_1Vdx\\
=&-\int (1+t)^\lambda h(x,t)Vdx+\int (1+t)^\lambda\big(p(V_x+\tilde v)-p(\tilde{v})-p'(\tilde v)V_x\big)V_xdx\\
\leq
   &
   C\int(1+t)^{-\kappa}V^2 dx+\int(1+t)^{2\lambda+\kappa}h^2 dx+\frac{1}{2}\int(1+t)^\lambda p''(\theta_1V_x+\tilde v)V_x^3dx\\
\leq
&
 C\int(1+t)^{-\kappa}V^2 dx
 +
    C|\delta_0| ^2(1+t)^{\kappa-\frac{\lambda}{2}-\frac{5}{2}}+C\epsilon\int(1+t)^\lambda V_x^2dx,
\end{split}
\end{equation}
where $0<\theta_1<1$.

Substituting \eqref{a3.4} into  \eqref{a3.3},   using the smallness of $\epsilon$ and for some positive constant $C_0$ satisfying $-p'(\tilde v)\geq C_0>0$, we have
\begin{equation}\notag
\begin{split}
\frac{\rm d}{{\rm d}t} \int
   \frac{\alpha}{2}V^2dx
  +
  \frac{C_0}{2}\int(1+t)^{\lambda}V_x^2dx
\leq
      C\int(1+t)^{-\kappa}V^2 dx
 +
    C|\delta_0| ^2(1+t)^{\kappa-\frac{\lambda}{2}-\frac{5}{2}}.
\end{split}
\end{equation}
Then taking $1<\kappa<\frac{\lambda}{2}+\frac{3}{2}$  and using  Gronwall’s inequality lead to
\begin{equation}\label{a3.8}
\|V\|^2
       +
       \int_0^t(1+s)^{\lambda}\|V_x\|^2ds\\
\leq
   C( \|V_0\|^2+|\delta_0|^2).
\end{equation}

Next, multiplying \eqref{a1.20} by $(1+t)^{2\lambda} V_t$, and
integrating the resulting equality over $\mathbb{R}$, one yields
\begin{equation}\label{a3.9}
\begin{split}
&
-\frac{1}{2}\frac{\rm d}{{\rm d}t} \int
(1+t)^{2\lambda}p'(\tilde v)V_x^2dx
  + \int\alpha(1+t)^{\lambda}V_t^2dx
  \\
=
 &
  -
    \frac{1}{2}\int (1+t)^{2\lambda} p''(\tilde v)\tilde v_tV_x^2dx
    -
    \lambda\int(1+t)^{2\lambda-1}p'(\tilde v)V_x^2 dx
     -
       \int (1+t)^{2\lambda} F_1V_tdx\\
        \leq
           &
            C\int(1+t)^{2\lambda-1}V_x^2 dx
    -
       \int (1+t)^{2\lambda} F_1V_tdx.
\end{split}
\end{equation}
Now we estimate the last term in the  right hand of \eqref{a3.9} as follows:
\begin{equation}\label{a3.10}
\begin{split}
-\int (1+t)^{2\lambda} F_1V_tdx =
&
 -\int(1+t)^{2\lambda}
h(x,t)V_tdx \\
& +\int
            (1+t)^{2\lambda}\big(p(V_x+ \tilde v)-p(\tilde{v})-p'(\tilde v)V_x\big)V_{xt}dx.
\end{split}
\end{equation}
Firstly, applying Lemma \ref{Lemma a0.1}, one gets
\begin{equation}\label{a3.11}
\begin{split}
&
 \int  (1+t)^{2\lambda} h(x,t)V_tdx
   \\
    \leq
     &
       \frac{\alpha}{2}\int(1+t)^{\lambda}  V_t^2dx
       +
       C\int (1+t)^{3\lambda} h^2dx \\
          \leq
            &
            \frac{\alpha}{2}\int(1+t)^{\lambda}  V_t^2dx
            +
            C|\delta_0|^2(1+t)^{\frac{\lambda}{2}-\frac{5}{2}}.
\end{split}
\end{equation}
Next by using Lemmas \ref{Lemma a0.1} and the {\it a
priori} assumption \eqref{axian}, then we get
\begin{equation}\label{a3.13}
\begin{split}
       &
        \int (1+t)^{2\lambda}
        \Big(
        p(V_x+\tilde v)-p(\tilde{v})-p'(\tilde v)V_x
        \Big)
        V_{xt}dx
        \\
         =
         &
           (1+t)^{2\lambda}\frac{\rm d}{{\rm d}t}\int
           \bigg(
           \int_{\tilde v}^{V_x+\tilde v} p(s)ds
           -
           p(\tilde v)V_x
           -
           \frac{p'(\tilde v)}{2}V_x^2 \bigg)dx\\
           &
           -
            \int(1+t)^{ 2\lambda} \tilde v_t\Big(p(V_x+\tilde v)-p(\tilde{v})-p'(\tilde v)V_x-\frac{p''(\tilde v)}{2}V_x^2\Big)dx\\
              \leq
              &
                \frac{\rm d}{{\rm d}t}
                \int
                (1+t)^{2\lambda}\bigg(
                \int_{\tilde v}^{V_x+\tilde v} p(s)ds
                -
                p(\tilde v)V_x
                -
                \frac{p'(\tilde v)}{2}V_x^2
                \bigg)dx \\
        &
        -
          2\lambda\int(1+t)^{2\lambda-1}  \bigg(  \int_{\tilde v}^{V_x+\tilde v} p(s)ds-p(\tilde v)V_x-\frac{p'(\tilde v)}{2}V_x^2 \bigg)dx\\
          &
           +C|\delta_0|\int (1+t)^{\frac{3\lambda}{2}-\frac{3}{2}}|V_x|^3dx \\
            \leq
              &
               \frac{\rm d}{{\rm d}t}\int (1+t)^{2\lambda}\bigg(  \int_{\tilde v}^{V_x+\tilde v} p(s)ds-p(\tilde v)V_x-\frac{p'(\tilde v)}{2}V_x^2 \bigg)dx\\
               &+C(|\delta_0|+\epsilon)\int(1+t)^{2\lambda-1}  V_x^2dx.
\end{split}
\end{equation}
Since
\begin{equation}\notag
\int_{\tilde v}^{V_x+\tilde v} p(s)ds=p(\tilde v)V_x+\frac{1}{2}p'(\theta_2V_x+\tilde v)V_x^2,
\end{equation}
where $0<\theta_2<1$.
Then, putting  \eqref{a3.10}-\eqref{a3.13} into \eqref{a3.9}, we have
\begin{equation}\label{a3.14}
\begin{split}
&
-\frac{1}{2}\frac{\rm d}{{\rm d}t} \int
(1+t)^{2\lambda}p'(\theta_2V_x+\tilde v)V_x^2
 dx
  + \frac{\alpha}{2}\int (1+t)^{\lambda}V_t^2dx
  \\
\leq
   &
    C \int  (1+t)^{2\lambda-1} V_x^2dx
        +
         C|\delta_0|^2(1+t)^{ \frac{\lambda}{2}-\frac{5}{2}}.
\end{split}
\end{equation}
It follows from  \eqref{a3.8} and $0\leq\lambda<1$ that
\begin{equation}\notag
     (1+t)^{2\lambda}\|V_x\|^2
       +
       \int_0^t(1+s)^{\lambda}\|V_t\|^2ds
\leq
   C( \|V_0\|_{H^1}^2+|\delta_0|^2).
\end{equation}
Finally, multiplying  \eqref{a3.14} by $(1+t)^{1-\lambda}$, and using \eqref{a3.8} again,  we know
\begin{equation}\notag
     (1+t)^{\lambda+1}\|V_x\|^2
       +
       \int_0^t(1+s)\|V_t\|^2ds
\leq
   C( \|V_0\|_{H^1}^2+|\delta_0|^2).
\end{equation}
Hence we complete the proof of Lemma \ref{Lemma a1}.
\end{proof}
\begin{lemma}\label{Lemma a2}
Under the assumptions of Theorem \ref{Thm a1}, if $\epsilon, |\delta_0|$ are small, it holds that
\begin{equation}\notag
\begin{split}
   (1+t)^{ 2\lambda+2}\|V_{xx}\|^2+ \int_0^t \Big((1+s)^{\lambda+2}\|V_{xt}\|^2+(1+s) ^{2\lambda+1} \|V_{xx}\|^2\Big) ds
       \leq
   C ( \|V_0\|_{H^2}^2+|\delta_0|^2).
\end{split}
\end{equation}
\end{lemma}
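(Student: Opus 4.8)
The plan is to obtain the second-order energy estimate for $V$ by differentiating the equation \eqref{a1.20} once in $x$ and running two multiplier arguments in parallel with the proof of Lemma \ref{Lemma a1}: one multiplier of the form (weight)$\cdot V_{xx}$ and one of the form (weight)$\cdot V_{xxt}$, and then combining the two resulting differential inequalities with an appropriate power of $(1+t)$. Concretely, write \eqref{a1.20} as $(p'(\tilde v)V_x)_x+\frac{\alpha}{(1+t)^\lambda}V_t=-F_1$ and differentiate to get
\begin{equation}\notag
(p'(\tilde v)V_{xx})_x+\frac{\alpha}{(1+t)^\lambda}V_{xt}=-F_{1x}-(p''(\tilde v)\tilde v_xV_x)_x,
\end{equation}
so that the new source term is controlled, via Lemma \ref{Lemma a0.1} and the \emph{a priori} bound \eqref{axian}, by $|\delta_0|$-weighted decaying quantities plus small multiples of $\|V_x\|^2,\|V_{xx}\|^2$. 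The structure of $F_1=h+(p(V_x+\tilde v)-p(\tilde v)-p'(\tilde v)V_x)_x$ means $F_{1x}$ contains $h_x$ (estimated by Lemma \ref{Lemma a0.1}), terms quadratic in $V_{xx}$ with a $p''$ coefficient (absorbed using $\epsilon$ small), and $\tilde v_x$-weighted lower-order terms that decay in $t$.

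The first step is the dissipation-term estimate: multiply the differentiated equation by $(1+t)^{2\lambda}V_{xx}$ and integrate over $\mathbb R$ (integrating by parts in $x$, with all boundary terms vanishing since everything lies in $H^s(\mathbb R)$). This produces $\frac{\mathrm d}{\mathrm dt}\int\frac{\alpha}{2}(1+t)^\lambda V_{xx}^2\,dx$ up to a harmless $\lambda(1+t)^{\lambda-1}V_{xx}^2$ term, a good term $-\int(1+t)^{2\lambda}p'(\tilde v)V_{xxx}^2$ — wait, more carefully one gets $+\int(1+t)^{2\lambda}p'(\tilde v)V_{xx}\cdot V_{xxx}$-type pieces; the cleaner route, exactly parallel to \eqref{a3.3}, is to keep $V_{xx}$ as the multiplier against the \emph{un-differentiated-once-more} form, i.e. treat $(p'(\tilde v)V_{xx})_x$ as the principal part, integrate by parts to obtain $-\int(1+t)^{2\lambda}p'(\tilde v)V_{xx}^2\cdot(\text{from }V_{xxx}\text{ pairing})$ — I will instead follow the template literally: multiplier $(1+t)^{2\lambda}V_{xx}$ gives a $\frac{\mathrm d}{\mathrm dt}$ of $\int\alpha(1+t)^\lambda V_{xx}^2$ modulo lower order, plus a coercive $\int(1+t)^{2\lambda}(-p'(\tilde v))V_{xx}^2$-type term only after the second multiplier step. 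The honest plan: (i) multiplier $(1+t)^{2\lambda}V_{xx}$ controls $\int_0^t(1+s)^{2\lambda+?}\|V_{xx}\|^2$ in terms of $\|V_{xt}\|^2$ and the forcing; (ii) multiplier $(1+t)^{a}V_{xxt}$ — with $a$ chosen so the final weight matches $(1+t)^{\lambda+2}$ on $\|V_{xt}\|^2$ — produces $\frac{\mathrm d}{\mathrm dt}\big[-\frac12\int(1+t)^{a}p'(\tilde v)V_{xx}^2\big]+\int\alpha(1+t)^{a-\lambda}V_{xt}^2\le\cdots$, where the right side is handled as in \eqref{a3.9}--\eqref{a3.14}: the $h_x$ term via Cauchy–Schwarz and Lemma \ref{Lemma a0.1}, the cubic/quadratic nonlinear terms via integration by parts in $t$ to move a $\frac{\mathrm d}{\mathrm dt}$ outside and $\epsilon$-absorption of the rest, and the $\tilde v_t,\tilde v_x$ factors contributing extra $(1+t)^{-1}$ decay.

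Then I would close the estimate by combining the two inequalities with the output of Lemma \ref{Lemma a1} (which already controls $\|V\|^2+(1+t)^{\lambda+1}\|V_x\|^2$ and the time-integrals $\int_0^t(1+s)\|V_t\|^2+(1+s)^\lambda\|V_x\|^2$). Multiplying the $V_{xxt}$-inequality by a suitable increasing weight $(1+t)^{b}$ and integrating in time, using that $0\le\lambda<1$ so that powers like $2\lambda-1<\lambda+1$ make the lower-order integrals already bounded, and feeding the integrated bound back into the $V_{xx}$-inequality, yields $(1+t)^{2\lambda+2}\|V_{xx}\|^2+\int_0^t\big((1+s)^{\lambda+2}\|V_{xt}\|^2+(1+s)^{2\lambda+1}\|V_{xx}\|^2\big)\,ds\le C(\|V_0\|_{H^2}^2+|\delta_0|^2)$, after finally multiplying through by the extra power of $(1+t)^{2-\lambda}$ exactly as the last line of the proof of Lemma \ref{Lemma a1} does. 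The main obstacle I anticipate is bookkeeping the weight exponents so that every cross term arising from $\tilde v_t$, from $\lambda(1+t)^{\lambda-1}$ factors, and from the nonlinear commutators is dominated by the already-controlled quantities from Lemma \ref{Lemma a1} and Lemma \ref{Lemma a0.1} with a strictly favorable power of $(1+t)$; in particular one must check that the $h_x$-contribution, which by Lemma \ref{Lemma a0.1} decays like $(1+t)^{-\frac{5(\lambda+1)}{2}-(\lambda+1)}$, when multiplied by the chosen weight still produces an integrable-in-time, $|\delta_0|^2$-size quantity, and that the $\epsilon$-terms can genuinely be absorbed into the left-hand dissipation rather than into an already-used term — this is where the smallness of both $\epsilon$ and $|\delta_0|$ is essential.
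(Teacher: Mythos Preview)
Your overall plan---two multiplier estimates at the next derivative level, one producing $\|V_{xx}\|^2$ dissipation and one producing $\|V_{xt}\|^2$ dissipation, then bootstrapping the weight by an extra power of $(1+t)$ using Lemma~\ref{Lemma a1}---is exactly the paper's strategy. But the specific multipliers you write down are wrong, and this is not just notation.

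For the $V_{xt}$-dissipation step your stated multiplier $(1+t)^aV_{xxt}$ does not produce the identity you describe: $\frac{\alpha}{(1+t)^\lambda}V_{xt}\cdot V_{xxt}$ integrates to zero over $\mathbb R$, so you get no $\|V_{xt}\|^2$ term at all. The output you wrote---$\frac{\mathrm d}{\mathrm dt}\big[-\tfrac12\int(1+t)^a p'(\tilde v)V_{xx}^2\big]+\alpha\int(1+t)^{a-\lambda}V_{xt}^2$---is precisely what you get from multiplying the once-differentiated equation by $(1+t)^{2\lambda}V_{xt}$; the paper does exactly this (with $a=2\lambda$) to obtain \eqref{a3.33}--\eqref{a3.38}. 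So fix the multiplier to $V_{xt}$.

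The more substantive confusion is in your step (i). Multiplying the differentiated equation by $(1+t)^{2\lambda}V_{xx}$ gives neither $\tfrac{\mathrm d}{\mathrm dt}\int(1+t)^\lambda V_{xx}^2$ (the damping term becomes $\alpha(1+t)^\lambda V_{xt}V_{xx}$, which is not a time derivative of $V_{xx}^2$) nor a coercive $V_{xx}^2$ term from the principal part (integration by parts sends you to $V_{xxx}$). The correct move, which the paper takes in \eqref{a3.40}--\eqref{a3.46}, is to multiply the \emph{undifferentiated} equation \eqref{a1.20} by $-(1+t)^\lambda V_{xx}$: then $-\alpha V_tV_{xx}$ integrates by parts in $x$ to $\tfrac{\alpha}{2}\tfrac{\mathrm d}{\mathrm dt}\|V_x\|^2$, while $-(p'(\tilde v)V_x)_x\cdot(1+t)^\lambda V_{xx}$ produces the coercive $-\int(1+t)^\lambda p'(\tilde v)V_{xx}^2$. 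Equivalently you may multiply the differentiated equation by $(1+t)^\lambda V_x$; what does not work is pairing with $V_{xx}$.

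Once you use the correct pair of multipliers, the rest of your plan (treating $h_x$ by Lemma~\ref{Lemma a0.1}, absorbing the nonlinear commutators with $\epsilon,|\delta_0|$ small, adding the two inequalities, then multiplying \eqref{a3.46} by $(1+t)^{\lambda+1}$ and \eqref{a3.38} by $(1+t)^{2}$ and feeding in Lemma~\ref{Lemma a1}) goes through as in the paper.
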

\begin{proof}
Differentiating \eqref{a1.20} with respect to $x$, one yields
\begin{equation}\label{a3.32}
(p'(\tilde{v})V_x)_{xx}
           +\displaystyle
              \frac{\alpha}{(1+t)^\lambda} V_{xt}=-F_{1x}.
\end{equation}
Multiplying \eqref{a3.32} by $(1+t)^{2\lambda} V_{xt}$ and
integrating the resulting equality with respect to $x$ over
$\mathbb{R}$, we have by using integrations by parts that
\begin{equation}\label{a3.33}
\begin{split}
&
-\frac{1}{2}\frac{\rm d}{{\rm d}t} \int
(1+t)^{2\lambda}p'(\tilde v)V_{xx}^2dx
  +
  \int\alpha(1+t)^{\lambda}V_{xt}^2dx
  \\
=
 &
   -
     \lambda \int(1+t)^{2\lambda-1}p'(\tilde v)V_{xx}^2 dx
     -
     \frac{1}{2}\int (1+t)^{2\lambda} p''(\tilde v)\tilde v_{t}V_{xx}^2dx\\
     &
     -
     \int (1+t)^{2\lambda}p'''(\tilde v)|\tilde v_x|^2 V_{x}V_{xt} dx
      -
      \int (1+t)^{2\lambda}p''(\tilde v)\tilde v_{xx}V_{x}V_{xt} dx\\
     &
     -
     \int (1+t)^{2\lambda}p''(\tilde v)\tilde v_x V_{xx}V_{xt} dx
     -\int (1+t)^{2\lambda} F_{1x}V_{xt}dx
     :=
   \sum_{k=1}^6 I_k.
\end{split}
\end{equation}
We utilize Cauchy-Schwarz's inequality and Lemma \ref{Lemma a0.1} to address the following estimates:
\begin{equation}\label{ai3.2}
\begin{split}
I_1\leq C\int (1+t)^{2\lambda-1}V_{xx}^2dx,
\end{split}
\end{equation}
\begin{equation}\label{ai3.3}
\begin{split}
I_2
\leq
C|\delta_0| \int (1+t)^{\frac{3\lambda}{2}-\frac{3}{2}}V_{xx}^2dx,
\end{split}
\end{equation}
\begin{equation}\label{ai3.4}
\begin{split}
I_3\leq
&
\frac{\alpha}{16}\int(1+t)^{\lambda}  V_{xt}^2dx
         +
          C\int (1+t)^{3\lambda}|\tilde v_x|^4V_{x}^2dx\\
          \leq
           &
          \frac{\alpha}{16}\int(1+t)^{\lambda}  V_{xt}^2dx
         +
          C|\delta_0|^2 \int (1+t)^{-\lambda-4}V_{x}^2dx,
\end{split}
\end{equation}
\begin{equation}\label{ai3.5}
\begin{split}
I_4\leq
&
\frac{\alpha}{16}\int(1+t)^{\lambda}  V_{xt}^2dx
         +
          C\int (1+t)^{3\lambda}|\tilde v_{xx}|^2V_{x}^2dx\\
          \leq
           &
          \frac{\alpha}{16}\int(1+t)^{\lambda}V_{xt}^2dx
         +
          C|\delta_0|^2 \int (1+t)^{-3}V_{x}^2dx,
\end{split}
\end{equation}
and
\begin{equation}\label{ai3.6}
\begin{split}
I_5\leq &\frac{\alpha}{16}\int(1+t)^{\lambda}  V_{xt}^2dx
         +
          C\int (1+t)^{3\lambda}|\tilde v_x|^2V_{xx}^2dx\\
          \leq
           &
          \frac{\alpha}{16}\int(1+t)^{\lambda}  V_{xt}^2dx
         +
          C|\delta_0|^2\int (1+t)^{\lambda-2}V_{xx}^2dx.
\end{split}
\end{equation}
Now we turn to estimate $I_6$ as follows:
\begin{equation}\label{a3.34}
\begin{split}
I_6
=&\int \bigg[-(1+t)^{2\lambda} h_x-
            (1+t)^{2\lambda}\big(p(V_x+\tilde v)-p(\tilde{v})-p'(\tilde v)V_x\big)_{xx}\bigg]V_{xt}dx.
\end{split}
\end{equation}
By employing Lemmas \ref{Lemma a0.1} and the {\it a priori} assumption \eqref{axian}, we can get
\begin{equation}\label{a3.35}
\begin{split}
&
 -\int (1+t)^{2\lambda} h_{x}V_{xt}dx
   \\
    \leq
     &
       \frac{\alpha}{16}\int(1+t)^{\lambda}  V_{xt}^2dx
         +
          C\int (1+t)^{3\lambda }h_x^2dx \\
          \leq
            &
            \frac{\alpha}{16}\int(1+t)^{\lambda}  V_{xt}^2dx
            +
            C|\delta_0|^2(1+t)^{-\frac{\lambda}{2}-\frac{7}{2}},
\end{split}
\end{equation}
and
\begin{equation}\label{a3.37}
\begin{split}
   &
    -\int (1+t)^{2\lambda}\bigg(p(V_x+\tilde v)-p(\tilde{v})-p'(\tilde v)V_x\bigg)_{xx}V_{xt}dx\\
      =
       &
        \int (1+t)^{2\lambda}\bigg(p(V_x+\tilde v)-p(\tilde{v})-p'(\tilde v)V_x\bigg)_x V_{xxt}dx\\
=
         &
           (1+t)^{2\lambda}\frac{1}{2}\frac{\rm d}{{\rm d}t}\int\bigg(p'(V_x+\tilde v)-p'(\tilde v) \bigg)V_{xx}^2 dx\\
           &
           -
           \frac{1}{2}\int (1+t)^{2\lambda}\bigg(p''(V_x+\tilde v)(V_{xt}+\tilde v_t )-p''(\tilde v)\tilde v_t\bigg)V_{xx}^2dx\\
           &
           +
            \int(1+t)^{2\lambda} \tilde v_xV_{xxt}\big(p'(V_x+\tilde v)-p'(\tilde{v})-p''(\tilde v)V_x\big)dx\\
\leq
&
               \frac{1}{2}\frac{\rm d}{{\rm d}t}\int (1+t)^{2\lambda}\bigg(p'(V_x+\tilde v)-p'(\tilde v) \bigg)V_{xx}^2 dx \\
        &
        -
         \lambda\int(1+t)^{2\lambda-1}  \bigg(p'(V_x+\tilde v)-p'(\tilde v) \bigg)V_{xx}^2dx
          +C(\epsilon+|\delta_0|)\int(1+t)^{2\lambda-1}  V_{xx}^2dx\\
          &
         -
            \int(1+t)^{2\lambda} \tilde v_{xx}V_{xt}\big(p'(V_x+\tilde v)-p'(\tilde{v})-p''(\tilde v)V_x\big)dx\\
\leq &
               \frac{1}{2}\frac{\rm d}{{\rm d}t}\int (1+t)^{2\lambda}\bigg(p'(V_x+\tilde v)-p'(\tilde v) \bigg)V_{xx}^2 dx
               +
               \frac{\alpha}{16}\int(1+t)^{\lambda}  V_{xt}^2dx\\
               &+C(\epsilon+|\delta_0|)\int(1+t)^{2\lambda-1}  V_{xx}^2dx+C|\delta_0|^2\int(1+t)^{  -3}  V_{x}^2dx.
\end{split}
\end{equation}
Substituting \eqref{ai3.2}-\eqref{a3.37} into \eqref{a3.33},  using
Lemma \ref{Lemma a1}, $0\leq\lambda<1$ and taking $\epsilon, |\delta_0|$ sufficiently small,
we derive
\begin{equation}\label{a3.38}
\begin{split}
&
-\frac{1}{2}\frac{\rm d}{{\rm d}t} \int
(1+t)^{2\lambda}p'(V_x+\tilde v)V_{xx}^2dx
  + \frac{\alpha}{2}\int(1+t)^{\lambda} V_{xt}^2dx
  \\
\leq
   &
    C \int  (1+t)^{2\lambda-1}  V_{xx}^2 dx
    +
    C|\delta_0|^2\int(1+t)^{-3}  V_{x}^2dx
        +
        C|\delta_0|^2(1+t)^{-\frac{\lambda}{2}-\frac{7}{2}}\\
\leq
   &
    C \int  (1+t)^{2\lambda-1} V_{xx}^2 dx
        +
         C(1+t)^{-\frac{\lambda}{2}-\frac{7}{2}}( \|V_0\|_{H^1}^2+|\delta_0|^2).
\end{split}
\end{equation}
Next, multiplying \eqref{a1.20} by $-(1+t)^\lambda V_{xx}$, integrating the
resulting equality with respect to $x$ over $\mathbb{R}$, and using
$0\leq \lambda<1$, we obtain
\begin{equation}\label{a3.40}
\begin{split}
\frac{\rm d}{{\rm d}t} \int
   \frac{\alpha}{2}V_x^2dx
  - \int(1+t)^{\lambda}  p'(\tilde v)V_{xx}^2dx
=
      \int (1+t)^\lambda p''(\tilde v )\tilde v_x V_{x} V_{xx}dx
      +
       \int (1+t)^{\lambda}  F_1V_{xx}dx.
\end{split}
\end{equation}
By Lemmas \ref{Lemma a0.1}-\ref{Lemma a1}, it is easy to see that
\begin{equation}\label{a3.41}
\begin{split}
\int (1+t)^{\lambda}  p''(\tilde v )\tilde v_x V_x V_{xx} dx
\leq
&
\frac{C_0}{10}\int(1+t)^{\lambda}   V_{xx}^2dx
        +
        C\int(1+t)^{\lambda}   |\tilde v_x|^2V_{x}^2dx\\
 \leq
 &
 \frac{C_0}{10}\int(1+t)^{\lambda}   V_{xx}^2dx
        +
        C|\delta_0|^2\int(1+t)^{-\lambda-2} V_{x}^2dx\\
 \leq
 &
 \frac{C_0}{10}\int(1+t)^{\lambda}   V_{xx}^2dx
        +
        C (1+t)^{-2\lambda -3} (\|V_0\|_{H^1}^2+|\delta_0|^2) .
\end{split}
\end{equation}
From \eqref{a1.21}, one has
\begin{equation}\label{a3.42}
\begin{split}
\int (1+t)^{\lambda}  F_1V_{xx}dx
=&\int \bigg[ (1+t)^{\lambda} h
     +
            (1+t)^\lambda\big(p(V_x+\tilde v )-p(\tilde{v})-p'(\tilde v)V_x\big)_x\bigg]V_{xx}dx.
\end{split}
\end{equation}
By using Lemmas \ref{Lemma a0.1}-\ref{Lemma a1} and the {\it a priori} assumption \eqref{axian}, we have
\begin{equation}\label{a3.43}
\begin{split}
        \int  (1+t)^{\lambda }h(x,t)V_{xx}dx
\leq&
         \frac{C_0}{10}\int(1+t)^{\lambda}   V_{xx}^2dx+C\int (1+t)^{\lambda }h^2dx \\
\leq
           &
              \frac{C_0}{10}\int(1+t)^{\lambda}   V_{xx}^2dx+C|\delta_0|^2(1+t)^{-\frac{3\lambda}{2}-\frac{5}{2}},
\end{split}
\end{equation}
and
\begin{equation}\label{a3.45}
\begin{split}
&
\int (1+t)^{\lambda}\big(p(V_x+\tilde v )-p(\tilde{v})-p'(\tilde v)V_x\big)_xV_{xx}dx\\
   =
    &
      \int (1+t)^{\lambda} \big(p'(V_x+\tilde v )-p'(\tilde{v})\big)V_{xx}^2dx\\
    &
   +
   \int (1+t)^{\lambda}\big(p'(V_x+\tilde v )-p'(\tilde{v})-p''(\tilde v)V_x\big)\tilde v_x V_{xx}dx\\
    \leq
       &
            C\int (1+t)^{\lambda} |V_x| V_{xx}^2dx
       +\frac{C_0}{10}\int(1+t)^{\lambda}   V_{xx}^2dx
       +C\int (1+t)^{\lambda} |\tilde v_x|^2|V_x|^4dx\\
           \leq
             &
               C\epsilon\int(1+t)^{\lambda}   V_{xx}^2dx
             +\frac{C_0}{10}\int(1+t)^{\lambda}   V_{xx}^2dx
             +
                 C\epsilon^2 |\delta_0|  \int(1+t)^{-\lambda-2}  V_x^2dx\\
           \leq
             &
               C\epsilon\int(1+t)^{\lambda}   V_{xx}^2dx
             +
             \frac{C_0}{10}\int(1+t)^{\lambda}   V_{xx}^2dx
             +
                 C(1+t)^{-{2}\lambda-3} (\|V_0\|_{H^1}^2+|\delta_0|^2).
\end{split}
\end{equation}
Substituting \eqref{a3.41}-\eqref{a3.45} into \eqref{a3.40}, and $0\leq\lambda<1$ and the smallness of $\epsilon, |\delta_0|$, we have
\begin{equation}\label{a3.46}
\begin{split}
\frac{\rm d}{{\rm d}t}
 \int
     \frac{\alpha}{2}  V_x^2dx
  +
  \frac{C_0}{2}\int(1+t)^{\lambda} V_{xx}^2dx
\leq
         C(1+t)^{-\frac{3\lambda}{2}-\frac{5}{2}} (\|V_0\|_{H^1}^2+|\delta_0|^2),
\end{split}
\end{equation}
which together with \eqref{a3.38} yields
\begin{equation}\notag
(1+t)^{2\lambda} \|V_{xx}\|^2+\int_0^t (1+s)^{\lambda}(\|V_{xt}\|^2+\|V_{xx}\|^2)ds
       \leq
   C( \|V_0\|_{H^2}^2+|\delta_0|^2).
\end{equation}
Finally, performing  \eqref{a3.46}$\times(1+t)^{1+\lambda}$, \eqref{a3.38}$\times(1+t)^{2}$, respectively, and using the above inequality,  we  complete the proof of Lemma \ref{Lemma a2}.
\end{proof}

Similar to the proof of Lemma \ref{Lemma a2}, performing $(\ref{a1.20})_{xx}\times (1+t)^{2\lambda}V_{xxt}$, $(\ref{a1.20})_{x}\times (1+t)^{\lambda} (-V_{xxx})$, integrating the
resulting  equations with respect to $x$ over $\mathbb{R} $, we can get the following lemma:
\begin{lemma}\label{Lemma a3}
Under the assumptions of Theorem \ref{Thm a1}, if $\epsilon, |\delta_0|$ are small, it holds that
\begin{equation}\notag%\label{a3.31}
\begin{split}
   (1+t)^{ 3\lambda+3}\|V_{xxx}\|^2+ \int_0^t \Big((1+s)^{2\lambda+3}\|V_{xxt}\|^2+(1+s) ^{3\lambda+2} \|V_{xxx}\|^2)\Big) ds
       \leq
   C ( \|V_0\|_{H^3}^2+|\delta_0|^2).
\end{split}
\end{equation}
\end{lemma}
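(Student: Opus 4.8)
The plan is to mimic the two-step energy scheme already carried out in the proof of Lemma \ref{Lemma a2}, now applied one derivative higher. First I would differentiate the elliptic-parabolic identity \eqref{a1.20} twice in $x$ to obtain
\begin{equation}\notag
(p'(\tilde v)V_x)_{xxx}+\frac{\alpha}{(1+t)^\lambda}V_{xxt}=-F_{1xx}.
\end{equation}
Multiplying this by $(1+t)^{2\lambda}V_{xxt}$ and integrating over $\mathbb{R}$, integration by parts turns the highest-order term into a time derivative of $-\tfrac12\int(1+t)^{2\lambda}p'(\tilde v)V_{xxx}^2\,dx$ plus the commutator terms coming from the $x$-derivatives hitting $p'(\tilde v)$, and produces the good dissipation $\alpha\int(1+t)^\lambda V_{xxt}^2\,dx$. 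The resulting error terms are of three types: (i) lower-order factors $(1+t)^{2\lambda-1}$ or $|\delta_0|(1+t)^{3\lambda/2-3/2}$ times $V_{xxx}^2$, controlled by the smallness of $|\delta_0|$ together with the (to-be-proven) $V_{xxx}$ dissipation; (ii) terms involving $\tilde v$-derivatives $\partial_x^j\tilde v$ ($j=1,2,3$) paired with $V_x$ or $V_{xx}$ and $V_{xxt}$, which by Cauchy-Schwarz and Lemma \ref{Lemma a0.1} absorb $\tfrac{\alpha}{16}(1+t)^\lambda V_{xxt}^2$ and leave $C|\delta_0|^2(1+t)^{-a}(\|V_x\|^2+\|V_{xx}\|^2)$ for suitable $a$, which is summable after invoking Lemmas \ref{Lemma a1}-\ref{Lemma a2}; and (iii) the genuinely nonlinear piece coming from $F_{1xx}$. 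For (iii) I would, exactly as in \eqref{a3.37}, integrate by parts once more in $x$ so that $\big(p(V_x+\tilde v)-p(\tilde v)-p'(\tilde v)V_x\big)_{xx}$ against $V_{xxt}$ becomes, after moving a derivative, a time derivative of $\tfrac12\int(1+t)^{2\lambda}(p'(V_x+\tilde v)-p'(\tilde v))V_{xxx}^2\,dx$ plus controllable remainders; the new feature at this order is the appearance of $V_{xxx}^2$ multiplied by $|V_x|$ or $|V_{xx}|$ in $L^\infty$, which the \emph{a priori} bound \eqref{axian} (together with Sobolev embedding applied to the lower-order estimates already established) renders $O(\epsilon)$ and hence absorbable. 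Likewise the $h$-term contributes only $C|\delta_0|^2(1+t)^{-\lambda/2-11/2}$ via the estimate on $\partial_x^2 h$ from Lemma \ref{Lemma a0.1}. This yields an inequality of the schematic form
\begin{equation}\notag
-\frac{1}{2}\frac{\rm d}{{\rm d}t}\int(1+t)^{2\lambda}p'(V_x+\tilde v)V_{xxx}^2\,dx+\frac{\alpha}{2}\int(1+t)^\lambda V_{xxt}^2\,dx\leq C\int(1+t)^{2\lambda-1}V_{xxx}^2\,dx+C(1+t)^{-b}(\|V_0\|_{H^2}^2+|\delta_0|^2),
\end{equation}
with $b>1$.

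Second, to close on $V_{xxx}$ itself I would take the once-differentiated equation $(\ref{a1.20})_x$, multiply by $-(1+t)^\lambda V_{xxx}$, and integrate over $\mathbb{R}$. As in \eqref{a3.40}-\eqref{a3.46}, integration by parts gives $\tfrac{\rm d}{{\rm d}t}\tfrac{\alpha}{2}\int V_{xx}^2\,dx$, the good term $-\int(1+t)^\lambda p'(\tilde v)V_{xxx}^2\,dx\geq C_0\int(1+t)^\lambda V_{xxx}^2\,dx$, and error terms containing $\partial_x^j\tilde v$ paired with lower $V$-derivatives and the nonlinear $F_{1x}$ contribution; all of these are handled by Cauchy-Schwarz with a small fraction $\tfrac{C_0}{10}(1+t)^\lambda V_{xxx}^2$ peeled off on the good side, by the smallness of $\epsilon,|\delta_0|$, and by Lemmas \ref{Lemma a1}-\ref{Lemma a2} to bound $\int_0^t$ of the $(1+t)^{-b}\|V_x\|^2$, $(1+t)^{-b}\|V_{xx}\|^2$ remainders. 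This produces
\begin{equation}\notag
\frac{\rm d}{{\rm d}t}\int\frac{\alpha}{2}V_{xx}^2\,dx+\frac{C_0}{2}\int(1+t)^\lambda V_{xxx}^2\,dx\leq C(1+t)^{-b}(\|V_0\|_{H^2}^2+|\delta_0|^2).
\end{equation}

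Finally I would combine the two inequalities. Adding a large multiple of the second to the first kills the bad $\int(1+t)^{2\lambda-1}V_{xxx}^2$ term (since $2\lambda-1<\lambda$ for $\lambda<1$, so it is dominated by the $(1+t)^\lambda V_{xxx}^2$ dissipation), and integrating in time over $(0,t)$ gives the un-weighted estimate $(1+t)^{2\lambda}\|V_{xxx}\|^2+\int_0^t(1+s)^\lambda(\|V_{xxt}\|^2+\|V_{xxx}\|^2)\,ds\leq C(\|V_0\|_{H^3}^2+|\delta_0|^2)$. The sharp time weights are then obtained, exactly as at the end of the proof of Lemma \ref{Lemma a2}, by multiplying the differentiated-twice inequality by $(1+t)^3$ and the differentiated-once inequality by $(1+t)^{3\lambda+2}$ (so that, when the $\tfrac{\rm d}{{\rm d}t}$ is expanded, the extra terms $\propto(1+t)^{3\lambda+1}\|V_{xx}\|^2$ and $\propto(1+t)^{2}\cdot(1+t)^{2\lambda}\|V_{xxx}\|^2$ are already controlled by the un-weighted estimate and the previous lemmas) and re-integrating; this yields $(1+t)^{3\lambda+3}\|V_{xxx}\|^2+\int_0^t\big((1+s)^{2\lambda+3}\|V_{xxt}\|^2+(1+s)^{3\lambda+2}\|V_{xxx}\|^2\big)\,ds\leq C(\|V_0\|_{H^3}^2+|\delta_0|^2)$. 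The main obstacle I anticipate is bookkeeping in step two of the nonlinear term: I must check that every term generated by $F_{1xx}$ either carries a factor $|V_x|$ or $|V_{xx}|$ in $L^\infty$ (absorbable by \eqref{axian}) or a factor $|\delta_0|$ with a sufficiently negative power of $(1+t)$ to be time-integrable after inserting the bounds from Lemmas \ref{Lemma a1}-\ref{Lemma a2}; the exponent $-\tfrac{5(\lambda+1)}{2}-(\lambda+1)k-2l$ in Lemma \ref{Lemma a0.1} is exactly what makes the $h$-contributions at this order harmless, so the verification is routine but must be done carefully to be sure the weight $(1+t)^{3\lambda+3}$ is indeed attainable.
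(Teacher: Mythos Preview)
Your proposal is correct and follows exactly the paper's approach: the paper itself only sketches the proof by saying ``performing $(\ref{a1.20})_{xx}\times (1+t)^{2\lambda}V_{xxt}$, $(\ref{a1.20})_{x}\times (1+t)^{\lambda} (-V_{xxx})$'' and appealing to the argument of Lemma~\ref{Lemma a2}, which is precisely your two-step scheme. One small slip: the final bootstrap multipliers should be $(1+t)^{2\lambda+2}$ on the once-differentiated inequality and $(1+t)^{\lambda+3}$ on the twice-differentiated one (mirroring the $(1+t)^{1+\lambda}$ and $(1+t)^{2}$ used at the end of Lemma~\ref{Lemma a2}), so that the commutator terms produced are exactly $(1+t)^{2\lambda+1}\|V_{xx}\|^2$ and $(1+t)^{3\lambda+2}\|V_{xxx}\|^2$, both already controlled.
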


\begin{lemma}\label{Lemma a5}
Under the assumptions of Theorem \ref{Thm a1}, if $\epsilon, |\delta_0|$ are small, it holds that
\begin{equation}\label{a3.51}
\begin{split}
 & (1+t)^{2}\|Z\|^2
        +
        (1+t)^{\lambda+3}\|Z_{x}\|^2+ \int_0^t \Big((1+s)^{3}\|Z_{t}\|^2+(1+s)^{\lambda+2}\|Z_{x}\|^2\Big)ds\\
       \leq   &
   C
   ( \|V_0\|_{H^2}^2+\|Z_0 \|_{H^1}^2+|\delta_0|^2).
\end{split}
\end{equation}

\end{lemma}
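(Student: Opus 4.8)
The plan is to prove the three parts of \eqref{a3.51} by exploiting $Z=V_t$ together with two ingredients: an algebraic (elliptic-type) identity for $Z$ coming from $(\ref{a1.19})_2$, and a weighted energy estimate for the equation obtained by differentiating \eqref{a1.20} in $t$. First I would get the pointwise bound for $\|Z\|$. From $(\ref{a1.19})_2$,
\[
\frac{\alpha}{(1+t)^\lambda}\,Z=-h-\big(p(V_x+\tilde v)-p(\tilde v)\big)_x
=-h-p'(V_x+\tilde v)V_{xx}-\big(p'(V_x+\tilde v)-p'(\tilde v)\big)\tilde v_x ,
\]
so that, using $|p'(V_x+\tilde v)-p'(\tilde v)|\le C|V_x|$,
\[
\|Z\|^2\le C(1+t)^{2\lambda}\Big(\|h\|^2+\|V_{xx}\|^2+\|\tilde v_x\|_{L^\infty}^2\|V_x\|^2\Big).
\]
By Lemma \ref{Lemma a0.1} one has $\|h\|^2\le C|\delta_0|^4(1+t)^{-\frac{5(\lambda+1)}{2}}$ and $\|\tilde v_x\|_{L^\infty}^2\le C|\delta_0|^2(1+t)^{-2(\lambda+1)}$, while Lemmas \ref{Lemma a1}--\ref{Lemma a2} give $\|V_x\|^2\le C(1+t)^{-(\lambda+1)}(\cdots)$ and $\|V_{xx}\|^2\le C(1+t)^{-(2\lambda+2)}(\cdots)$ with $(\cdots)=\|V_0\|_{H^2}^2+|\delta_0|^2$; the dominant term is $(1+t)^{2\lambda}\|V_{xx}\|^2\le C(1+t)^{-2}(\cdots)$, hence $(1+t)^2\|Z\|^2\le C\big(\|V_0\|_{H^2}^2+|\delta_0|^2\big)$.

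Next, I would differentiate \eqref{a1.20} in $t$ and set $W:=V_t=Z$, which gives
\[
\big(p'(\tilde v)W_x\big)_x+\frac{\alpha}{(1+t)^\lambda}W_t
=-F_{1t}-\big(p''(\tilde v)\tilde v_tV_x\big)_x+\frac{\alpha\lambda}{(1+t)^{\lambda+1}}W ,
\]
and then multiply by $(1+t)^{\lambda+3}W_t$ and integrate over $\mathbb R$. Integration by parts in $x$ on the elliptic term, together with the analogous treatment of the nonlinear part of $F_{1t}$ (write $F_1=h+\Phi_x$ with $\Phi=p(V_x+\tilde v)-p(\tilde v)-p'(\tilde v)V_x$; then $\int(1+t)^{\lambda+3}\Phi_{xt}W_t\,dx=-\int(1+t)^{\lambda+3}\Phi_tW_{xt}\,dx$, whose leading piece $\big(p'(V_x+\tilde v)-p'(\tilde v)\big)W_xW_{xt}=\tfrac12\big(p'(V_x+\tilde v)-p'(\tilde v)\big)\partial_t(W_x^2)$ combines with the elliptic contribution to form the positive definite energy $E(t):=\tfrac12\int(1+t)^{\lambda+3}\big(-p'(V_x+\tilde v)\big)W_x^2\,dx\ge c\,(1+t)^{\lambda+3}\|W_x\|^2$), produces $\frac{\rm d}{{\rm d}t}E(t)+\alpha\int(1+t)^3W_t^2\,dx=(\text{error})$. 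The error terms coming from differentiating the weight $(1+t)^{\lambda+3}$ are of the form $C(1+s)^{\lambda+2}\|W_x\|^2=C(1+s)^{\lambda+2}\|V_{xt}\|^2$, whose time integral is controlled by Lemma \ref{Lemma a2}; every term carrying a factor $\tilde v_t$, $\tilde v_{xt}$, $h$ or $h_t$ is made summable in time by the sharp decay of Lemma \ref{Lemma a0.1} (part being absorbed by $\alpha\int(1+t)^3W_t^2$ or by $E(t)$); and every residual product still containing the uncontrolled mixed derivative $W_{xt}=V_{xtt}$ — which only appears multiplied by $\tilde v_t$ — is removed by one integration by parts in $t$, the boundary term being absorbed into $E(t)$ up to a summable remainder. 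The genuinely delicate term is $\frac{\alpha\lambda}{(1+t)^{\lambda+1}}W\cdot(1+t)^{\lambda+3}W_t=\alpha\lambda(1+t)^2WW_t$: a naive Cauchy--Schwarz leaves $\int(1+s)\|W\|^2\,ds$, only logarithmically divergent in view of the sharp rate $\|W\|^2\sim(1+s)^{-2}$ from the first step, so instead I write $(1+t)^2WW_t=\tfrac12\frac{\rm d}{{\rm d}t}\big[(1+t)^2\|W\|^2\big]-(1+t)\|W\|^2$ and absorb the total derivative into the modified energy $E(t)-\tfrac{\alpha\lambda}{2}(1+t)^2\|W\|^2$, which by the first step is still coercive for $(1+t)^{\lambda+3}\|W_x\|^2$ up to an additive constant, while $-(1+t)\|W\|^2\le0$ is favorable. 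Integrating in $t$ over $[0,t]$ and using $W|_{t=0}=Z_0$ from \eqref{a1.20idata}, so that $E(0)\le C\|Z_0\|_{H^1}^2$, gives $(1+t)^{\lambda+3}\|Z_x\|^2+\int_0^t(1+s)^3\|Z_t\|^2\,ds\le C\big(\|V_0\|_{H^2}^2+\|Z_0\|_{H^1}^2+|\delta_0|^2\big)$.

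Combining these two steps with Lemma \ref{Lemma a2}, which already provides $\int_0^t(1+s)^{\lambda+2}\|V_{xt}\|^2\,ds=\int_0^t(1+s)^{\lambda+2}\|Z_x\|^2\,ds\le C(\cdots)$ since $Z_x=V_{xt}$, delivers \eqref{a3.51}. The main obstacle is the energy estimate of the second step: one must simultaneously keep the functional coercive after subtracting the negative term $-\tfrac{\alpha\lambda}{2}(1+t)^2\|W\|^2$ that the time-dependent damping forces on us — which is possible only because the first step furnishes the \emph{sharp} pointwise bound $(1+t)^2\|Z\|^2\le C$ — and dispose of the uncontrolled third-order derivative $V_{xtt}$ by an integration by parts in $t$ without losing a derivative; the latter works precisely because the nonlinear contribution to $F_{1t}$ involves only $V_{xt}=W_x$ (not $V_{xxt}$), so that the argument closes with merely $H^2$-information on $V_0$.
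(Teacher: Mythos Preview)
Your proposal is correct, but takes a route that differs from the paper's in two places, one of them an unnecessary detour.

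For the pointwise bound $(1+t)^2\|Z\|^2\le C$ you exploit the algebraic relation $(\ref{a1.19})_2$ directly to solve for $Z$ in terms of $h$, $V_x$, $V_{xx}$ and then feed in Lemmas \ref{Lemma a0.1}--\ref{Lemma a2}. The paper instead runs a weighted energy estimate: it multiplies \eqref{a3.52} by $(1+t)^\lambda Z$, derives a differential inequality of the form $\tfrac{d}{dt}\tfrac{\alpha}{2}\|Z\|^2+c(1+t)^\lambda\|Z_x\|^2\le C(1+t)^{-1}\|Z\|^2+(\text{decay})$, then multiplies through by $(1+t)^2$ and integrates, closing with Lemma \ref{Lemma a1} since $\int_0^t(1+s)\|Z\|^2\,ds=\int_0^t(1+s)\|V_t\|^2\,ds\le C$. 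Your algebraic shortcut is cleaner here and uses only the parabolic structure of \eqref{a1.19}. Likewise, you observe that $\int_0^t(1+s)^{\lambda+2}\|Z_x\|^2\,ds$ is literally the $V_{xt}$ integral already in Lemma \ref{Lemma a2}, whereas the paper recovers it from the same energy estimate above.

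For the $Z_t$--$Z_x$ estimate, however, you overcomplicate matters. You flag the cross term $\alpha\lambda(1+t)^2WW_t$ as ``genuinely delicate'' because a naive Cauchy--Schwarz leaves $\int_0^t(1+s)\|W\|^2\,ds$, which you call logarithmically divergent from the pointwise rate $\|W\|^2\lesssim(1+s)^{-2}$. But this integral is \emph{exactly} $\int_0^t(1+s)\|V_t\|^2\,ds$, already bounded in Lemma \ref{Lemma a1}; the paper simply absorbs it that way in \eqref{a3.53}. Your total-derivative reorganization $E(t)-\tfrac{\alpha\lambda}{2}(1+t)^2\|W\|^2$ works too, but is not needed. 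Similarly, the residual terms carrying $W_{xt}$ multiplied by $\tilde v_t$ can be handled by expanding the $x$-derivative and bounding directly against $\int(1+t)^3W_t^2$ plus summable remainders (as the paper does), rather than integrating by parts in $t$; your argument is valid but heavier than required.
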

\begin{proof}
Differentiating \eqref{a1.20} with respect to $t$ leads to
\begin{equation}\label{a3.52}
(p'(\tilde{v})V_x)_{xt}
           +\displaystyle
              \frac{\alpha}{(1+t)^\lambda} Z_t
              =
                \frac{\alpha\lambda }{(1+t)^{\lambda+1}}Z-F_{1t}.
\end{equation}
 Multiplying \eqref{a3.52} by $(1+t)^{\lambda}  Z$, from Lemma
\ref{Lemma a1} and after complicated calculations, we know
\begin{equation}\label{a3.54}
\begin{split}
&
\frac{\rm d}{{\rm d}t} \int
   \frac{\alpha}{2}  Z^2dx
  +
  \frac{C_0}{2}\int(1+t)^{\lambda} Z_x^2dx
  \\
\leq
 &
 C\int(1+t)^{  -1}  Z^2dx +
    C|\delta_0|^2 \int  (1+t)^{  -3} V_x^2 dx
        +
         C|\delta_0|^2
         (1+t)^{-\frac{\lambda}{2}-\frac{7}{2}}\\
\leq
 &
     C\int(1+t)^{  -1}  Z^2dx
     +
         C(1+t)^{-\frac{\lambda}{2}-\frac{7}{2}}
         ( \|V_0\|_{H^1}^2+|\delta_0|^2).
  \end{split}
\end{equation}
Integrating  \eqref{a3.54}$\times(1+t)^{2}$,  and using Lemma \ref{Lemma a1},    we get
\begin{equation}\notag
\begin{split}
  (1+t)^{2}\|Z\|^2
        + \int_0^t  (1+s)^{\lambda+2}\|Z_{x}\|^2 ds
       \leq
   C
   ( \|V_0\|_{H^1}^2+\|Z_0 \|^2+|\delta_0|^2).
\end{split}
\end{equation}

Similarly, multiplying \eqref{a3.52} by $(1+t)^{2\lambda} Z_t$, integrating
the resulting equality in $x$ over  $\mathbb{R}$, using Lemmas
\ref{Lemma a1}-\ref{Lemma a3},  the {\it a priori} assumption \eqref{axian}, and after tedious calculations, we have
\begin{equation}\label{a3.53}
\begin{split}
&
-\frac{1}{2}\frac{\rm d}{{\rm d}t} \int
(1+t)^{2\lambda}p'(V_x+\tilde v)Z_x^2dx
  + \frac{\alpha}{2}\int(1+t)^{\lambda} Z_t^2dx
  \\
\leq&
    C \int  (1+t)^{\lambda-2} Z^2 dx +
    C \int  (1+t)^{2\lambda-1} Z_x^2 dx
   +
    C|\delta_0|^2 \int  (1+t)^{\lambda-4} V_x^2 dx\\
   &+
    C|\delta_0|^2 \int  (1+t)^{2\lambda-3} V_{xx}^2 dx
        +
         C|\delta_0|^2(1+t)^{\frac{\lambda}{2}-\frac{9}{2}}
  \\
\leq
        &C \int  (1+t)^{\lambda-2} Z^2 dx +
    C \int  (1+t)^{2\lambda-1} Z_x^2 dx
        +
         C(1+t)^{\frac{\lambda}{2}-\frac{9}{2}}( \|V_0\|_{H^2}^2+\|Z_0 \|_{H^1}^2+|\delta_0|^2).
\end{split}
\end{equation}
Integrating  \eqref{a3.53}$\times(1+t)^{3-\lambda}$,  and using Lemma \ref{Lemma a1}-\ref{Lemma a3}, we  complete the proof of
Lemma \ref{Lemma a5}.
\end{proof}

Similar to the proof of Lemma \ref{Lemma a5}, performing $(\ref{a1.20})_{tx}\times (1+t)^{\lambda} Z_{x}$, $(\ref{a1.20})_{tx}\times (1+t)^{2\lambda}Z_{xt}$ integrating the
resulting  equations with respect to $x$ over $\mathbb{R} $, we can get the
following estimate.
\begin{lemma}\label{Lemma a6}
Under the assumptions of Theorem \ref{Thm a1},  if $\epsilon, |\delta_0|$ are small, it holds that
\begin{equation}\notag
\begin{split}
 &
        (1+t)^{2\lambda+4}\|Z_{xx}\|^2+ \int_0^t \Big((1+s)^{\lambda+4}\|Z_{xt}\|^2+(1+s)^{2\lambda+3}\|Z_{xx}\|^2\Big)ds\\
       \leq   &
   C
   ( \|V_0\|_{H^3}^2+\|Z_0 \|_{H^2}^2+|\delta_0|^2).
\end{split}
\end{equation}
\end{lemma}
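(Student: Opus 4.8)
The plan is to run the same weighted energy argument used for Lemma~\ref{Lemma a5}, one spatial derivative higher. Since $V_t=Z$, the quantities $V_{xt}=Z_x$, $V_{xxt}=Z_{xx}$ and $V_{xtt}=Z_{xt}$ are all controlled by Theorem~\ref{Thm a1} and the earlier lemmas, while $Z_{xxx}=V_{xxxt}$ is \emph{not}; so the computation must be arranged so that, after integration by parts, the top-order derivative always falls on a factor that is actually controlled. First I would differentiate \eqref{a1.20} once in $t$ and once in $x$ (equivalently, differentiate \eqref{a3.52} in $x$) to obtain
\begin{equation}\notag
(p'(\tilde v)V_x)_{xxt}+\frac{\alpha}{(1+t)^\lambda}\,Z_{xt}=\frac{\alpha\lambda}{(1+t)^{\lambda+1}}\,Z_x-F_{1tx}.
\end{equation}

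Next I would multiply this identity by $(1+t)^\lambda Z_x$ and integrate over $\mathbb{R}$. One integration by parts turns the top-order term into $-\int(p'(\tilde v)V_x)_{xt}(1+t)^\lambda Z_{xx}\,dx$, whose leading piece $-\int p'(\tilde v)(1+t)^\lambda Z_{xx}^2\,dx\ge C_0(1+t)^\lambda\|Z_{xx}\|^2$ is the dissipation, and $\frac{\alpha}{(1+t)^\lambda}Z_{xt}\cdot(1+t)^\lambda Z_x=\tfrac{\alpha}{2}\partial_t(Z_x^2)$ furnishes the time-derivative term. Every remaining contribution either carries a derivative of $\tilde v$ (controlled by the sharp decay of Lemma~\ref{Lemma a0.1}), a factor $V_x$ or $p'(V_x+\tilde v)-p'(\tilde v)=O(|V_x|)$ from the nonlinearity (absorbed by the smallness of $\epsilon,|\delta_0|$), or is a lower-order weighted norm of $Z_x,V_x,V_{xx}$ already controlled by Lemmas~\ref{Lemma a1}--\ref{Lemma a3} and \ref{Lemma a5}; in particular $F_{1tx}$ is handled exactly as $F_{1t}$ was in Lemma~\ref{Lemma a5}, after one further integration by parts so no $Z_{xxx}$ survives. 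This yields a differential inequality of the form
\begin{equation}\notag
\frac{\mathrm d}{\mathrm dt}\,\frac{\alpha}{2}\|Z_x\|^2+\frac{C_0}{2}(1+t)^\lambda\|Z_{xx}\|^2\le C(1+t)^{-1}\|Z_x\|^2+C|\delta_0|^2(1+t)^{-3}\big(\|V_x\|^2+\|V_{xx}\|^2\big)+C|\delta_0|^2(1+t)^{-\frac{\lambda}{2}-\frac{9}{2}}.
\end{equation}
Multiplying by $(1+t)^{\lambda+3}$, integrating in $t$, and bounding the right-hand side by $C$ through Lemmas~\ref{Lemma a1}--\ref{Lemma a3} and \ref{Lemma a5} (the forcing term is integrable against $(1+t)^{\lambda+3}$ precisely because $0\le\lambda<1$) gives $(1+t)^{\lambda+3}\|Z_x\|^2+\int_0^t(1+s)^{2\lambda+3}\|Z_{xx}\|^2\,ds\le C(\|V_0\|_{H^3}^2+\|Z_0\|_{H^2}^2+|\delta_0|^2)$.

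Then I would multiply the same differentiated identity by $(1+t)^{2\lambda}Z_{xt}$ and integrate. Here $\frac{\alpha}{(1+t)^\lambda}Z_{xt}\cdot(1+t)^{2\lambda}Z_{xt}=\alpha(1+t)^\lambda Z_{xt}^2$ is the dissipation; the top-order term, after one integration by parts and after combining the leading nonlinear contribution of $-F_{1tx}$ with $(p'(\tilde v)V_x)_{xt}$ — which upgrades the coefficient $p'(\tilde v)$ to $p'(V_x+\tilde v)$ exactly as in the passage to \eqref{a3.53} — produces $-\tfrac12\frac{\mathrm d}{\mathrm dt}\int(1+t)^{2\lambda}p'(V_x+\tilde v)Z_{xx}^2\,dx$ plus harmless terms, leading to
\begin{equation}\notag
-\frac12\frac{\mathrm d}{\mathrm dt}\int(1+t)^{2\lambda}p'(V_x+\tilde v)Z_{xx}^2\,dx+\frac{\alpha}{2}(1+t)^\lambda\|Z_{xt}\|^2\le C(1+t)^{2\lambda-1}\|Z_{xx}\|^2+C(1+t)^{\lambda-2}\|Z_x\|^2+C|\delta_0|^2(1+t)^{-\frac{\lambda}{2}-\frac{13}{2}},
\end{equation}
where I have suppressed further $|\delta_0|^2$-multiples of weighted norms of $V_x,V_{xx},V_{xxx}$ controlled by Lemmas~\ref{Lemma a1}--\ref{Lemma a3}. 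Multiplying by $(1+t)^4$, integrating in $t$, and using $-p'(V_x+\tilde v)\ge C_0>0$ so that the left-hand side dominates $(1+t)^{2\lambda+4}\|Z_{xx}\|^2$ and $\int_0^t(1+s)^{\lambda+4}\|Z_{xt}\|^2\,ds$, while the quantities $\int_0^t(1+s)^{2\lambda+3}\|Z_{xx}\|^2\,ds$ and $\int_0^t(1+s)^{\lambda+2}\|Z_x\|^2\,ds$ that arise are bounded by the first step and by Lemma~\ref{Lemma a5}, gives precisely the claimed estimate.

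The real difficulty is not any single computation but the bookkeeping: one must fully expand $(p'(\tilde v)V_x)_{xxt}$ and $F_{1tx}$, integrate by parts so that no $Z_{xxx}$ or $V_{xxxt}$ ever survives, and verify that each of the numerous weighted error terms is either absorbed into the dissipation once $\epsilon,|\delta_0|$ are small, or integrable in time against the weights $(1+t)^{\lambda+3}$ and $(1+t)^4$ via Lemmas~\ref{Lemma a1}--\ref{Lemma a3} and \ref{Lemma a5}. Getting the powers of $(1+t)$ to close consistently — which is exactly where the restriction $0\le\lambda<1$ is used — is the point that demands care.
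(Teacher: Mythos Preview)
Your proposal is correct and follows essentially the same route as the paper: the paper's proof sketch says precisely to take $(\ref{a1.20})_{tx}$, multiply by $(1+t)^\lambda Z_x$ and by $(1+t)^{2\lambda}Z_{xt}$, and integrate over $\mathbb{R}$, which is exactly what you do. Your additional care about arranging integrations by parts so that no $Z_{xxx}=V_{xxxt}$ survives, and your choice of the final time-weights $(1+t)^{\lambda+3}$ and $(1+t)^4$ to close against Lemmas~\ref{Lemma a1}--\ref{Lemma a3} and~\ref{Lemma a5}, are consistent with the paper's pattern (cf.\ the multiplication by $(1+t)^{3-\lambda}$ in the proof of Lemma~\ref{Lemma a5}).
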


\begin{lemma}\label{Lemma a7}
Under the assumptions of Theorem \ref{Thm a1},  if $\epsilon, |\delta_0|$ are small, it holds that
\begin{equation}\notag
\begin{split}
 & (1+t)^{4}\|Z_t\|^2
        +
        (1+t)^{\lambda+5}\|Z_{xt}\|^2+ \int_0^t  \Big( (1+s)^{\lambda+4}\|Z_{xt}\|^2+(1+s)^{5}\|Z_{tt}\|^2\Big)ds\\
       \leq   &
   C
   ( \|V_0\|_{H^3}^2+\|Z_0 \|_{H^2}^2+|\delta_0|^2).
\end{split}
\end{equation}
\end{lemma}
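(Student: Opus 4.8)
The plan is to imitate the scheme of Lemmas \ref{Lemma a5}--\ref{Lemma a6}, now working one order higher in the time variable, and to split the statement into two pieces: the pointwise bound for $\|Z_t\|$, which I would obtain \emph{algebraically}, and the bound for $\|Z_{xt}\|$ together with the dissipation $\int_0^t(1+s)^5\|Z_{tt}\|^2\,ds$, which requires a genuine weighted energy estimate. Recall $V_t=Z$, so that $V_{tt}=Z_t$, $V_{xtt}=Z_{xt}$, $V_{ttt}=Z_{tt}$; differentiating \eqref{a1.20} once in $t$ gives \eqref{a3.52}, and differentiating twice in $t$ gives
\[
(p'(\tilde v)V_x)_{xtt}+\frac{\alpha}{(1+t)^\lambda}Z_{tt}=\frac{2\alpha\lambda}{(1+t)^{\lambda+1}}Z_t-\frac{\alpha\lambda(\lambda+1)}{(1+t)^{\lambda+2}}Z-(F_1)_{tt}.
\]
Throughout I would use the decay of $\tilde v$ and $h$ from Lemma \ref{Lemma a0.1}, the decay of $V$, $Z$ and their spatial derivatives from Lemmas \ref{Lemma a1}--\ref{Lemma a6}, and the {\it a priori} assumption \eqref{axian}.

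For the pointwise bound $(1+t)^4\|Z_t\|^2\le C(\cdots)$ I would not attempt an energy identity (the forcing term $\frac{\alpha\lambda}{(1+t)^{\lambda+1}}Z$ would cost a logarithm once the weight $(1+t)^4$ is attached); instead I would solve \eqref{a3.52} for $Z_t$,
\[
Z_t=\frac{\lambda}{1+t}Z-\frac{(1+t)^\lambda}{\alpha}\big[(p'(\tilde v)V_x)_{xt}+(F_1)_t\big],
\]
and observe that, after expanding $(p'(\tilde v)V_x)_{xt}=(p'(\tilde v)Z_x)_x+(p''(\tilde v)\tilde v_tV_x)_x$ and $(F_1)_t$, every term on the right is a product of a bounded or decaying coefficient with $Z$, $Z_x$, $Z_{xx}$, $V_x$, $V_{xx}$ (the coefficient of the top-order term $Z_{xx}$ being $p'(\tilde v)$, which is bounded, or $p'(V_x+\tilde v)-p'(\tilde v)=O(\|V_x\|_{L^\infty})=O(\epsilon)$). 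Taking $L^2$ norms and inserting $\|Z\|\lesssim(1+t)^{-1}$, $\|Z_x\|\lesssim(1+t)^{-(\lambda+3)/2}$, $\|Z_{xx}\|\lesssim(1+t)^{-(\lambda+2)}$ (Lemmas \ref{Lemma a5}--\ref{Lemma a6}), $\|V_x\|\lesssim(1+t)^{-(\lambda+1)/2}$, $\|V_{xx}\|\lesssim(1+t)^{-(\lambda+1)}$ (Lemmas \ref{Lemma a1}--\ref{Lemma a2}) and $\|h_t\|\lesssim|\delta_0|^2(1+t)^{-5(\lambda+1)/4-1}$ (Lemma \ref{Lemma a0.1}), the extra factor $(1+t)^\lambda$ is absorbed and one gets $\|Z_t\|\lesssim(1+t)^{-2}\big(\|V_0\|_{H^3}^2+\|Z_0\|_{H^2}^2+|\delta_0|^2\big)^{1/2}$.

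For $(1+t)^{\lambda+5}\|Z_{xt}\|^2+\int_0^t(1+s)^5\|Z_{tt}\|^2\,ds\le C(\cdots)$ I would multiply the twice-differentiated equation above by $(1+t)^{2\lambda}Z_{tt}$ and integrate over $\mathbb R$. After integrating by parts in $x$, the principal term $(p'(\tilde v)V_x)_{xtt}$ --- together with the leading part of $(F_1)_{tt}$ --- produces the correct quadratic stored energy $-\tfrac12\frac{d}{dt}\int(1+t)^{2\lambda}p'(V_x+\tilde v)Z_{xt}^2\,dx\sim(1+t)^{2\lambda}\|Z_{xt}\|^2$, plus lower-order pieces $\lesssim(1+t)^{2\lambda-1}\|Z_{xt}\|^2+|\delta_0|(1+t)^{3\lambda/2-3/2}\|Z_{xt}\|^2$ coming from $\partial_t$ hitting the weight and the coefficients; the damping term produces $\alpha\int(1+t)^\lambda Z_{tt}^2\,dx$; and the remaining forcing is handled by Young's inequality, putting the top-order pieces into a small fraction of $\int(1+t)^\lambda Z_{tt}^2\,dx$ and leaving $C(1+t)^{\lambda-2}\|Z_t\|^2+C(1+t)^{\lambda-4}\|Z\|^2$ plus fast-decaying terms. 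Multiplying by $(1+t)^{5-\lambda}$ and integrating in time, the residual time-integrals are $\int_0^t(1+s)^{\lambda+4}\|Z_{xt}\|^2\,ds$ (bounded by Lemma \ref{Lemma a6}), $\int_0^t(1+s)^3\|Z_t\|^2\,ds$ (bounded by Lemma \ref{Lemma a5}), and lower-order ones in $\|Z\|^2$, $\|V_x\|^2$, $\|V_{xx}\|^2$ (bounded by Lemmas \ref{Lemma a1}--\ref{Lemma a6}), so the inequality closes; adding the two pieces, and recalling that $\int_0^t(1+s)^{\lambda+4}\|Z_{xt}\|^2\,ds$ is already supplied by Lemma \ref{Lemma a6}, gives the claim.

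The step I expect to be the main obstacle is the bookkeeping of $(F_1)_{tt}$. With $F_1=h+\big(p(V_x+\tilde v)-p(\tilde v)-p'(\tilde v)V_x\big)_x$, the part $h_{tt}$ is harmless by Lemma \ref{Lemma a0.1} (it is $O(|\delta_0|^2(1+t)^{-5(\lambda+1)/2-4})$ in $L^2$), but $\partial_t^2\big(p(V_x+\tilde v)-p(\tilde v)-p'(\tilde v)V_x\big)_x$, once paired (after one integration by parts) with $Z_{xt}$ or $Z_{tt}$, expands by the chain rule into a large number of monomials, some of which involve the a priori uncontrolled derivatives $V_{xxtt}=Z_{xxt}$ and $V_{xttt}=Z_{xtt}$. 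The plan is to reorganize these: the monomials carrying the non-small coefficient $p'(V_x+\tilde v)$ on such a top-order derivative recombine, via $\partial_t(\cdot^2)$, with the principal term to form exactly the quadratic energy $\sim\int(1+t)^{2\lambda}\big(-p'(V_x+\tilde v)\big)Z_{xt}^2\,dx$ (precisely as in Lemmas \ref{Lemma a5}--\ref{Lemma a6}); the monomials carrying the small factor $p'(V_x+\tilde v)-p'(\tilde v)=O(\|V_x\|_{L^\infty})$, controlled by \eqref{axian}, or a decaying derivative of $\tilde v$, controlled by Lemma \ref{Lemma a0.1}, are absorbed into the dissipation; and the rest are quadratic in quantities already controlled by Lemmas \ref{Lemma a1}--\ref{Lemma a6}. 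A secondary point is to bound the initial values $\|Z_t(\cdot,0)\|$ and $\|Z_{xt}(\cdot,0)\|$ by $\|V_0\|_{H^3}+\|Z_0\|_{H^2}+|\delta_0|$, which is done as in \cite{Cui-Yin-Zhang-Zhu2016} by using \eqref{a1.19}--\eqref{a1.20} to trade the time derivatives at $t=0$ for spatial derivatives of $V_0$, $Z_0$ and $\tilde v(\cdot,0)$; since the whole computation lives at the edge of the regularity class $V_t\in C^k((0,\infty),H^{2-k})$, it is justified by the standard mollification/difference-quotient argument.
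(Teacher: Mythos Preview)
Your proposal is correct, and the second half (multiply the twice-$t$-differentiated equation by $(1+t)^{2\lambda}Z_{tt}$, then by $(1+t)^{5-\lambda}$, and close using Lemmas \ref{Lemma a1}--\ref{Lemma a6}) is exactly the paper's argument leading to \eqref{a3.59}. The first half, however, differs: the paper does \emph{not} obtain $(1+t)^4\|Z_t\|^2$ algebraically, but by a genuine energy identity --- it multiplies \eqref{a3.70} by $(1+t)^{\lambda}Z_t$, writes the forcing $\frac{\alpha\lambda(\lambda+1)}{(1+t)^{\lambda+2}}Z\cdot(1+t)^\lambda Z_t$ as $\frac{d}{dt}\!\int\frac{\alpha\lambda(\lambda+1)}{2}(1+t)^{-2}Z^2$ plus a residue $\sim(1+t)^{-3}\|Z\|^2$, and then multiplies by $(1+t)^4$. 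Your worry that this would produce a logarithm is unfounded: after the $(1+t)^4$ weight the residual integral is $\int_0^t(1+s)\|Z\|^2\,ds=\int_0^t(1+s)\|V_t\|^2\,ds$, which is already bounded by Lemma \ref{Lemma a1}, and the $(1+s)^3\|Z_t\|^2$ term is bounded by Lemma \ref{Lemma a5}. As a by-product the paper's identity also yields $\int_0^t(1+s)^{\lambda+4}\|Z_{xt}\|^2\,ds$, which you instead import from Lemma \ref{Lemma a6}. Both routes are legitimate; your algebraic shortcut is slightly quicker and avoids one energy computation, while the paper's route is more self-contained and parallels Lemma \ref{Lemma a5} verbatim.
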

\begin{proof}
Differentiating \eqref{a3.52} with respect to $t$, we get
\begin{equation}\label{a3.70}
(p'(\tilde{v})V_x)_{xtt}
           +\displaystyle
              \frac{\alpha}{(1+t)^\lambda} Z_{tt}+\frac{\alpha \lambda (\lambda+1)}{(1+t)^{\lambda+2}} Z
              =
                \frac{2\alpha\lambda }{(1+t)^{\lambda+1}}Z_t-F_{1tt}.
\end{equation}
 Multiplying \eqref{a3.70} by $(1+t)^{\lambda}  Z_t$, from Lemma
\ref{Lemma a5} and after complicated calculations, we have
\begin{equation}\label{a3.54-}
\begin{split}
&
\frac{\rm d}{{\rm d}t}
\int\bigg(
   \frac{\alpha}{2}  Z_t^2
   +
   \frac{\alpha \lambda(\lambda+1)}{2}(1+t)^{  -2}Z^2\bigg)dx
  +
  \frac{C_0}{2}\int(1+t)^{\lambda} Z_{xt}^2dx
  \\
\leq
 &
 C\int(1+t)^{  -1}  Z_t^2dx + C\int (1+t)^{  -3}  Z^2dx
   + C|\delta_0|^2 \int  (1+t)^{  -5} V_x^2 dx\\
   &+ C|\delta_0|^2 \int  (1+t)^{\lambda-2} Z_{x}^2 dx
        +
         C|\delta_0|^2(1+t)^{-\frac{\lambda}{2}-\frac{11}{2}}\\
\leq
 &
     C\int(1+t)^{  -1}  Z_t^2dx+C\int (1+t)^{  -3}  Z^2dx+ C|\delta_0|^2 \int  (1+t)^{\lambda-2} Z_{x}^2 dx\\
     &
     +
         C(1+t)^{-\frac{\lambda}{2}-\frac{11}{2}}
         ( \|V_0\|_{H^2}^2+\|Z_0\|_{H^1}^2+|\delta_0|^2).
  \end{split}
\end{equation}
Integrating  \eqref{a3.54-}$\times(1+t)^{4}$,  and using Lemmas \ref{Lemma a5}-\ref{Lemma a6},  we get
\begin{equation}\notag
\begin{split}
 (1+t)^{4}\|Z_t\|^2
        + \int_0^t  (1+s)^{\lambda+4}\|Z_{xt}\|^2ds
       \leq
   C
   ( \|V_0\|_{H^3}^2+\|Z_0 \|_{H^2}^2+|\delta_0|^2).
   \end{split}
\end{equation}
Similarly, multiplying \eqref{a3.70} by $(1+t)^{2\lambda} Z_{tt}$, integrating
the resulting equality in $x$ over  $\mathbb{R}$, using Lemmas
\ref{Lemma a1}-\ref{Lemma a6}, and after tedious calculations, we have
\begin{equation}\label{a3.59}
\begin{split}
&
-\frac{1}{2}\frac{\rm d}{{\rm d}t} \int
(1+t)^{2\lambda}p'(V_x+\tilde v)Z_{xt}^2dx
  + \frac{\alpha}{2}\int(1+t)^{\lambda} Z_{tt}^2dx
  \\
\leq
&
    C \int  (1+t)^{\lambda-4} Z^2 dx+C \int  (1+t)^{\lambda-2} Z_t^2 dx
    +
    C \int  (1+t)^{2\lambda-1} Z_{xt}^2 dx  \\
    &
    +
    C|\delta_0|^2 \int  (1+t)^{\lambda-6} V_{x}^2 dx+
    C|\delta_0|^2 \int  (1+t)^{\lambda-4} V_{xt}^2 dx
+
    C|\delta_0|^2 \int  (1+t)^{2\lambda -3} Z_{xx}^2 dx\\
    &
    +C|\delta_0|^2 \int  (1+t)^{2\lambda -5} V_{xx}^2 dx
        +
         C|\delta_0|^2(1+t)^{\frac{\lambda}{2}-\frac{13}{2}}
  \\
\leq
        &  C \int  (1+t)^{\lambda-4} Z^2 dx+C \int  (1+t)^{\lambda-2} Z_t^2 dx
    +
    C \int  (1+t)^{2\lambda-1} Z_{xt}^2 dx\\
        &+
         C(1+t)^{\frac{\lambda}{2}-\frac{13}{2}}
         ( \|V_0\|_{H^2}^2+\|Z_0 \|_{H^1}^2+|\delta_0|^2).
\end{split}
\end{equation}
Integrating  \eqref{a3.59}$\times(1+t)^{5-\lambda}$,  and using Lemmas \ref{Lemma a2}-\ref{Lemma a6}, we complete the proof of
Lemma \ref{Lemma a7}.
\end{proof}

With Lemmas \ref{Lemma a1}-\ref{Lemma a7} in hand, by the Sobolev inequality and $0\leq \lambda<1$, it's easy to  know that
\begin{equation}\notag
\begin{split}
       \| V_x(\cdot,t)\|_{L^\infty}
       &\leq
   C(1+t)^{-\frac{3(\lambda+1)}{4}}( \|V_0\|_{H^3}+\|Z_0 \|_{H^2}+|\delta_0|)
\leq \frac{\epsilon}{2},
   \end{split}
\end{equation}
  \begin{equation}\notag
  \begin{split}
       \| V_{xt}(\cdot,t)\|_{L^\infty}&\leq
   C(1+t)^{-\frac{3\lambda+7}{4}}( \|V_0\|_{H^3}+\|Z_0 \|_{H^2}+|\delta_0|  )
 \leq \frac{\epsilon}{2}(1+t)^{-1},
   \end{split}
\end{equation}
and
  \begin{equation}\notag
  \begin{split}
       \| V_{xx}(\cdot,t)\|_{L^\infty}&\leq
   C(1+t)^{-\frac{5\lambda+5}{4}}( \|V_0\|_{H^3}+\|Z_0 \|_{H^2}+|\delta_0| )
 \leq \frac{\epsilon}{2}(1+t)^{-\frac{\lambda+1}{2}},
   \end{split}
\end{equation}
provided $\|V_0\|_{H^3}+\|Z_0 \|_{H^2}+|\delta_0| \ll 1$. Up to now, we thus close the {\it a priori} assumption \eqref{axian}  about $(V_x, V_{xt}, V_{xx})$ from  Lemmas \ref{Lemma a1}-\ref{Lemma a6}. The proof of Theorem \ref{Thm a1} is completed.

Furthermore, if $V_0\in H^5(\mathbb{R})$, $Z_0\in H^4(\mathbb{R})$, we can get the following lemmas.

\begin{lemma}\label{Lemma a8}
Under the assumptions of Theorem \ref{Thm a1}, if $\epsilon, |\delta_0|$ are small, it holds that
\begin{equation}\notag%\label{a3.31}
\begin{split}
(1+t)^{ 4\lambda+4}\|\partial_x^4V \|^2+ \int_0^t \Big((1+s)^{3\lambda+4}\|\partial_x^3Z\|^2+(1+s) ^{4\lambda+3} \|\partial_x^4 V \|^2)\Big) ds
       \leq
   C ( \|V_0\|_{H^4}^2+|\delta_0|^2).
\end{split}
\end{equation}
\end{lemma}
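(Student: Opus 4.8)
The plan is to follow the scheme of Lemmas~\ref{Lemma a2} and \ref{Lemma a3}, carried out one derivative level higher. First I would differentiate \eqref{a1.20} three times in $x$, obtaining $(p'(\tilde v)V_x)_{xxxx}+\frac{\alpha}{(1+t)^\lambda}\partial_x^3 V_t=-\partial_x^3 F_1$, and twice in $x$, obtaining $(p'(\tilde v)V_x)_{xxx}+\frac{\alpha}{(1+t)^\lambda}V_{xxt}=-F_{1xx}$. Then I would run the two weighted energy identities obtained by multiplying the first equation by $(1+t)^{2\lambda}\partial_x^3 V_t$ and the second by $-(1+t)^{\lambda}\partial_x^4 V$, integrating over $\mathbb{R}$ and integrating by parts. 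Since $-p'(\tilde v)\ge C_0>0$, the first identity gives, up to lower order terms,
\[
-\frac{1}{2}\frac{\mathrm{d}}{\mathrm{d}t}\int (1+t)^{2\lambda}p'(\tilde v)(\partial_x^4 V)^2\,dx+\alpha\int(1+t)^{\lambda}(\partial_x^3 V_t)^2\,dx\le(\text{lower order terms}),
\]
and the second gives
\[
\frac{\mathrm{d}}{\mathrm{d}t}\,\frac{\alpha}{2}\int(\partial_x^3 V)^2\,dx+\frac{C_0}{2}\int(1+t)^{\lambda}(\partial_x^4 V)^2\,dx\le(\text{lower order terms}).
\]

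The lower order terms split, as in Lemmas~\ref{Lemma a2}--\ref{Lemma a3}, into three types, each handled in the established way. First, the contributions of $\partial_t[(1+t)^{2\lambda}p'(\tilde v)]$ and of the commutators between $\partial_x^k$ and multiplication by $p'(\tilde v)$, which are products of $\tilde v_x,\tilde v_{xx},\tilde v_{xxx}$ with $V$-derivatives of order $\le 4$; by Lemma~\ref{Lemma a0.1} each $\tilde v$-factor carries a decaying weight, so Cauchy--Schwarz together with Lemmas~\ref{Lemma a1}--\ref{Lemma a7} and the new dissipation absorbs them. Second, the part of $\partial_x^3 F_1$ and $F_{1xx}$ coming from $h$, bounded via the estimate $\int|\partial_x^k h|^2\le C\delta_0^4(1+t)^{-\frac{5(\lambda+1)}{2}-(\lambda+1)k}$ of Lemma~\ref{Lemma a0.1}, which is integrable in time with room to spare. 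Third, the genuinely nonlinear part $\partial_x^4\big(p(V_x+\tilde v)-p(\tilde v)-p'(\tilde v)V_x\big)$ (and $\partial_x^3$ of it): here one integrates by parts once to transfer a derivative onto the test function, so that the top contribution becomes $\big(p'(V_x+\tilde v)-p'(\tilde v)\big)(\partial_x^4 V)^2$, respectively $\big(p'(V_x+\tilde v)-p'(\tilde v)\big)\partial_x^4 V\,\partial_x^4 V_t$, the former being an $O(\epsilon)$ multiple of the coercive term and the latter, up to a perfect $t$-derivative of a modified energy, again such a term; the a priori assumption \eqref{axian} makes both absorbable, while the remaining products of lower-order $V$-derivatives decay by Lemmas~\ref{Lemma a1}--\ref{Lemma a3}.

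Combining the two identities with suitable constants and invoking Lemmas~\ref{Lemma a1}--\ref{Lemma a7} closes the estimate at the base weight,
\[
(1+t)^{2\lambda}\|\partial_x^4 V\|^2+\int_0^t(1+s)^{\lambda}\big(\|\partial_x^3 V_t\|^2+\|\partial_x^4 V\|^2\big)\,ds\le C\big(\|V_0\|_{H^4}^2+|\delta_0|^2\big).
\]
Then, exactly as in the concluding steps of Lemmas~\ref{Lemma a2} and \ref{Lemma a3}, I would multiply the two differential inequalities successively by increasing powers of $(1+t)$ and re-feed the lower-weight bounds together with Lemmas~\ref{Lemma a1}--\ref{Lemma a7} to climb to the stated conclusions $(1+t)^{4\lambda+4}\|\partial_x^4 V\|^2$ and $\int_0^t\big((1+s)^{3\lambda+4}\|\partial_x^3 Z\|^2+(1+s)^{4\lambda+3}\|\partial_x^4 V\|^2\big)\,ds$, recalling $Z=V_t$. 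This step uses the assumptions $V_0\in H^5(\mathbb{R})$, $Z_0\in H^4(\mathbb{R})$ and the full regularity of $\tilde v$ (hence of $h$) furnished by Lemma~\ref{Lemma a0.1}.

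The step I expect to be the main obstacle is the bookkeeping of the top-order nonlinear term: in raw form $\partial_x^3 F_1$ contains $\partial_x^5 V$, and after the single integration by parts one is left with products of high-order $V$-derivatives paired with $\partial_x^4 V_t$, which sits exactly at the boundary of what the scheme controls. Organizing all of these into exact $t$-derivatives of modified-energy functionals carrying small $(\epsilon,|\delta_0|)$ coefficients, plus genuinely lower-order remainders, so that no uncontrolled quantity survives, is the delicate part; everything else is a routine, if lengthy, repetition of the computations in Lemmas~\ref{Lemma a2}--\ref{Lemma a7}.
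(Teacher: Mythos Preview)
Your proposal is correct and follows essentially the same approach as the paper: the paper does not spell out the proof of Lemma~\ref{Lemma a8} but merely states it as a consequence of the same scheme, and the natural instantiation is precisely your pair of multipliers $(\ref{a1.20})_{xxx}\times(1+t)^{2\lambda}\partial_x^3 V_t$ and $(\ref{a1.20})_{xx}\times(-(1+t)^{\lambda}\partial_x^4 V)$, together with the weight-bootstrapping step. One small remark: for this particular lemma you only need $V_0\in H^4$ (as the final bound reflects) and only the $V$-estimates of Lemmas~\ref{Lemma a1}--\ref{Lemma a3}; the extra regularity $V_0\in H^5$, $Z_0\in H^4$ and Lemmas~\ref{Lemma a5}--\ref{Lemma a7} are not actually required here, though invoking them does no harm.
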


\begin{lemma}\label{Lemma a9}
Under the assumptions of Theorem \ref{Thm a1}, if $\epsilon, |\delta_0|$ are small, it holds that
\begin{equation}\notag%\label{a3.31}
\begin{split}
(1+t)^{ 5\lambda+5}\|\partial_x^5V \|^2+ \int_0^t \Big((1+s)^{4\lambda+5}\|\partial_x^4Z\|^2+(1+s) ^{5\lambda+4} \|\partial_x^5 V \|^2)\Big) ds
       \leq
   C ( \|V_0\|_{H^5}^2+|\delta_0|^2).
\end{split}
\end{equation}
\end{lemma}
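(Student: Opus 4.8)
The plan is to follow the proof of Lemma \ref{Lemma a8} one $x$-derivative higher. I would run two weighted energy identities for $\partial_x^5 V$ and then close them by the standard time-weight iteration, using the bounds of Lemmas \ref{Lemma a1}--\ref{Lemma a8}, the smallness of $\epsilon$ and $|\delta_0|$, and the (now established) a priori bound \eqref{axian}; every term carrying a derivative of $\tilde v$ is controlled through Lemma \ref{Lemma a0.1}, exactly as the terms $I_1,\dots,I_6$ and the manipulation \eqref{a3.34}--\eqref{a3.37} in the proof of Lemma \ref{Lemma a2}. First I would apply $\partial_x^4$ to $(\ref{a1.20})$, multiply by $(1+t)^{2\lambda}\partial_x^4 V_t$ and integrate over $\mathbb{R}$. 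Integrating by parts once in the elliptic term pairs $\partial_x^5 V_t$ with $\partial_x^4(p'(\tilde v)V_x)$, whose leading term is $p'(\tilde v)\partial_x^5 V$; this produces the principal part $-\frac{1}{2}\frac{\rm d}{{\rm d}t}\int(1+t)^{2\lambda}p'(\tilde v)(\partial_x^5 V)^2\,dx$, the damping term produces the dissipation $\alpha\int(1+t)^{\lambda}(\partial_x^4 V_t)^2\,dx=\alpha\int(1+t)^{\lambda}(\partial_x^4 Z)^2\,dx$, and the top-order part of $\partial_x^4 F_1$ is integrated by parts once more so that its leading piece merges into the energy. The outcome is the analogue of \eqref{a3.38}:
\begin{equation}\notag
-\frac{1}{2}\frac{\rm d}{{\rm d}t}\int(1+t)^{2\lambda}p'(V_x+\tilde v)(\partial_x^5 V)^2\,dx+\frac{\alpha}{2}\int(1+t)^{\lambda}(\partial_x^4 V_t)^2\,dx\leq C\int(1+t)^{2\lambda-1}(\partial_x^5 V)^2\,dx+R_1(t),
\end{equation}
where $R_1(t)$ is rapidly decaying; its slowest piece is the $h$-term $C(1+t)^{3\lambda}\|\partial_x^4 h\|^2\leq C\delta_0^4(1+t)^{-\frac{7\lambda}{2}-\frac{13}{2}}$ from Lemma \ref{Lemma a0.1}, everything else being lower-order $V$-norms already estimated in Lemmas \ref{Lemma a1}--\ref{Lemma a8}.

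Next I would apply $\partial_x^3$ to $(\ref{a1.20})$, multiply by $-(1+t)^{\lambda}\partial_x^5 V$ and integrate over $\mathbb{R}$. The damping term yields $\frac{\alpha}{2}\frac{\rm d}{{\rm d}t}\int(\partial_x^4 V)^2\,dx$ and the elliptic term yields, after moving the commutators to the right-hand side, $-\int(1+t)^{\lambda}p'(\tilde v)(\partial_x^5 V)^2\,dx\geq\frac{C_0}{2}\int(1+t)^{\lambda}(\partial_x^5 V)^2\,dx$, so that, in complete analogy with \eqref{a3.40}--\eqref{a3.46},
\begin{equation}\notag
\frac{\rm d}{{\rm d}t}\int\frac{\alpha}{2}(\partial_x^4 V)^2\,dx+\frac{C_0}{2}\int(1+t)^{\lambda}(\partial_x^5 V)^2\,dx\leq R_2(t),
\end{equation}
with $R_2(t)$ dominated by $C(1+t)^{\lambda}\|\partial_x^3 h\|^2\leq C\delta_0^4(1+t)^{-\frac{9\lambda}{2}-\frac{11}{2}}$. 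Adding a large multiple of the second identity to the first absorbs the term $C\int(1+t)^{2\lambda-1}(\partial_x^5 V)^2\,dx$ (since $2\lambda-1\leq\lambda$ for $\lambda<1$); then integrating in $t$ and using $-p'(\tilde v)\geq C_0$ together with Lemma \ref{Lemma a8} gives the un-weighted bound $(1+t)^{2\lambda}\|\partial_x^5 V\|^2+\int_0^t(1+s)^{\lambda}\bigl(\|\partial_x^4 V_t\|^2+\|\partial_x^5 V\|^2\bigr)\,ds\leq C(\|V_0\|_{H^5}^2+|\delta_0|^2)$.

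For the time-weighted estimate I would then multiply the first identity by $(1+t)^{3\lambda+5}$ and the second by $M(1+t)^{4\lambda+4}$ with $M$ large, and add. Differentiating the weight on the first identity produces a term of size $(1+t)^{5\lambda+4}(\partial_x^5 V)^2$, which for $M$ large is absorbed by $M\frac{C_0}{2}(1+t)^{5\lambda+4}(\partial_x^5 V)^2$ coming from the weighted dissipation of the second identity (the term $C\int(1+t)^{2\lambda-1}(\partial_x^5 V)^2$ of the first identity is likewise absorbed, since $(1+t)^{2\lambda-1+3\lambda+5}=(1+t)^{5\lambda+4}$). Differentiating the weight on the second identity produces a term of size $(1+t)^{4\lambda+3}(\partial_x^4 V)^2$, whose time-integral is precisely the quantity bounded in Lemma \ref{Lemma a8}; and $(1+t)^{3\lambda+5}R_1(t)+M(1+t)^{4\lambda+4}R_2(t)\leq C(1+t)^{-\frac{\lambda}{2}-\frac{3}{2}}$, which is integrable in $t$ for $0\leq\lambda<1$. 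Integrating in $t$ and discarding the nonnegative term $M(1+t)^{4\lambda+4}\|\partial_x^4 V\|^2$ yields exactly the asserted bound, the two dissipation integrals $\int_0^t(1+s)^{4\lambda+5}\|\partial_x^4 Z\|^2\,ds$ and $\int_0^t(1+s)^{5\lambda+4}\|\partial_x^5 V\|^2\,ds$ arising from the weighted damping and elliptic dissipations respectively.

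The part I expect to be the main obstacle is the control of the highest-order contribution of $\partial_x^4 F_1$ (and, in the second identity, of $\partial_x^3 F_1$): differentiated to this order, the nonlinear term $p(V_x+\tilde v)-p(\tilde v)-p'(\tilde v)V_x$ formally produces a $\partial_x^6 V$, which is not controlled. One must integrate by parts once more, so that its principal part becomes $\frac{1}{2}\frac{\rm d}{{\rm d}t}\int(1+t)^{2\lambda}\bigl(p'(V_x+\tilde v)-p'(\tilde v)\bigr)(\partial_x^5 V)^2\,dx$ — which merges into the energy — plus a remainder of size $(\epsilon+|\delta_0|)(1+t)^{2\lambda-1}(\partial_x^5 V)^2$ that is absorbed by the dissipation, exactly as in \eqref{a3.37}; the commutator terms with derivatives of $\tilde v$ then decay fast enough by Lemma \ref{Lemma a0.1} and Lemmas \ref{Lemma a1}--\ref{Lemma a8}. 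Apart from this point, the proof is a lengthy but routine bookkeeping of time weights.
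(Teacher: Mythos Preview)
Your proposal is correct and follows essentially the same approach as the paper: the paper does not write out the proof of Lemma~\ref{Lemma a9} but indicates it is obtained by the same scheme as Lemmas~\ref{Lemma a2}--\ref{Lemma a3}, namely pairing $(\ref{a1.20})_{xxxx}\times(1+t)^{2\lambda}\partial_x^4V_t$ with $(\ref{a1.20})_{xxx}\times(-(1+t)^{\lambda}\partial_x^5V)$ and then iterating the time weights. Your identification of the one genuine technical point---integrating the top-order nonlinear piece by parts so that it merges into the energy exactly as in \eqref{a3.37}---is the right observation, and your bookkeeping of the weights and of the $h$-remainders via Lemma~\ref{Lemma a0.1} is accurate.
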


\begin{lemma}\label{Lemma a10}
Under the assumptions of Theorem \ref{Thm a1}, if $\epsilon, |\delta_0|$ are small, it holds that
\begin{equation}\notag
\begin{split}
 &
        (1+t)^{3\lambda+5}\|\partial^3_x Z \|^2+ \int_0^t \Big((1+s)^{2\lambda+5}\|Z_{xxt}\|^2+(1+s)^{3\lambda+4}\|\partial^3_x Z\|^2\Big)ds\\
       \leq   &
   C
   ( \|V_0\|_{H^4}^2+\|Z_0 \|_{H^3}^2+|\delta_0|^2).
\end{split}
\end{equation}

\end{lemma}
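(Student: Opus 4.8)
The plan is to run a single weighted energy estimate for the equation obtained by differentiating \eqref{a3.52} twice in $x$, carried out along the lines of the proofs of Lemmas \ref{Lemma a5}-\ref{Lemma a7}, and then to combine it with the bound on $\int_0^t(1+s)^{3\lambda+4}\|\partial_x^3 Z\|^2ds$ already contained in Lemma \ref{Lemma a8}.

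First I would differentiate \eqref{a3.52} twice in $x$ to get
\begin{equation}\notag
(p'(\tilde v)V_x)_{xxxt}+\frac{\alpha}{(1+t)^\lambda}Z_{xxt}=\frac{\alpha\lambda}{(1+t)^{\lambda+1}}Z_{xx}-F_{1xxt},
\end{equation}
multiply it by $(1+t)^{2\lambda}Z_{xxt}$, integrate over $\mathbb{R}$, and integrate by parts once in $x$ in the first term. Recombining the principal part of $(p'(\tilde v)V_x)_{xxt}$ with the top-order part of $F_{1xxt}$ exactly as in \eqref{a3.37} (and in the proof of Lemma \ref{Lemma a7}) produces the coercive energy term $-\frac12\frac{\rm d}{{\rm d}t}\int(1+t)^{2\lambda}p'(V_x+\tilde v)Z_{xxx}^2dx$, so that, using $-p'(V_x+\tilde v)\geq C_0>0$, Lemma \ref{Lemma a0.1}, the {\it a priori} bound \eqref{axian} and Lemmas \ref{Lemma a1}-\ref{Lemma a8} for all remaining commutator and forcing terms, one arrives at an inequality of the form
\begin{equation}\notag
-\frac12\frac{\rm d}{{\rm d}t}\int(1+t)^{2\lambda}p'(V_x+\tilde v)Z_{xxx}^2dx+\frac{\alpha}{2}\int(1+t)^{\lambda}Z_{xxt}^2dx\leq C\int(1+t)^{2\lambda-1}Z_{xxx}^2dx+R(t),
\end{equation}
where $R(t)$ is a finite sum of terms, each of which is either $|\delta_0|^2$ times a power of $(1+t)$ with sufficiently negative exponent, or a factor $\epsilon$ or $|\delta_0|$ times a lower-order quantity whose $(1+t)^{\lambda+5}$-weighted time integral is already controlled by Lemmas \ref{Lemma a1}-\ref{Lemma a8}. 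Multiplying this inequality by $(1+t)^{\lambda+5}$ and integrating in $t$, the term coming from differentiating the weight is comparable to $\int_0^t(1+s)^{3\lambda+4}\|\partial_x^3 Z\|^2ds$, which together with the contribution of $C\int(1+t)^{2\lambda-1}Z_{xxx}^2dx$ is bounded by Lemma \ref{Lemma a8}; combined with $\|\partial_x^3 Z_0\|\leq\|Z_0\|_{H^3}$ this yields
\begin{equation}\notag
(1+t)^{3\lambda+5}\|\partial_x^3 Z\|^2+\int_0^t(1+s)^{2\lambda+5}\|Z_{xxt}\|^2ds\leq C(\|V_0\|_{H^4}^2+\|Z_0\|_{H^3}^2+|\delta_0|^2).
\end{equation}
Adding the bound $\int_0^t(1+s)^{3\lambda+4}\|\partial_x^3 Z\|^2ds\leq C(\|V_0\|_{H^4}^2+|\delta_0|^2)$ supplied by Lemma \ref{Lemma a8} then gives the assertion.

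The step I expect to be the main obstacle is the estimate of the forcing contribution $-\int(1+t)^{2\lambda}F_{1xxt}Z_{xxt}dx$. Because $F_{1xxt}$ involves $\partial_x^3\partial_t$ of the quadratic nonlinearity $p(V_x+\tilde v)-p(\tilde v)-p'(\tilde v)V_x=\frac12 p''(\theta V_x+\tilde v)V_x^2$, its leading term carries $\partial_x^4 Z$, one derivative more than the quantity being estimated; after an integration by parts it must either be absorbed into the $\frac{\rm d}{{\rm d}t}$ energy (this is the reason the coefficient is $p'(V_x+\tilde v)$ and not $p'(\tilde v)$) or be split, via Sobolev embedding together with Lemmas \ref{Lemma a2}-\ref{Lemma a3} and \ref{Lemma a8}, into an $L^\infty$-small low-order derivative times an $L^2$-bounded higher-order derivative. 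After that, one still has to check that each term in $R(t)$ remains integrable in $t$ once multiplied by $(1+t)^{\lambda+5}$, or is controlled by Lemmas \ref{Lemma a1}-\ref{Lemma a8}; it is precisely this final bookkeeping, and the convergence of the time integrals, that forces $0\leq\lambda<1$.
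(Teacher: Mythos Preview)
Your proposal is correct and follows the same pattern the paper uses for Lemmas \ref{Lemma a5}--\ref{Lemma a7}: differentiate \eqref{a3.52} twice in $x$, test with $(1+t)^{2\lambda}Z_{xxt}$, merge the principal parts of $(p'(\tilde v)V_x)_{xxt}$ and $F_{1xxt}$ into $-\tfrac12\frac{\rm d}{{\rm d}t}\int(1+t)^{2\lambda}p'(V_x+\tilde v)Z_{xxx}^2dx$ exactly as in \eqref{a3.37}, and then raise the weight by $(1+t)^{\lambda+5}$. The only (harmless) shortcut relative to the two-multiplier scheme indicated before Lemma \ref{Lemma a6} is that you skip the companion estimate obtained from testing with $(1+t)^{\lambda}Z_{xx}$ and instead invoke Lemma \ref{Lemma a8} directly for $\int_0^t(1+s)^{3\lambda+4}\|\partial_x^3 Z\|^2\,ds$; since that bound is already available, this is a legitimate simplification and gives the same result.
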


\begin{lemma}\label{Lemma a11}
Under the assumptions of Theorem \ref{Thm a1}, if $\epsilon, |\delta_0|$ are small, it holds that
\begin{equation}\notag
\begin{split}
 &
        (1+t)^{4\lambda+6}\|\partial^4_x Z \|^2+ \int_0^t \Big((1+s)^{3\lambda+6}\|\partial_x^3Z_{t}\|^2+(1+s)^{4\lambda+5}\|\partial^4_x Z\|^2\Big)ds\\
       \leq   &
   C
   ( \|V_0\|_{H^5}^2+\|Z_0 \|_{H^4}^2+|\delta_0|^2).
\end{split}
\end{equation}
\end{lemma}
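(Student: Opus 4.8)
The plan is to push the weighted-energy hierarchy of Section~\ref{S2.2} one step further. Since $Z=V_t$, the bound $\int_0^t(1+s)^{4\lambda+5}\|\partial_x^4Z\|^2\,ds\le C(\|V_0\|_{H^5}^2+|\delta_0|^2)$ appearing in the assertion is exactly the time-integral already proved in Lemma~\ref{Lemma a9} (one may also re-derive it by testing the equation below against $(1+t)^\lambda\partial_x^3Z$, multiplying by $(1+t)^{3\lambda+5}$ and using Lemma~\ref{Lemma a10}); hence the only genuinely new content is the pointwise estimate for $(1+t)^{4\lambda+6}\|\partial_x^4Z\|^2$ together with the dissipation bound for $\|\partial_x^3Z_t\|^2$, which I obtain by differentiating \eqref{a1.20} once in $t$ (this is \eqref{a3.52}) and then three times in $x$, namely
\begin{equation*}
\partial_x^3\big[(p'(\tilde v)V_x)_{xt}\big]+\frac{\alpha}{(1+t)^\lambda}\,\partial_x^3Z_t=\frac{\alpha\lambda}{(1+t)^{\lambda+1}}\,\partial_x^3Z-\partial_x^3F_{1t},
\end{equation*}
whose principal part expands to $p'(\tilde v)\,\partial_x^5Z$ plus terms carrying at most four $x$-derivatives of $Z$ with smooth, decaying coefficients built from $\tilde v$, and then testing it against $(1+t)^{2\lambda}\partial_x^3Z_t$.

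Integrating over $\mathbb R$ and integrating by parts once in the principal term, the leading contribution is $-\tfrac12\frac{\rm d}{{\rm d}t}\int(1+t)^{2\lambda}p'(\tilde v)|\partial_x^4Z|^2\,dx$ (plus a term of order $(1+t)^{2\lambda-1}|\partial_x^4Z|^2$ coming from differentiating the weight and the coefficient $p'(\tilde v)$ in time, and lower), while the damping term contributes $\alpha\int(1+t)^\lambda|\partial_x^3Z_t|^2\,dx$. Using $-p'(\tilde v)\ge C_0>0$, the smallness of $\epsilon$ and $|\delta_0|$, Lemma~\ref{Lemma a0.1} for the $h$-part of $\partial_x^3F_{1t}$, and Lemmas~\ref{Lemma a1}--\ref{Lemma a10} together with the a priori bound \eqref{axian} for all other terms, one is led to
\begin{equation*}
-\frac12\frac{\rm d}{{\rm d}t}\int(1+t)^{2\lambda}p'(\tilde v)|\partial_x^4Z|^2\,dx+\frac{\alpha}{2}\int(1+t)^\lambda|\partial_x^3Z_t|^2\,dx\le C\int(1+t)^{2\lambda-1}|\partial_x^4Z|^2\,dx+C|\delta_0|^2(1+t)^{-\gamma}+(\text{l.o.t.}),
\end{equation*}
where $\gamma$ is large and the lower-order terms have weighted time-integrals already controlled in Lemmas~\ref{Lemma a1}--\ref{Lemma a10}.

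Next I would multiply this inequality by $(1+t)^{2\lambda+6}$, so that the left side controls $(1+t)^{4\lambda+6}\|\partial_x^4Z\|^2$ from below (recall $-p'(\tilde v)$ is bounded below by $C_0$) and the dissipation becomes $\int(1+t)^{3\lambda+6}|\partial_x^3Z_t|^2\,dx$, and integrate over $[0,t]$. The term generated by differentiating the weight $(1+t)^{2\lambda+6}$ is of size $(1+t)^{4\lambda+5}\|\partial_x^4Z\|^2$ and is absorbed by the time-integral $\int_0^t(1+s)^{4\lambda+5}\|\partial_x^4Z\|^2\,ds$ quoted at the start; the initial energy is bounded by $C\|Z_0\|_{H^4}^2$; and the remaining contributions are controlled by Lemmas~\ref{Lemma a1}--\ref{Lemma a10}. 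This yields $(1+t)^{4\lambda+6}\|\partial_x^4Z\|^2+\int_0^t(1+s)^{3\lambda+6}\|\partial_x^3Z_t\|^2\,ds\le C(\|V_0\|_{H^5}^2+\|Z_0\|_{H^4}^2+|\delta_0|^2)$, which together with the $\int_0^t(1+s)^{4\lambda+5}\|\partial_x^4Z\|^2\,ds$ bound is the assertion.

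The scheme is routine, being the same energy hierarchy used throughout Section~\ref{S2.2}, so the real effort --- and the main obstacle --- is bookkeeping: after three differentiations in $x$ (and, with the $t$-derivative, effectively a fourth) of the variable-coefficient operator $(p'(\tilde v)\cdot)_x$ and of the nonlinearity $p(V_x+\tilde v)-p(\tilde v)-p'(\tilde v)V_x$ in \eqref{a1.21}, one must track every commutator term and verify that it either decays like $|\delta_0|^2(1+t)^{-\gamma}$ with $\gamma$ large enough or has a weighted time-integral already bounded in Lemmas~\ref{Lemma a1}--\ref{Lemma a10}; this is precisely why $V_0\in H^5$, $Z_0\in H^4$ and Lemmas~\ref{Lemma a8}--\ref{Lemma a10} are needed. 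The one delicate point is the highest-order term $\partial_x^5Z$ hidden inside $\partial_x^3F_{1t}$: it must always be kept multiplied by a factor of size $\epsilon$ (namely $V_x$) or $|\delta_0|(1+t)^{-(\lambda+1)/2}$ (namely $\tilde v_x$), so that a single integration by parts turns it into a quantity absorbable by the dissipation $\int(1+t)^\lambda|\partial_x^4Z|^2\,dx$ --- otherwise one would need an a priori bound on $\|\partial_x^5Z\|$, which is not available.
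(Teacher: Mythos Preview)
Your proposal is correct and follows exactly the paper's weighted-energy hierarchy: Lemma~\ref{Lemma a11} is stated without proof, but the template from Lemmas~\ref{Lemma a5}--\ref{Lemma a6} (test the $t$-differentiated equation \eqref{a3.52}, further differentiated three times in $x$, against $(1+t)^{2\lambda}\partial_x^3Z_t$, then scale by $(1+t)^{2\lambda+6}$, with the time-integral of $(1+s)^{4\lambda+5}\|\partial_x^4Z\|^2$ coming from Lemma~\ref{Lemma a9}) is precisely your scheme. One small inaccuracy in your closing remark: the top-order nonlinear piece $(p'(V_x+\tilde v)-p'(\tilde v))\partial_x^4Z\cdot\partial_x^4Z_t$ arising after your integration by parts is not absorbed by a ``dissipation $\int(1+t)^\lambda|\partial_x^4Z|^2\,dx$'' (no such term appears in this multiplier) but is instead folded into the energy functional as $-\tfrac12\tfrac{\rm d}{{\rm d}t}\int(1+t)^{2\lambda}p'(V_x+\tilde v)|\partial_x^4Z|^2\,dx$, exactly as in \eqref{a3.37}.
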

\begin{lemma}\label{Lemma a12}
Under the assumptions of Theorem \ref{Thm a1},  if $\epsilon, |\delta_0|$ are small, it holds that
\begin{equation}\notag
\begin{split}
        (1+t)^{2\lambda+6}\|Z_{xxt}  \|^2+(1+t)^{3\lambda+7}\|\partial^3_xZ_{t} \|^2
       \leq
   C
   ( \|V_0\|_{H^5}^2+\|Z_0 \|_{H^4}^2+|\delta_0|^2).
\end{split}
\end{equation}

\end{lemma}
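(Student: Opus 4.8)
The plan is to rerun the two-multiplier energy argument from the proof of Lemma \ref{Lemma a7}, this time on the equation obtained by differentiating \eqref{a3.70} twice in $x$, and to close the estimate using the bounds already established in Lemmas \ref{Lemma a1}--\ref{Lemma a11}. Since $Z=V_t$, the quantities to be controlled are $Z_{xxt}=\partial_x^2\partial_t^2V$ and $\partial_x^3Z_t=\partial_x^3\partial_t^2V$, i.e. precisely the pair $(Z_t,Z_{xt})$ of Lemma \ref{Lemma a7} carrying two extra $x$-derivatives; they satisfy $\partial_x^2$ of \eqref{a3.70}, namely
\begin{equation}\notag
(p'(\tilde v)V_x)_{xxxtt}+\frac{\alpha}{(1+t)^\lambda}Z_{xxtt}+\frac{\alpha\lambda(\lambda+1)}{(1+t)^{\lambda+2}}Z_{xx}=\frac{2\alpha\lambda}{(1+t)^{\lambda+1}}Z_{xxt}-F_{1xxtt}.
\end{equation}

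First I would multiply this identity by $(1+t)^\lambda Z_{xxt}$ and integrate over $\mathbb{R}$. Exactly as in \eqref{a3.54-}, the damping term yields $\frac\alpha2\frac{\rm d}{{\rm d}t}\|Z_{xxt}\|^2$, the $Z_{xx}$-term yields $\frac{\rm d}{{\rm d}t}\big[\frac{\alpha\lambda(\lambda+1)}{2}(1+t)^{-2}\|Z_{xx}\|^2\big]$ plus a harmless $\int(1+t)^{-3}Z_{xx}^2$, and an integration by parts in $x$ of the principal term, after merging the top-order part of $F_{1xxtt}$ into the coefficient (replacing $p'(\tilde v)$ by $p'(V_x+\tilde v)$, as in the proof of Lemma \ref{Lemma a7}), produces the dissipation $\frac{C_0}{2}\int(1+t)^\lambda Z_{xxxt}^2$ together with commutators. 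Every commutator, and the $h_{xxtt}$-contribution, is estimated as in the earlier lemmas: the $\tilde v$- and $h$-factors by Lemma \ref{Lemma a0.1}, the factors $V_x,V_{xt},V_{xx}$ by the {\it a priori} assumption \eqref{axian}, and the higher derivatives $V_{xxx},\dots,\partial_x^5V$, $Z_{xx},\partial_x^3Z,\partial_x^4Z$ by Lemmas \ref{Lemma a3}, \ref{Lemma a8}--\ref{Lemma a11}. Multiplying the resulting differential inequality by $(1+t)^{2\lambda+6}$ and integrating in $t$, the terms produced when the weight hits the time derivatives, namely the pointwise $(1+t)^{2\lambda+4}\|Z_{xx}\|^2$ and the integrals $\int_0^t(1+s)^{2\lambda+5}\|Z_{xxt}\|^2 ds$ and $\int_0^t(1+s)^{2\lambda+3}\|Z_{xx}\|^2 ds$ (the latter also absorbing the $\int(1+t)^{-3}Z_{xx}^2$ term and the $\int(1+t)^{-1}Z_{xxt}^2$ coming from the right-hand side), are all already bounded in Lemmas \ref{Lemma a6} and \ref{Lemma a10}. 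This gives $(1+t)^{2\lambda+6}\|Z_{xxt}\|^2\le C(\|V_0\|_{H^5}^2+\|Z_0\|_{H^4}^2+|\delta_0|^2)$ and, as a by-product, $\int_0^t(1+s)^{3\lambda+6}\|Z_{xxxt}\|^2 ds\le C(\cdots)$.

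Next I would multiply the same identity by $(1+t)^{2\lambda}Z_{xxtt}$ and integrate. Now the damping term gives the dissipation $\alpha\int(1+t)^\lambda Z_{xxtt}^2$, the terms with $Z_{xx}$ and $Z_{xxt}$ on the right are split by Cauchy--Schwarz into a small multiple of $\int(1+t)^\lambda Z_{xxtt}^2$ plus $C\int(1+t)^{\lambda-4}Z_{xx}^2+C\int(1+t)^{\lambda-2}Z_{xxt}^2$, and an integration by parts in $x$ of the principal term, extracting its leading part and using $Z_{xxxt}Z_{xxxtt}=\frac12\partial_t(Z_{xxxt}^2)$ together once more with the top-order nonlinearity, produces $-\frac12\frac{\rm d}{{\rm d}t}\int(1+t)^{2\lambda}p'(V_x+\tilde v)Z_{xxxt}^2\,dx$, exactly as in \eqref{a3.59}; the remaining commutators are controlled as above. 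Since $-p'(V_x+\tilde v)\ge C_0/2$ by the smallness of $V_x$, multiplying by $(1+t)^{\lambda+7}$ and integrating in $t$ puts $(1+t)^{3\lambda+7}\|\partial_x^3Z_t\|^2$ on the good side, while the term created by differentiating the weight, $\sim\int_0^t(1+s)^{3\lambda+6}\|Z_{xxxt}\|^2 ds$, is absorbed by the by-product of the first step and by the dissipation integral of Lemma \ref{Lemma a11}; the other right-hand terms $\int(1+s)^{2\lambda+3}\|Z_{xx}\|^2 ds$ and $\int(1+s)^{2\lambda+5}\|Z_{xxt}\|^2 ds$ are controlled by Lemmas \ref{Lemma a6} and \ref{Lemma a10}. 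Adding the two pointwise bounds gives the lemma.

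The main obstacle is purely bookkeeping. Expanding $F_{1xxtt}=h_{xxtt}+\big(p(V_x+\tilde v)-p(\tilde v)-p'(\tilde v)V_x\big)_{xxxtt}$ and the commutators generated by $(p'(\tilde v)V_x)_{xxxtt}$ produces a long list of multilinear terms in derivatives of $V$ and $\tilde v$ of order at most five in $x$ and three in $t$, and for each one must check that the decay available from Lemmas \ref{Lemma a0.1}--\ref{Lemma a11} beats the weight $(1+t)^{2\lambda+6}$ (resp. $(1+t)^{\lambda+7}$) of the test function. As in Lemmas \ref{Lemma a5}--\ref{Lemma a11} this always works, because $0\le\lambda<1$, because $h$ and every derivative of $\tilde v-v_+$ carry the extra polynomial decay $(1+t)^{-\frac{5(\lambda+1)}{2}}$ furnished by Lemma \ref{Lemma a0.1}, and because the genuinely highest-order terms occur with a prefactor $\epsilon$ or $|\delta_0|$ and are therefore absorbable into the dissipation; no new idea beyond the scheme of Lemma \ref{Lemma a7} is needed.
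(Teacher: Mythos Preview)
Your proposal is correct and follows exactly the scheme the paper has in mind: the paper does not write out a proof of this lemma, merely listing it among Lemmas \ref{Lemma a8}--\ref{Lemma a12} as further consequences of the same two-multiplier method used for Lemmas \ref{Lemma a5}--\ref{Lemma a7}, applied after taking two more $x$-derivatives and closed with the higher-order bounds of Lemmas \ref{Lemma a8}--\ref{Lemma a11}. Your identification of the needed inputs (in particular the dissipation integrals $\int_0^t(1+s)^{2\lambda+5}\|Z_{xxt}\|^2\,ds$ from Lemma \ref{Lemma a10} and $\int_0^t(1+s)^{3\lambda+6}\|\partial_x^3Z_t\|^2\,ds$ from Lemma \ref{Lemma a11}) and of the weights $(1+t)^{2\lambda+6}$ and $(1+t)^{\lambda+7}$ is exactly right.
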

Restricting the solution $(\bar v, \bar u)(x,t)$ of the Cauchy problem \eqref{a1.7} on  $\mathbb{R}^+$, we get the solution of the Dirichlet initial-boundary value problem \eqref{a1.5}. From \eqref{a1.14}, Lemmas \ref{Lemma a0.1}, \ref{Thm a1}, \ref{Lemma a8}-\ref{Lemma a12} and Minkowski's inequality $\|u+v\|_{L^p}\leq \|u\|_{L^p}+\|v\|_{L^p}$, we have

\begin{proposition}{\bf (Decay rates of the nonlinear diffusion waves).} \label{Prop a1}
For any $\alpha>0$, $(V_0,Z_0)(x)\in H^5(\mathbb{R})\times H^4(\mathbb{R})$, assume that
  $\| v_0-v_+\|_{L^1(\mathbb{R}^+)} +|u_+|+\|V_0\|_{H^3(\mathbb{R})}+\|Z_0\|_{H^2(\mathbb{R})}$ is sufficiently small. Then, for any $0\leq\lambda<1$, let  $\delta=\|   v_0-v_+\|_{L^1(\mathbb{R}^+)} +|u_+|+\|V_0\|_{H^5(\mathbb{R})}+\|Z_0\|_{H^4(\mathbb{R})}$,
there exists a unique time-global solution $(\bar v, \bar u)(x,t)$ of the initial-boundary value problem \eqref{a1.5} satisfying
\begin{equation}\notag
\begin{split}
       \|\partial_x^k(\bar v(t)-  v_+)\|_{L^p(\mathbb{R}^+)}
            \leq
                   C\delta  (1+t)^{-\frac{\lambda+1}{2}(1-\frac{1}{p})-\frac{(\lambda+1)k}{2}},
\end{split}
\end{equation}
where  $ k\leq 3$,  {if} $ p\in(2,\infty]$;   $ k\leq 4$, {if} $ p=2,$
\begin{equation}\notag
\begin{split}
       \|\partial_t\partial_x^k\bar v(t)\|_{L^p(\mathbb{R}^+)}
            \leq
                   C\delta  (1+t)^{-\frac{\lambda+1}{2}(1-\frac{1}{p})-\frac{(\lambda+1)k}{2}-1},
\end{split}
\end{equation}
where  $k\leq 2$,  {if} $ p\in(2,\infty]$;   $k\leq 3$, {if} $ p=2,$
\begin{equation}\notag
\begin{split}
       \|\partial_t^2\partial_x^k\bar v(t)\|_{L^p(\mathbb{R}^+)}
            \leq
                   C\delta  (1+t)^{-\frac{\lambda+1}{2}(1-\frac{1}{p})-\frac{(\lambda+1)k}{2}-2},
\end{split}
\end{equation}
where $k\leq 1$, if $p\in(2,\infty]$;  $k\leq 2$, if $ p=2$.
\end{proposition}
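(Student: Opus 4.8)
\noindent\emph{Proof strategy for Proposition \ref{Prop a1}.} The idea is to transfer the whole-line decay estimates of Theorem \ref{Thm a1} and Lemmas \ref{Lemma a1}--\ref{Lemma a12} to the half-line diffusion wave, via the splitting $\bar v=V_x+\tilde v$, $\bar u=Z+\tilde u$ and the sharp bounds on the self-similar profile $\tilde v$ supplied by Lemma \ref{Lemma a0.1}. From \eqref{a1.14}--\eqref{a1.15} and the relation $V_t=Z$ in $(\ref{a1.19})_1$ one has, for all $k$,
\[
\partial_x^k(\bar v-v_+)=\partial_x^{k+1}V+\partial_x^k(\tilde v-v_+),\quad
\partial_t\partial_x^k\bar v=\partial_x^{k+1}Z+\partial_t\partial_x^k\tilde v,\quad
\partial_t^2\partial_x^k\bar v=\partial_x^{k+1}Z_t+\partial_t^2\partial_x^k\tilde v,
\]
so by Minkowski's inequality it suffices to bound each summand on $\mathbb{R}$, after which restriction to $\mathbb{R}^+$ only decreases $L^p$ norms; as already noted before the Proposition, the restriction of the Cauchy solution $(\bar v,\bar u)$ of \eqref{a1.7} to $\mathbb{R}^+$ solves \eqref{a1.5} (the even/odd extensions force $\bar u(0,t)=0$ and $\bar v_x(0,t)=0$, a symmetry preserved by uniqueness for \eqref{a1.7}), which also supplies the global existence and uniqueness asserted in the Proposition.

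The $\tilde v$-pieces are handled directly: the first estimate of Lemma \ref{Lemma a0.1}, together with $|\delta_0|\le C\delta$ from \eqref{1.20idata}, gives exactly the rates $C\delta(1+t)^{-\frac{\lambda+1}{2}(1-\frac1p)-\frac{(\lambda+1)k}{2}-l}$, $l=0,1,2$, asserted in the Proposition. It then remains to check that the $V$-, $Z$- and $Z_t$-pieces decay at least as fast. For $p=2$ this is immediate from the energy estimates: $\|\partial_x^{j}V\|\le C\delta(1+t)^{-\frac{(\lambda+1)j}{2}}$ for $j\le5$ (Theorem \ref{Thm a1}, Lemmas \ref{Lemma a8}--\ref{Lemma a9}), $\|\partial_x^{j}Z\|\le C\delta(1+t)^{-\frac{(\lambda+1)j}{2}-1}$ for $j\le4$ (Lemmas \ref{Lemma a5}, \ref{Lemma a6}, \ref{Lemma a10}, \ref{Lemma a11}), and $\|\partial_x^{j}Z_t\|\le C\delta(1+t)^{-\frac{(\lambda+1)j}{2}-2}$ for $j\le3$ (Lemmas \ref{Lemma a7}, \ref{Lemma a12}); taking $j=k+1$, each exponent exceeds the target one by $\frac{\lambda+1}{2}$, which also pins down the admissible ranges $k\le4$, $k\le3$, $k\le2$ in the $p=2$ cases.

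For $2<p\le\infty$ I would pass from $L^2$ to $L^p$ by the one-dimensional Gagliardo--Nirenberg inequality $\|f\|_{L^p(\mathbb{R})}\le C\|f\|^{\frac12+\frac1p}\|f_x\|^{\frac12-\frac1p}$, applied with $f=\partial_x^{k+1}V$, $\partial_x^{k+1}Z$, $\partial_x^{k+1}Z_t$. This costs one extra spatial derivative in $L^2$, so the admissible ranges drop by one to $k\le3$, $k\le2$, $k\le1$, in agreement with the statement; inserting the $L^2$ rates above produces, for the $V$-part, the exponent $\frac{(\lambda+1)(k+1)}{2}+\frac{\lambda+1}{2}\left(\frac12-\frac1p\right)$ (and the analogues with an extra $+1$, $+2$ for the $Z$- and $Z_t$-parts), which still exceeds the required $\frac{\lambda+1}{2}(1-\frac1p)+\frac{(\lambda+1)k}{2}$ by $\frac{\lambda+1}{4}>0$. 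Collecting the $\tilde v$-, $V$-, $Z$- and $Z_t$-contributions and restricting to $\mathbb{R}^+$ then yields the three families of bounds.

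The only genuine obstacle is the bookkeeping: no new analytic ingredient is needed beyond Theorem \ref{Thm a1} and the higher-order Lemmas \ref{Lemma a8}--\ref{Lemma a12}, but one must match each $L^2$ decay rate to the correct lemma, keep track of how many spatial derivatives remain for the Gagliardo--Nirenberg step, and verify in each of the finitely many cases that the $V$/$Z$/$Z_t$ contribution is genuinely dominated by the self-similar profile $\tilde v$ — which is precisely what forces the admissible ranges of $k$ to differ by one between $p=2$ and $p\in(2,\infty]$ as displayed in the Proposition.
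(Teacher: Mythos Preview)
Your proposal is correct and follows essentially the same route as the paper: the paper's own argument is the one-line remark that restricting the Cauchy solution of \eqref{a1.7} to $\mathbb{R}^+$ gives the solution of \eqref{a1.5}, and then the decay estimates follow from the splitting \eqref{a1.14}, Lemma \ref{Lemma a0.1}, Theorem \ref{Thm a1}, Lemmas \ref{Lemma a8}--\ref{Lemma a12} and Minkowski's inequality. Your write-up is considerably more detailed than the paper's (in particular the explicit Gagliardo--Nirenberg interpolation for $p>2$ and the exponent bookkeeping), and apart from a harmless arithmetic slip in the $p=2$ excess (it is $\tfrac{\lambda+1}{4}$ rather than $\tfrac{\lambda+1}{2}$), the argument is sound.
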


Finally, we give the following dissipative property of the correction function $\hat v(x,t)$ as in \cite{Cui-Yin-Zhang-Zhu2016}.
\begin{proposition}\label{Prop a2}
For any $0\leq \lambda <1$, then there exist constants $0<\vartheta<1-\lambda$, $c$ and $C>0$, the correction function $\hat v(x,t)$
defined in \eqref{1.10} satisfies the following dissipative
estimates:
\begin{equation}\notag
\|\hat v\|_{L^1(\mathbb{R}^+)}+\|\hat v\|_{H^\infty(\mathbb{R}^+)}+\|\hat v_t\|_{H^\infty(\mathbb{R}^+)} \leq C|u_+| e^{-ct^\vartheta}.
\end{equation}
\end{proposition}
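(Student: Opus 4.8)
The proof is essentially an explicit computation, since the correction function has product form: $\hat v(x,t)=u_+ m_0(x)B(t)$ with $m_0\in C_0^\infty(\mathbb{R}^+)$ a fixed function, and, because $B'(t)=\beta(t)$, also $\hat v_t(x,t)=u_+ m_0(x)\beta(t)$. Hence every spatial norm occurring in the statement factors through a finite, $t$-independent Sobolev norm of $m_0$, and the plan is to reduce the whole estimate to the scalar bounds on $|B(t)|$ and $\beta(t)$. Concretely, since $\partial_x^k\hat v=u_+(\partial_x^k m_0)B(t)$ and $\partial_x^k\hat v_t=u_+(\partial_x^k m_0)\beta(t)$, and all $\|\partial_x^k m_0\|_{L^2}$, $\|m_0\|_{L^1}$ are finite constants, one gets
\begin{equation}\notag
\|\hat v\|_{L^1(\mathbb{R}^+)}+\|\hat v\|_{H^\infty(\mathbb{R}^+)}+\|\hat v_t\|_{H^\infty(\mathbb{R}^+)}\le C|u_+|\bigl(|B(t)|+\beta(t)\bigr),
\end{equation}
so it remains to show $|B(t)|+\beta(t)\le Ce^{-ct^\vartheta}$ for some $0<\vartheta<1-\lambda$.

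For the pointwise decay of $\beta$ I would use the elementary inequality $(1+t)^{1-\lambda}-1\ge c_0 t^{1-\lambda}$, valid for all $t\ge0$ with some $c_0=c_0(\lambda)>0$ (trivial for bounded $t$; for large $t$ it follows from $(1+t)^{1-\lambda}\ge t^{1-\lambda}$), which gives $\beta(t)\le e^{-\frac{\alpha c_0}{1-\lambda}t^{1-\lambda}}$. Fixing any $\vartheta\in(0,1-\lambda)$ and using $t^{1-\lambda}\ge t^\vartheta$ for $t\ge1$ together with the boundedness of $\beta$ on $[0,1]$ yields $\beta(t)\le Ce^{-ct^\vartheta}$ after adjusting the constants.

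For $B(t)$ I would estimate the tail integral: $|B(t)|=\int_t^\infty\beta(\tau)\,d\tau\le\int_t^\infty e^{-\frac{\alpha c_0}{1-\lambda}\tau^{1-\lambda}}\,d\tau$, and on $\{\tau\ge t\}$ split the exponent as $\tau^{1-\lambda}=\frac{1}{2}\tau^{1-\lambda}+\frac{1}{2}\tau^{1-\lambda}\ge\frac{1}{2}\tau^{1-\lambda}+\frac{1}{2}t^{1-\lambda}$, so that
\begin{equation}\notag
|B(t)|\le e^{-\frac{\alpha c_0}{2(1-\lambda)}t^{1-\lambda}}\int_0^\infty e^{-\frac{\alpha c_0}{2(1-\lambda)}\tau^{1-\lambda}}\,d\tau\le Ce^{-\frac{\alpha c_0}{2(1-\lambda)}t^{1-\lambda}},
\end{equation}
the last integral being a finite Gamma-type integral precisely because $\lambda<1$. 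Comparing $t^{1-\lambda}$ with $t^\vartheta$ as before finishes the bound, and combining with the estimate for $\beta$ proves the proposition. There is no genuine obstacle here: the only points needing a line of care are the elementary inequality $(1+t)^{1-\lambda}-1\ge c_0 t^{1-\lambda}$, the splitting of the tail integral (which is exactly where $\lambda<1$ enters, via convergence of $\int_0^\infty e^{-c\tau^{1-\lambda}}\,d\tau$), and the trivial treatment of $t\in[0,1]$; this parallels the corresponding argument in \cite{Cui-Yin-Zhang-Zhu2016}.
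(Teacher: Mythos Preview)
Your argument is correct and is exactly the direct computation the paper defers to \cite{Cui-Yin-Zhang-Zhu2016}; the factorisation through $\|m_0\|$ and the reduction to scalar bounds on $\beta(t)$ and $|B(t)|$ is the only reasonable route here. One small slip: the inequality $(1+t)^{1-\lambda}-1\ge c_0\, t^{1-\lambda}$ is \emph{not} ``trivial for bounded $t$'' and in fact fails near $t=0$ when $\lambda>0$, since the left side is $\sim(1-\lambda)t$ while the right side is $\sim c_0 t^{1-\lambda}\gg t$; it does, however, hold for $t\ge1$ by the continuity/limit argument you indicate, and since you already treat $t\in[0,1]$ via the boundedness of $\beta$ and $|B|$, this is harmless.
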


\subsection{Proofs of Theorems \ref{Thm 1}-\ref{Thm 1-1}}\label{S2.3}
In this subsection, we prove  Theorems \ref{Thm 1}-\ref{Thm 1-1}.
To begin with, we give the local (in time) estimates of the initial boundary problem
\eqref{1.20}-\eqref{1.21} for any $0\leq \lambda<1$. The proofs are quite similar to those in Section 3 of \cite{Cui-Yin-Zhang-Zhu2016}. Thus we omit the details here.
\begin{proposition}{\bf (Locally estimates).}  \label{prop 3} Under the conditions of Theorems \ref{Thm 1}-\ref{Thm 1-1}, for any given $T>0$, $0\leq \lambda<1$, $\alpha>0$, the solution $(\omega, \omega_t)(x,t)$ to the initial boundary problem
\eqref{1.20}-\eqref{1.21} on $[0,T]$ satisfying
\begin{equation}\notag
  \begin{split}
  &
  \|\omega\|^2+ \|\omega_t\|^2+\|\omega_x\|^2
  +
  \int_0^t \big(\|\omega_t\|^2+\|\omega_x\|^2\big)ds
  \leq
  C(T)(\|\omega_0\|_{H^1}^2+\|z_0 \|^2+\delta),
  \end{split}
\end{equation}
\begin{equation}\notag
 \begin{split}
 &
 \|\omega_{xx}\|^2+\|\omega_{xt}\|^2
 +
 \int_0^t\big( \|\omega_{xx}\|^2+ \|\omega_{xt}\|^2\big)ds
 \leq
 C(T)( \|\omega_0\|_{H^2}^2+\|z_0 \|_{H^1}^2+\delta),
 \end{split}
 \end{equation}
 and
 \begin{equation}\notag
 \begin{split}
 &
 \|\omega_{xxx}\|^2+\|\omega_{xxt}\|^2
 +
 \int_0^t\big(\|\omega_{xxx}\|^2+\|\omega_{xxt}\|^2\big)ds
 \leq
 C(T)( \|\omega_0\|_{H^3}^2+\|z_0 \|_{H^2}^2+\delta).
 \end{split}
 \end{equation}
\end{proposition}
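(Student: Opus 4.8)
The plan is to combine a standard local existence construction with energy estimates whose constants are allowed to depend on $T$ through Gronwall's inequality, so that the delicate decay bookkeeping of the global theory plays no role here. Equation \eqref{1.20} is a quasilinear damped wave equation whose principal part $(p'(\bar v)\omega_x)_x$ is uniformly (negatively) elliptic in $x$, since $p'(\bar v)\le\tfrac12 p'(v_+)<0$ once $\bar v$ stays close to $v_+$, which is guaranteed on $[0,T]$ by Proposition \ref{Prop a1} and the smallness of $\delta$. I would first solve it on a short interval by the usual linearization/iteration scheme: freeze $\omega_x$ in the coefficient $p'(\bar v)$ (which does not involve $\omega$ itself) and in the nonlinear part of $F$, solve the resulting linear damped wave equation on $\mathbb{R}^+$ with the homogeneous Dirichlet datum $\omega(0,t)=0$ (classical in $H^3\times H^2$), and check that the iteration map is a contraction in a low norm while staying bounded in the high norm. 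Passing to the limit yields $\omega\in C^k((0,T'),H^{3-k}(\mathbb{R}^+))$ for small $T'$, and the a~priori bounds below then extend this to any $[0,T]$ on which the solution persists, which is exactly the content of the proposition.

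For the estimates I would proceed order by order, mirroring Section 3 of \cite{Cui-Yin-Zhang-Zhu2016} but tracking the boundary contributions. At the base level, multiply \eqref{1.20} by $\omega_t$ and integrate over $\mathbb{R}^+$: the term $(p'(\bar v)\omega_x)_x\omega_t$ integrates by parts to $-\tfrac12\tfrac{d}{dt}\int p'(\bar v)\omega_x^2\,dx+\tfrac12\int p''(\bar v)\bar v_t\omega_x^2\,dx$ plus the boundary term $[p'(\bar v)\omega_x\omega_t]_{x=0}$, which vanishes because $\omega_t(0,t)=0$; the damping term is sign-definite and $-\int p'(\bar v)\omega_x^2\,dx\sim\|\omega_x\|^2$. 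In $\int F\omega_t\,dx$ the linear pieces $\tfrac1\alpha(1+t)^\lambda p(\bar v)_{xt}$ and $\tfrac\lambda\alpha(1+t)^{\lambda-1}p(\bar v)_x$ of \eqref{1.21} are bounded in $L^2(\mathbb{R}^+)$ by Proposition \ref{Prop a1} (uniformly on $[0,T]$, up to factors $(1+T)^\lambda$), while the nonlinear piece $(p(\omega_x+\bar v+\hat v)-p(\bar v)-p'(\bar v)\omega_x)_x$, after one integration by parts onto $\omega_{xt}$, is quadratic in $(\omega_x,\hat v)$ times $\omega_{xx}$ or $\hat v_x$, hence absorbed into the dissipation using the a~priori smallness of $\|\omega_x\|_{L^\infty}$, Proposition \ref{Prop a2} for $\hat v$, and Young's inequality. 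Together with $\tfrac{d}{dt}\|\omega\|^2\le\|\omega\|^2+\|\omega_t\|^2$ this gives the first displayed bound by Gronwall on $[0,T]$.

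At the next two levels, differentiate \eqref{1.20} once, respectively twice, in $x$, multiply by $\omega_{xt}$, respectively $\omega_{xxt}$, and integrate. The commutator terms from differentiating $p'(\bar v)$ carry factors $\bar v_x,\bar v_{xx},\bar v_{xxx}$, all bounded on $[0,T]$ by Proposition \ref{Prop a1}, and the nonlinearity is again controlled by smallness, so the interior terms present no new difficulty. The genuine point of care is the boundary: $\omega_{xt}(0,t)$ and $\omega_{xxt}(0,t)$ need not vanish, so traces such as $[p'(\bar v)\omega_{xx}\omega_{xt}]_{x=0}$ must be dealt with. The remedy is structural: since $\omega(0,t)\equiv0$ forces $\omega_t(0,t)=\omega_{tt}(0,t)=0$, evaluating \eqref{1.20} at $x=0$ gives $(p'(\bar v)\omega_x)_x(0,t)=F(0,t)$, and differentiating this relation and the equation in $t$ pins down the remaining boundary traces; each such term is then estimated by the trace inequality $|g(0)|^2\le C\|g\|\,\|g_x\|$ and absorbed. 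Gronwall on $[0,T]$ then closes the second and third displayed bounds in turn. The main obstacle is precisely this handling of the nonzero higher-order boundary traces at $x=0$ in the $H^2$- and $H^3$-level estimates; once one exploits that the identity $\omega(0,t)\equiv0$ propagates to annihilate $\omega_t(0,t)$ and $\omega_{tt}(0,t)$ and hence determines $(p'(\bar v)\omega_x)_x(0,t)$ through the equation, the rest is a routine quasilinear energy argument with $T$-dependent constants.
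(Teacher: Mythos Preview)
Your outline is sound and essentially matches what the paper intends: it omits the proof of Proposition~\ref{prop 3} entirely, saying only that it is ``quite similar to those in Section~3 of \cite{Cui-Yin-Zhang-Zhu2016}'' (the Cauchy problem), so the scheme you describe---local solvability by iteration on the linearized damped wave equation, then $T$-dependent energy bounds closed by Gronwall, order by order in $x$---is exactly the right template.

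The one place where you work harder than necessary is the boundary. You flag the traces $[p'(\bar v)\omega_{xx}\omega_{xt}]_{x=0}$ and their analogues at the $H^3$ level as the main obstacle and propose to control them via the trace inequality after reading off $(p'(\bar v)\omega_x)_x(0,t)=F(0,t)$ from the equation. In fact that very identity, once you expand $F(0,t)$ using $\bar v_x(0,t)=0$ (which follows from $\bar u(0,t)=0$ and \eqref{a1.5}) and $\hat v(0,t)=\hat v_x(0,t)=0$ (since $m_0$ is compactly supported in the open half-line), collapses to $p'(\omega_x+\bar v)(0,t)\,\omega_{xx}(0,t)=0$, hence $\omega_{xx}(0,t)\equiv 0$ and therefore also $\omega_{xxt}(0,t)\equiv 0$. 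The paper records this in \eqref{bou}. With these in hand, all boundary contributions in the $H^2$- and $H^3$-level energy identities vanish outright, so no trace-absorption argument is needed and the estimates proceed exactly as in the Cauchy case. Your route would still close, but recognizing $\omega_{xx}(0,t)=0$ up front is both cleaner and is the device the paper relies on throughout Section~\ref{S2.3}.
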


Now we devote ourselves to the estimates of the global solution $(\omega,z)(x,t)$ under the {\it a priori} assumption
\begin{equation}\label{xian}
N(T):=\sup_{0<t<T} \bigg\{\| \omega_x(\cdot,t)\|_{L^\infty}+ (1+t)\| \omega_{xt}(\cdot,t)\|_{L^\infty}+ (1+t)^{\frac{\lambda+1}{2}}\| \omega_{xx}(\cdot,t)\|_{L^\infty}\bigg\}\leq \epsilon,
\end{equation}
for some $0<\epsilon\ll 1$ and $0<T<\infty$. Throughout this subsection all estimates are independent  of $T$.

It can be checked  that
\begin{equation}\label{bou}
\left\{
\begin{split}
&
 \omega(0,t)=\omega_{xx}(0,t)=\omega_{t}(0,t)=\omega_{txx}(0,t)
 =0,
 \qquad\quad \qquad \mbox{etc},\\
 & \omega(\infty,t)=\omega_{x}(\infty,t)=\omega_{t}(\infty,t)=\omega_{xx}(\infty,t)
 =0,
 \qquad \qquad \mbox{etc}.
 \end{split} \right.
\end{equation}
We need  the following lemma concerning the lower order  estimates on $(\omega,\omega_t)$.
\begin{lemma}\label{Lemma 1}
Under the assumptions of Theorems \ref{Thm 1}-\ref{Thm 1-1}, if $\epsilon, \delta$ are small, it holds that
\begin{equation}\label{3.1a}
\begin{split}
      &
       \|\omega\|^2+(1+t)^{2\lambda}(\|\omega_t\|^2+\|\omega_x\|^2)
       +
       \int_0^t(1+s)^{\lambda}\big(\|\omega_t\|^2+\|\omega_x\|^2\big)ds\\
\leq
 &
   C( \|\omega_0\|_{H^1}^2+\|z_0 \|^2+\delta),  \qquad \mbox{for any} \qquad 0\leq\lambda<\frac{3}{5},
\end{split}
\end{equation}
and for $\frac{3}{5}<\lambda<1$
\begin{equation}\label{3.1b}
\left\{
\begin{split}
&      (1+t)^{\frac{3}{2}-\frac{5\lambda}{2}} \|\omega\|^2
       +
       (1+t)^{\frac{3}{2}-\frac{\lambda}{2}}(\|\omega_x\|^2+\|\omega_t\|^2)
       \leq
   C( \|V_0\|_{H^1}^2+\|z_0 \|^2+\delta),\\
&
       \int_0^t\bigg[(1+s)^{\beta-\lambda-1}\|\omega\|^2+(1+s)^{\beta}\big(\|\omega_x\|^2+\|\omega_t\|^2\big)
       \bigg]ds\\
&\qquad \leq
   C(1+t)^{\beta+\frac{3\lambda}{2}-\frac{3}{2}}( \|\omega_0\|_{H^1}^2+\|z_0 \|^2+\delta),\quad  \mbox{for any }\quad  \frac{3}{2}-\frac{3\lambda}{2}<\beta<\lambda.
\end{split} \right.
\end{equation}
\end{lemma}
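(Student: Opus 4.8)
The plan is to run the weighted energy method for the damped wave equation \eqref{1.20} along the lines of \cite{Cui-Yin-Zhang-Zhu2016}, exploiting the homogeneous data $\omega(0,t)=\omega_t(0,t)=0$ recorded in \eqref{bou} so that every boundary term produced by integration by parts drops out. Concretely I would use the two basic multipliers $(1+t)^{2\lambda}\omega_t$ and $\eta(1+t)^{\lambda}\omega$ with $\eta>0$ small. Testing \eqref{1.20} with $(1+t)^{2\lambda}\omega_t$ gives, on the left, $\frac12\frac{\rm d}{{\rm d}t}\int(1+t)^{2\lambda}(\omega_t^2-p'(\bar v)\omega_x^2)\,dx+\alpha\int(1+t)^{\lambda}\omega_t^2\,dx$, and on the right the errors $\lambda(1+t)^{2\lambda-1}\omega_t^2$, $-\lambda(1+t)^{2\lambda-1}p'(\bar v)\omega_x^2$, $-\tfrac12(1+t)^{2\lambda}p''(\bar v)\bar v_t\omega_x^2$ and the forcing $\int(1+t)^{2\lambda}F\omega_t\,dx$. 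Testing with $\eta(1+t)^{\lambda}\omega$ produces, after one integration by parts, the good dissipation $c\eta\int(1+t)^{\lambda}\omega_x^2\,dx$, together with $\eta\int(1+t)^{\lambda}\omega_t^2\,dx$, a cross term that I rewrite as $\frac{\rm d}{{\rm d}t}$ of $\eta(1+t)^{\lambda}\omega_t\omega+\tfrac{\alpha\eta}{2}\omega^2$ modulo time-integrable remainders of the form $(1+t)^{\lambda-2}\omega^2$, and the forcing $\eta\int(1+t)^{\lambda}F\omega\,dx$. Adding the two identities with $\eta$ small makes $\eta\int(1+t)^{\lambda}\omega_t^2$ absorbable into $\alpha\int(1+t)^{\lambda}\omega_t^2$, and since $-p'(\bar v)\ge C_0>0$ one gets a Lyapunov functional $\mathcal E(t)\sim(1+t)^{2\lambda}(\|\omega_t\|^2+\|\omega_x\|^2)+\|\omega\|^2$ satisfying $\frac{\rm d}{{\rm d}t}\mathcal E+c(1+t)^{\lambda}(\|\omega_t\|^2+\|\omega_x\|^2)\le C(1+t)^{-\kappa}\|\omega\|^2+(\text{forcing})$, the errors carrying either the vanishing factor $(1+t)^{\lambda-1}$ (valid since $\lambda<1$) or a $\delta$-small factor coming from $\bar v_t$.

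For the forcing I would invoke Proposition \ref{Prop a1}: with $k=1$, $p=2$ it gives $\|p(\bar v)_x\|\le C\delta(1+t)^{-3(\lambda+1)/4}$ and $\|p(\bar v)_{xt}\|\le C\delta(1+t)^{-3(\lambda+1)/4-1}$, so the linear part $\frac1\alpha(1+t)^{\lambda}p(\bar v)_{xt}+\frac\lambda\alpha(1+t)^{\lambda-1}p(\bar v)_x$ of $F$ in \eqref{1.21} satisfies $\|\cdot\|\le C\delta(1+t)^{(\lambda-7)/4}$; the correction $\hat v,\hat v_t$ are exponentially small by Proposition \ref{Prop a2}; and the remainder $(p(\omega_x+\bar v+\hat v)-p(\bar v)-p'(\bar v)\omega_x)_x$, once expanded, is of order $|\omega_x\omega_{xx}|+(|\bar v_x|+|\hat v_x|)|\omega_x|+\dots$ and is controlled by the a priori bound $N(T)\le\epsilon$ from \eqref{xian} and by $\|\bar v-v_+\|_{W^{1,\infty}}\le C\delta$. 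Hence, after a Cauchy–Schwarz split, the forcing contribution to $\frac{\rm d}{{\rm d}t}\mathcal E$ is bounded by $C(1+t)^{-\kappa}\|\omega\|^2+C\delta^2(1+t)^{2\lambda+\kappa+(\lambda-7)/2}+(\text{small})\,(1+t)^{\lambda}(\|\omega_t\|^2+\|\omega_x\|^2)$, the last piece going into the dissipation.

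Then I would run Gronwall's inequality. For $0\le\lambda<\tfrac35$ one can choose $\kappa$ with $1<\kappa<\tfrac52-\tfrac52\lambda$; then both $(1+t)^{-\kappa}$ and $(1+t)^{2\lambda+\kappa+(\lambda-7)/2}$ are time-integrable, and Gronwall yields $\mathcal E(t)\le C(\|\omega_0\|_{H^1}^2+\|z_0\|^2+\delta)$ together with the dissipation integral $\int_0^t(1+s)^{\lambda}(\|\omega_t\|^2+\|\omega_x\|^2)\,ds$, i.e.\ exactly \eqref{3.1a} (the $+\delta$ arising from $\int_0^\infty\delta^2(1+s)^{\cdots}\,ds\le C\delta^2\le C\delta$). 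For $\tfrac35<\lambda<1$ the admissible window for $\kappa$ closes, so uniform bounds are impossible; instead I would redo the two‑multiplier computation with weights tuned to the target rates, $(1+t)^{\frac32-\frac\lambda2}$ on $\|\omega_x\|^2+\|\omega_t\|^2$ and $(1+t)^{\frac32-\frac{5\lambda}2}$ on $\|\omega\|^2$ (these exponents being dictated by balancing the degenerate dissipation $(1+t)^{-\lambda}$ against $\|F\|\sim\delta(1+t)^{(\lambda-7)/4}$), which gives the first line of \eqref{3.1b}; for the space–time norms I multiply the resulting differential inequality by $(1+t)^{\beta-\lambda}$ and integrate, which for any $\beta\in(\frac32-\frac{3\lambda}2,\lambda)$ keeps all exponents above $-1$ and produces the growth factor $(1+t)^{\beta+\frac{3\lambda}2-\frac32}$ on the right.

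The main obstacle, and the source of the cut‑off $\lambda=\tfrac35$, is precisely that the diffusion wave $(\bar v,\bar u)$ of \eqref{a1.5} is not self‑similar, so by Proposition \ref{Prop a1} its first $x$‑derivative decays only at the borderline $L^2$ rate $(1+t)^{-3(\lambda+1)/4}$, which forces $F$ to decay merely like $(1+t)^{(\lambda-7)/4}$; reconciling this slow forcing with the time‑dependent dissipation $(1+t)^{-\lambda}$ is what requires $\kappa>1$, possible only when $\lambda<\tfrac35$, and beyond that one must accept the $(1+t)^{\varepsilon}$ loss at $\lambda=\tfrac35$ and genuine polynomial growth for $\lambda>\tfrac35$. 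On the technical side the delicate point is keeping the weights consistent between the $(1+t)^{2\lambda}\omega_t$ and $(1+t)^{\lambda}\omega$ tests and checking that every lower‑order term generated by differentiating the weights or by the variable coefficient $p'(\bar v)$ carries either a vanishing factor $(1+t)^{\lambda-1}$ or a small factor $\epsilon+\delta$; the rest is bookkeeping parallel to Lemma \ref{Lemma a1}.
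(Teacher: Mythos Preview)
Your proposal is correct and follows essentially the same weighted two–multiplier scheme as the paper: the paper also tests \eqref{1.20} with $(1+t)^{\beta}\omega$ and $(1+t)^{\beta+\lambda}\omega_t$ (with $\beta=\lambda$ in Case~1), combines them with a large constant on the $\omega_t$–identity (equivalent to your small $\eta$ on the $\omega$–identity), isolates the same source term $C\delta^2(1+t)^{2\lambda+\kappa+(\lambda-7)/2}$ forcing $1<\kappa<\tfrac52-\tfrac{5\lambda}{2}$, and closes by Gronwall after a large–time threshold $T_0$ plus the local bounds of Proposition~\ref{prop 3}. Two organizational differences are worth noting. First, for the quadratic remainder $(p(\omega_x+\bar v+\hat v)-p(\bar v)-p'(\bar v)\omega_x)_x$ in the $\omega_t$–test the paper integrates by parts in $x$ and recognizes the result as a total time derivative of $\int_{\bar v}^{\omega_x+\bar v+\hat v}p(s)\,ds-p(\bar v)\omega_x-\tfrac12 p'(\bar v)\omega_x^2$ (see \eqref{3.13}); your direct expansion works too but relies on the a~priori bound $\|\omega_{xx}\|_{L^\infty}\le\epsilon(1+t)^{-(\lambda+1)/2}$ from \eqref{xian}, which the paper's trick avoids. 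Second, for $\tfrac35<\lambda<1$ the paper runs the whole computation with a general weight exponent $\beta\in(\tfrac32-\tfrac{3\lambda}{2},\lambda)$ from the outset; the choice $\beta<\lambda$ then produces the extra good term $\tfrac{\alpha(\lambda-\beta)}{2}\int(1+t)^{\beta-\lambda-1}\omega^2$, which absorbs the $(1+t)^{-\kappa}\|\omega\|^2$ contribution by taking $\kappa=\lambda-\beta+1$, and both the pointwise and space–time bounds in \eqref{3.1b} fall out simultaneously. Your sketch for this range is correct in spirit, but note that aiming directly at the endpoint weights $(1+t)^{\tfrac32-\tfrac{\lambda}{2}}$ and $(1+t)^{\tfrac32-\tfrac{5\lambda}{2}}$ corresponds to $\beta=\tfrac32-\tfrac{3\lambda}{2}$, where the source $(1+t)^{\beta+\tfrac{3\lambda}{2}-\tfrac52}=(1+t)^{-1}$ just fails to be integrable; you must take $\beta$ strictly inside the open interval, exactly as the paper does.
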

\begin{proof}
Multiplying \eqref{1.20} by $(1+t)^\beta \omega$, integrating the
resulting  equality with respect to $x$ over $\mathbb{R}^+ $, using the boundary condition  \eqref{bou}, one obtains
\begin{equation}\label{3.3}
\begin{split}
&
\frac{\rm d}{{\rm d}t} \int_0^\infty
 \left[
   (1+t)^{\beta}\omega \omega_t
    +
   \frac{\alpha}{2}(1+t)^{\beta-\lambda}\omega^2
 \right]dx
  - \int_0^\infty(1+t)^{\beta} p'(\bar v)\omega_x^2dx\\&+
     \frac{\alpha(\lambda-\beta)}{2}\int_0^\infty(1+t)^{\beta-\lambda-1}\omega^2dx
  \\
=
 &
  \int_0^\infty(1+t)^{\beta} \omega_t^2dx+\frac{\rm d}{{\rm d}t} \int_0^\infty
   \frac{\beta}{2}(1+t)^{\beta-1}\omega^2dx
    +
     \frac{\beta(1-\beta)}{2}\int_0^\infty(1+t)^{\beta-2}\omega^2dx\\
     &
     +\int_0^\infty (1+t)^{\beta} F\omega dx.
\end{split}
\end{equation}
Now we estimate the last term in the  right hand of \eqref{3.3} as follows:
\begin{equation}\label{3.4}
\begin{split}
\int_0^\infty (1+t)^{\beta} F\omega dx
=\int_0^\infty &\bigg[\frac{1}{\alpha}(1+t)^{\beta+\lambda} p(\bar v)_{xt}
      +
       \frac{\lambda}{\alpha} { (1+t)^{\beta+\lambda-1} } { p(\bar v)_x}
       \\
         & -
            (1+t)^\beta\big(p(\omega_x+\bar v+\hat v)-p(\bar{v})-p'(\bar v)\omega_x\big)_x\bigg]\omega dx.
\end{split}
\end{equation}
Firstly, from Proposition \ref{Prop a1}  and \eqref{bou},   we have
\begin{equation}\label{3.5}
\begin{split}
\int_0^\infty \frac{1}{\alpha}(1+t)^{\beta+\lambda} p(\bar v)_{xt}\omega dx
=&-\int_0^\infty \frac{1}{\alpha}(1+t)^{\beta+\lambda} p(\bar v)_{t}\omega_xdx\\
\leq& \frac{C_0}{8}\int_0^\infty(1+t)^{\beta}  \omega_x^2dx+C\int_0^\infty
(1+t)^{\beta+2\lambda}|\bar v_t|^2dx \\  \leq&
\frac{C_0}{8}\int_0^\infty(1+t)^{\beta}
\omega_x^2dx+C\delta^2(1+t)^{\beta+\frac{3\lambda}{2}-\frac{5}{2}},
\end{split}
\end{equation}
and for some constants $\kappa>1,\nu>0$ which will be determined below
\begin{equation}\label{3.6}
\begin{split}
  \int_0^\infty \frac{\lambda}{\alpha} { (1+t)^{\beta+\lambda-1} } { p(\bar
  v)_x}\omega dx
\leq& \nu\int_0^\infty(1+t)^{-\kappa}  \omega^2dx
    +
      C\int_0^\infty (1+t)^{2\beta+2\lambda+\kappa-2}|\bar v_x|^2dx \\
       \leq&
         \nu\int_0^\infty(1+t)^{-\kappa} \omega^2dx
             +\frac{C\delta^2}{\nu}(1+t)^{2\beta+\frac{\lambda}{2}+\kappa-\frac{7}{2}}.
\end{split}
\end{equation}
Secondly, by using Propositions \ref{Prop a1}-\ref{Prop a2}, \eqref{bou} and the {\it a priori} assumption \eqref{xian}, we have
\begin{equation}\label{3.7}
\begin{split}
&
-\int_0^\infty (1+t)^\beta\big(p(\omega_x+\bar v+\hat v)-p(\bar{v})-p'(\bar v)\omega_x\big)_x\omega dx\\
   =
    &
   \int_0^\infty (1+t)^\beta\big(p(\omega_x+\bar v+\hat v)-p(\bar{v})-p'(\bar v)\omega_x\big)\omega_xdx\\
    \leq
       &
       \frac{C_0}{8}\int_0^\infty (1+t)^{\beta}  \omega_x^2dx
       +
       C\int_0^\infty (1+t)^{\beta}(|\hat v|^2+|\omega_x|^4)dx \\
          \leq
           &
             \frac{C_0}{8}
             \int_0^\infty (1+t)^{\beta}
             \omega_x^2dx
             +
             C\epsilon^2\int_0^\infty (1+t)^{\beta}  \omega_x^2dx
             +
             C\delta^2(1+t)^{\beta+\frac{3\lambda}{2}-\frac{5}{2}}.
\end{split}
\end{equation}
Substituting \eqref{3.4}-\eqref{3.7} into  \eqref{3.3}, and using the smallness of $\epsilon$, we have
\begin{equation}\label{3.8}
\begin{split}
&
\frac{\rm d}{{\rm d}t} \int_0^\infty
 \left[
   (1+t)^{\beta}\omega \omega_t
   +
   \frac{\alpha}{2}(1+t)^{\beta-\lambda}\omega^2
 \right]dx
  +\frac{C_0}{2}\int_0^\infty (1+t)^{\beta}\omega_x^2dx\\&+
     \frac{\alpha(\lambda-\beta)}{2}\int_0^\infty(1+t)^{\beta-\lambda-1}\omega^2dx
  \\
\leq
 &
  \int_0^\infty  (1+t)^{\beta} \omega_t^2 dx+  \frac{\rm d}{{\rm d}t} \int_0^\infty
   \frac{\beta}{2}(1+t)^{\beta-1}\omega^2dx
    +
     \frac{\beta(1-\beta)}{2}\int_0^\infty(1+t)^{\beta-2}  \omega^2dx\\
     &
       +
         \nu\int_0^\infty(1+t)^{-\kappa} \omega^2dx +
         C\delta^2(1+t)^{\beta+\frac{3\lambda}{2}-\frac{5}{2}}
             +\frac{C\delta^2}{\nu}(1+t)^{2\beta+\frac{\lambda}{2}+\kappa-\frac{7}{2}}.
\end{split}
\end{equation}
Multiplying \eqref{1.20} by $(1+t)^{\beta+\lambda} \omega_t$, and
integrating the resulting equality over $\mathbb{R}^+$, we see that
\begin{equation}\label{3.9}
\begin{split}
&
\frac{1}{2}\frac{\rm d}{{\rm d}t} \int_0^\infty
 \left[
   (1+t)^{\beta+\lambda}\omega_t^2-(1+t)^{\beta+\lambda}p'(\bar v)\omega_x^2
 \right]dx
  + \int_0^\infty\alpha(1+t)^{\beta}\omega_t^2dx
  \\
=
 &
  -
    \frac{1}{2}\int_0^\infty (1+t)^{\beta+\lambda} p''(\bar v)\bar v_t\omega_x^2dx
    -
     \frac{\beta+\lambda}{2}\int_0^\infty(1+t)^{\beta+\lambda-1}p'(\bar v)\omega_x^2 dx\\
     &+\frac{\beta+\lambda }{2}\int_0^\infty(1+t)^{\beta+\lambda-1}\omega_t^2dx
     +
       \int_0^\infty (1+t)^{\beta+\lambda} F\omega_tdx\\
        \leq
           &
            C\int_0^\infty(1+t)^{\beta+\lambda-1}(\omega_x^2+\omega_t^2) dx
     +
       \int_0^\infty (1+t)^{\beta+\lambda} F\omega_tdx.
\end{split}
\end{equation}
Now we estimate the last term in the  right hand of \eqref{3.9} as follows:
\begin{equation}\label{3.10}
\begin{split}
\int_0^\infty (1+t)^{\beta+\lambda} F\omega_tdx = & \int_0^\infty
\bigg[\frac{1}{\alpha}(1+t)^{\beta+2\lambda} p(\bar v)_{xt}
      +
       \frac{\lambda}{\alpha} { (1+t)^{\beta+2\lambda-1} } { p(\bar v)_x}\\
          &\qquad -
            (1+t)^{\beta+\lambda}\big(p(\omega_x+\bar v+\hat v)-p(\bar{v})-p'(\bar v)\omega_x\big)_x\bigg]\omega_tdx.
\end{split}
\end{equation}
Firstly, applying Proposition \ref{Prop a1}, one gets
\begin{equation}\label{3.11}
\begin{split}
&
 \int_0^\infty \frac{1}{\alpha}(1+t)^{\beta+2\lambda} p(\bar v)_{xt}\omega_tdx
   \\
    \leq
     &
       \frac{\alpha}{4}\int_0^\infty(1+t)^{\beta}  \omega_t^2dx
       +
       C\int_0^\infty (1+t)^{\beta+4\lambda}(|\bar v_{xt}|^2+|\bar v_x|^2 |\bar v_t|^2)dx \\
          \leq
            &
            \frac{\alpha}{4}\int_0^\infty(1+t)^{\beta}  \omega_t^2dx
            +
            C\delta^2(1+t)^{\beta+\frac{5\lambda}{2}-\frac{7}{2}},
\end{split}
\end{equation}
and
\begin{equation}\label{3.12}
\begin{split}
&\int_0^\infty \frac{\lambda}{\alpha} { (1+t)^{\beta+2\lambda-1} } { p(\bar
v)_x}\omega_tdx \\
\leq&
      \frac{\alpha}{4}\int_0^\infty (1+t)^{\beta}  \omega_t^2dx
      +
      C\int_0^\infty (1+t)^{\beta+4\lambda-2}|\bar v_x|^2dx \\
\leq&
            \frac{\alpha}{4}\int_0^\infty(1+t)^{\beta}  \omega_t^2dx+C\delta^2(1+t)^{\beta+\frac{5\lambda}{2}-\frac{7}{2}}.
\end{split}
\end{equation}
Next by using Propositions \ref{Prop a1}-\ref{Prop a2} and the {\it a
priori} assumption \eqref{xian}, we have
\begin{equation}\label{3.13}
\begin{split}
   &
    -\int_0^\infty (1+t)^{\beta+\lambda}\big(p(\omega_x+\bar v+\hat v)-p(\bar{v})-p'(\bar v)\omega_x\big)_x\omega_tdx\\
      =
       &
        \int_0^\infty (1+t)^{\beta+\lambda}
        \big(
        p(\omega_x+\bar v+\hat v)-p(\bar{v})-p'(\bar v)\omega_x
        \big)
        \omega_{xt}dx
        \\
         =
         &
           (1+t)^{\beta+\lambda}\frac{\rm d}{{\rm d}t}\int_0^\infty
           \bigg(
           \int_{\bar v}^{\omega_x+\bar v+\hat v} p(s)ds
           -
           p(\bar v)\omega_x
           -
           \frac{p'(\bar v)}{2}\omega_x^2 \bigg)dx\\
           &
           -
            \int_0^\infty(1+t)^{\beta+\lambda} \bar v_t\Big(p(\omega_x+\bar v+\hat v)-p(\bar{v})-p'(\bar v)\omega_x-\frac{p''(\bar v)}{2}\omega_x^2\Big)dx\\
            &
            -
              \int_0^\infty(1+t)^{\beta+\lambda} p(\omega_x+\bar v+\hat v)\hat v_t dx\\
              \leq
              &
                \frac{\rm d}{{\rm d}t}
                \int_0^\infty
                (1+t)^{\beta+\lambda}\bigg(
                \int_{\bar v}^{\omega_x+\bar v+\hat v} p(s)ds
                -
                p(\bar v)\omega_x
                -
                \frac{p'(\bar v)}{2}\omega_x^2
                \bigg)dx \\
        &
        -
          (\beta+\lambda)\int_0^\infty (1+t)^{\beta+\lambda-1}  \bigg(  \int_{\bar v}^{\omega_x+\bar v+\hat v} p(s)ds-p(\bar v)\omega_x-\frac{p'(\bar v)}{2}\omega_x^2 \bigg)dx\\
          &
           +C\delta\int_0^\infty (1+t)^{\beta+\lambda-1}(|\hat v|+|\omega_x|^3)dx
           + C\int_0^\infty (1+t)^{\beta+\lambda} |\hat v_t| dx \\
            \leq
              &
               \frac{\rm d}{{\rm d}t}\int_0^\infty (1+t)^{\beta+\lambda}\bigg(  \int_{\bar v}^{\omega_x+\bar v+\hat v} p(s)ds-p(\bar v)\omega_x-\frac{p'(\bar v)}{2}\omega_x^2 \bigg)dx\\
               &
               +C(\delta+\epsilon)\int_0^\infty(1+t)^{\beta+\lambda-1}  \omega_x^2dx+C\delta(1+t)^{\beta+\frac{5\lambda}{2}-\frac{7}{2}}.
\end{split}
\end{equation}
Putting  \eqref{3.10}-\eqref{3.13} into \eqref{3.9} implys
\begin{equation}\label{3.14}
\begin{split}
&
\frac{1}{2}\frac{\rm d}{{\rm d}t} \int_0^\infty
 \left[
   (1+t)^{\beta+\lambda}\omega_t^2-(1+t)^{\beta+\lambda}p'(\bar v)\omega_x^2
 \right]dx
  + \frac{\alpha}{2}\int_0^\infty (1+t)^{\beta}\omega_t^2dx
  \\
\leq&
     \frac{\rm d}{{\rm d}t}\int_0^\infty (1+t)^{\beta+\lambda}\bigg(  \int_{\bar v}^{\omega_x+\bar v+\hat v} p(s)ds-p(\bar v)\omega_x-\frac{p'(\bar v)}{2}\omega_x^2 \bigg)dx\\
   &
   +
    C \int_0^\infty  (1+t)^{\beta+\lambda-1} (\omega_x^2+\omega_t^2) dx
        +
         C\delta(1+t)^{\beta+\frac{5\lambda}{2}-\frac{7}{2}}.
\end{split}
\end{equation}
Multiplying \eqref{3.14} by $h$, and adding up the resulting
inequality and \eqref{3.8}, we get
\begin{equation}\label{3.15}
\begin{split}
&
\frac{\rm d}{{\rm d}t} \int_0^\infty
 \bigg[(1+t)^{\beta}\omega\omega_t
 +
  \frac{\alpha}{2}(1+t)^{\beta-\lambda}\omega^2
 +
 \frac{h}{2}
   (1+t)^{\beta+\lambda}\omega_t^2-\frac{h}{2}(1+t)^{\beta+\lambda}p'(\bar v)\omega_x^2
 \bigg]dx\\
  &
    +
    \frac{C_0}{2}\int_0^\infty (1+t)^{\beta}\omega_x^2dx
      +
        \left( \frac{\alpha h}{2}-1\right)\int_0^\infty (1+t)^{\beta}\omega_t^2dx+
     \frac{\alpha(\lambda-\beta)}{2}\int_0^\infty(1+t)^{\beta-\lambda-1}\omega^2dx
         \\
        \leq
        &
          h\frac{\rm d}{{\rm d}t}\int_0^\infty (1+t)^{\beta+\lambda}\bigg(  \int_{\bar v}^{\omega_x+\bar v+\hat v} p(s)ds-p(\bar v)\omega_x-\frac{p'(\bar v)}{2}\omega_x^2 \bigg)dx + \frac{\beta}{2}\frac{\rm d}{{\rm d}t} \int_0^\infty
(1+t)^{\beta-1}\omega^2dx\\
     &
    +
     \nu\int_0^\infty(1+t)^{-\kappa}  \omega^2dx+
      Ch \int_0^\infty  (1+t)^{\beta+\lambda-1} (\omega_x^2+\omega_t^2) dx+
     \frac{\beta(1-\beta)}{2}\int_0^\infty(1+t)^{\beta-2}  \omega^2dx\\
     &
          +
         C\delta^2(1+t)^{\beta+\frac{3\lambda}{2}-\frac{5}{2}}
             +\frac{C\delta^2}{\nu}(1+t)^{2\beta+\frac{\lambda}{2}+\kappa-\frac{7}{2}}.
\end{split}
\end{equation}
{\bf Case 1. $0\leq\lambda<\frac{3}{5}$}

It is easy to know that $1<\frac{5}{2}-\frac{5\lambda}{2}$. Therefore we can take $\beta=\lambda$, $\nu=1$, and there exists constant $\kappa$ satisfying $1<\kappa<\frac{5}{2}-\frac{5\lambda}{2}$. Then we have
\begin{align}\label{3.15c1}
&
\frac{\rm d}{{\rm d}t} \int_0^\infty
 \bigg[(1+t)^{\lambda}\omega\omega_t
 +
  \frac{\alpha}{2} \omega^2
 +
 \frac{h}{2}
   (1+t)^{2\lambda}\omega_t^2-\frac{h}{2}(1+t)^{2\lambda}p'(\bar v)\omega_x^2
 \bigg]dx\nonumber\\
  &
    +
    \frac{C_0}{2}\int_0^\infty(1+t)^{\lambda}\omega_x^2dx
      +
        \left( \frac{\alpha h}{2}-1\right)\int_0^\infty (1+t)^{\lambda}\omega_t^2dx\nonumber
         \\
        \leq
        &
          h\frac{\rm d}{{\rm d}t}\int_0^\infty (1+t)^{2\lambda}\bigg(  \int_{\bar v}^{\omega_x+\bar v+\hat v} p(s)ds-p(\bar v)\omega_x-\frac{p'(\bar v)}{2}\omega_x^2 \bigg)dx
          +\frac{\lambda}{2}\frac{\rm d}{{\rm d}t} \int_0^\infty(1+t)^{\lambda-1}\omega^2dx\nonumber\\
          &
          +\int_0^\infty(1+t)^{-\kappa}  \omega^2dx
   +
     \frac{\lambda(1-\lambda)}{2}\int_0^\infty(1+t)^{\lambda-2}\omega^2dx
     +
      Ch \int_0^\infty  (1+t)^{2\lambda-1} (\omega_x^2+\omega_t^2) dx\nonumber\\
      &
         +C\delta(1+t)^{\frac{5\lambda}{2}+\kappa-\frac{7}{2}}.
\end{align}
Let  $T_0$ sufficiently large such that
\begin{equation}\notag
\left\{
\begin{array}{l}
Ch(1+t)^{\lambda-1}\leq \frac{C_0}{4},\\[2mm]
Ch(1+t)^{\lambda-1}\leq \frac{1}{2}(\frac{\alpha h}{2}-1),\\[2mm]
\frac{\lambda}{2}(1+t)^{\lambda-1}\leq \frac{1}{4},
\end{array}\right.
\end{equation}
if $t\geq T_0$, and fixing $h=\frac{6}{\alpha}$,  we have
\begin{equation}\notag
\begin{split}
&
\frac{\rm d}{{\rm d}t} \mathcal{H}(t)
    +
    \frac{C_0}{4}\int_0^\infty(1+t)^{\lambda}\omega_x^2dx
      +
        \frac{1}{2}\left(\frac{\alpha h}{2}-1\right)\int_0^\infty (1+t)^{\lambda}\omega_t^2dx
         \\
        \leq
        &
          C\frac{\rm d}{{\rm d}t}\int_0^\infty (1+t)^{2\lambda}\bigg(  \int_{\bar v}^{\omega_x+\bar v+\hat v} p(s)ds-p(\bar v)\omega_x-\frac{p'(\bar v)}{2}\omega_x^2 \bigg)dx
          +\frac{\lambda}{2}\frac{\rm d}{{\rm d}t} \int_0^\infty(1+t)^{\lambda-1}\omega^2dx\\
          &
    +
     C\int_0^\infty(1+t)^{-\kappa}  \omega^2dx+
     \frac{\lambda(1-\lambda)}{2}\int_0^\infty(1+t)^{\lambda-2}\omega^2dx
        +
         C\delta(1+t)^{\frac{5\lambda}{2}-\frac{7}{2}+\kappa},
\end{split}
\end{equation}
where
\begin{equation}\notag
  \mathcal{H}(t)
    \sim
     \|\omega\|^2
      +
       (1+t)^{2\lambda}\|\omega_x\|^2
        +
       (1+t)^{2\lambda}\|\omega_t\|^2.
\end{equation}
Then, we have
\begin{equation}\notag
\begin{split}
&
\frac{\rm d}{{\rm d}t}\mathcal{H}(t)
    +
    \frac{C_0}{4}\int_0^\infty(1+t)^{\lambda}\omega_x^2dx
      +
        \int_0^\infty  (1+t)^{\lambda}\omega_t^2dx
         \\
        \leq
        &
          C\frac{\rm d}{{\rm d}t}\int_0^\infty (1+t)^{2\lambda}\bigg(  \int_{\bar v}^{\omega_x+\bar v+\hat v} p(s)ds-p(\bar v)\omega_x-\frac{p'(\bar v)}{2}\omega_x^2 \bigg)dx+\frac{\lambda}{2}\frac{\rm d}{{\rm d}t} \int_0^\infty(1+t)^{\lambda-1}\omega^2dx\\
     &
    +
     C(1+t)^{-\kappa}  \mathcal{H}(t)+C(1+t)^{\lambda-2}\mathcal{H}(t)
        +
         C\delta(1+t)^{\frac{5\lambda}{2}-\frac{7}{2}+\kappa}, \qquad \mbox{for any}\qquad  t\in [T_0, \infty).
         \end{split}
\end{equation}
Using  Gronwall’s inequality on $[T_0,t]$,  one has by $1<\kappa<\frac{5}{2}-\frac{5\lambda}{2}$ and $0\leq \lambda< \frac{3}{5}$,
\begin{equation}\label{3.18}
\begin{split}
&
 \mathcal{H}(t)
    +
    c\int_{T_0}^t  (1+s)^{\lambda}\big(\|\omega_x\|^2
      +
        \|\omega_t\|^2 \big)ds
         \\
        \leq
        &
          C\int_0^\infty (1+t)^{2\lambda}\bigg(  \int_{\bar v}^{\omega_x+\bar v+\hat v} p(s)ds-p(\bar v)\omega_x-\frac{p'(\bar v)}{2}\omega_x^2 \bigg)dx+\frac{\lambda}{2} \int_0^\infty(1+t)^{\lambda-1}\omega^2dx\\
     &
        +C(\mathcal{H}(T_0)+
         \delta)\\
       \leq
        &
          C\epsilon\int_0^\infty (1+t)^{2\lambda}|\omega_x|^2dx+\frac{1}{4} \int_0^\infty \omega^2dx+C(\mathcal{H}(T_0)+
         \delta),
         \end{split}
\end{equation}
which together with  Proposition \ref{prop 3} deduce \eqref{3.1a} in view of the smallness of $\epsilon$.\\

{\bf Case 2. $\frac{3}{5}<\lambda<1$}

 In this case, from $\lambda>\frac{3}{2}-\frac{3\lambda}{2}$,  we can take  $\frac{3}{2}-\frac{3\lambda}{2}<\beta<\lambda$,
$\kappa=\lambda-\beta+1$, and $\nu= \frac{\alpha(\lambda-\beta)}{4}>0$. Therefore from \eqref{3.15}, we know
\begin{align}\label{3.15c2}
&
\frac{\rm d}{{\rm d}t} \int_0^\infty
 \bigg[(1+t)^{\beta}\omega\omega_t
 +
  \frac{\alpha}{2}(1+t)^{\beta-\lambda}\omega^2
 +
 \frac{h}{2}
   (1+t)^{\beta+\lambda}\omega_t^2-\frac{h}{2}(1+t)^{\beta+\lambda}p'(\bar v)\omega_x^2
 \bigg]dx\nonumber\\
 &+\frac{\alpha(\lambda-\beta)}{4}\int_0^\infty(1+t)^{\beta-\lambda-1}  \omega^2dx
   +
    \frac{C_0}{2}\int_0^\infty(1+t)^{\beta}\omega_x^2dx
      +
        \left(  \frac{\alpha h}{2}-1\right) \int (1+t)^{\beta}\omega_t^2dx\nonumber
         \\
        \leq
        &
          h\frac{\rm d}{{\rm d}t}\int_0^\infty (1+t)^{\beta+\lambda}\bigg(  \int_{\bar v}^{\omega_x+\bar v+\hat v} p(s)ds-p(\bar v)\omega_x-\frac{p'(\bar v)}{2}\omega_x^2 \bigg)dx +\frac{\beta}{2}\frac{\rm d}{{\rm d}t} \int_0^\infty(1+t)^{\beta-1}\omega^2dx\nonumber\\
          &+
     \frac{\beta(1-\beta)}{2}\int_0^\infty(1+t)^{\beta-2}\omega^2dx
     +
      Ch \int_0^\infty  (1+t)^{\beta+\lambda-1} (\omega_x^2+\omega_t^2) dx
        +
         C\delta(1+t)^{\beta+\frac{3\lambda}{2}-\frac{5}{2}}.
\end{align}
Let $h=\frac{6}{\alpha}$, and  $T_1$ sufficiently large such that \begin{equation}\notag
\left\{
\begin{array}{l}
Ch(1+t)^{\lambda-1}\leq \frac{C_0}{4},\\[2mm]
Ch(1+t)^{\lambda-1}\leq \frac{1}{2}\left( \frac{\alpha h}{2}-1\right) ,\\[2mm]
\frac{\beta}{2}(1+t)^{\lambda-1}\leq \frac{1}{4},
\end{array}\right.
\end{equation}
 for $t\geq T_1$, then we have
\begin{equation}\notag
\begin{split}
&
\frac{\rm d}{{\rm d}t} \mathcal{H}_1(t) +\frac{\alpha(\lambda-\beta)}{4}\int_0^\infty(1+t)^{\beta-\lambda-1}  \omega^2dx
    +
    \frac{C_0}{4}\int_0^\infty(1+t)^{\beta}\omega_x^2dx
      +
        \frac{1}{2}\left( \frac{\alpha h}{2}-1\right) \int_0^\infty (1+t)^{\beta}\omega_t^2dx
         \\
        \leq
        &
          C\frac{\rm d}{{\rm d}t}\int_0^\infty (1+t)^{\beta+\lambda}\bigg(  \int_{\bar v}^{\omega_x+\bar v+\hat v} p(s)ds-p(\bar v)\omega_x-\frac{p'(\bar v)}{2}\omega_x^2 \bigg)dx
        +\frac{\beta}{2}\frac{\rm d}{{\rm d}t} \int_0^\infty(1+t)^{\beta-1}\omega^2dx\\
          &
          +
     \frac{\beta(1-\beta)}{2}\int_0^\infty(1+t)^{\beta-2}\omega^2dx
       +
         C\delta(1+t)^{\beta+\frac{3\lambda}{2}-\frac{5}{2}},
\end{split}
\end{equation}
where
\begin{equation}\notag
  \mathcal{H}_1(t)
    \sim
     (1+t)^{\beta-\lambda}\|\omega\|^2 +
       (1+t)^{\beta+\lambda}\|\omega_t\|^2
      +
       (1+t)^{\beta+\lambda}\|\omega_x\|^2
       .
\end{equation}
Therefore, we know
\begin{equation}\notag
\begin{split}
&
\frac{\rm d}{{\rm d}t}\mathcal{H}_1(t)
    +
    \frac{C_0}{4}\int_0^\infty(1+t)^{\beta}\omega_x^2dx
      +
        c\int_0^\infty  (1+t)^{\beta}\omega_t^2dx
        +c\int_0^\infty(1+t)^{\beta-\lambda-1}  \omega^2dx
         \\
        \leq
        &
          C\frac{\rm d}{{\rm d}t}\int_0^\infty (1+t)^{\beta+\lambda}\bigg(  \int_{\bar v}^{\omega_x+\bar v+\hat v} p(s)ds-p(\bar v)\omega_x-\frac{p'(\bar v)}{2}\omega_x^2 \bigg)dx+\frac{\beta}{2}\frac{\rm d}{{\rm d}t} \int_0^\infty(1+t)^{\beta-1}\omega^2dx\\
     &
    +
     C(1+t)^{\lambda-2}  \mathcal{H}_1(t)
        +
         C\delta(1+t)^{\beta+\frac{3\lambda}{2}-\frac{5}{2}}, \qquad \mbox{for any}\qquad  t\in [T_1, \infty).
         \end{split}
\end{equation}
Using  Gronwall’s inequality on $[T_1,t]$,  Proposition \ref{prop 3}, $\beta+\frac{3\lambda}{2}-\frac{5}{2}>-1 $  and $\frac{3}{5}< \lambda< 1$, one derives
\begin{equation}\label{3.182}
\begin{split}
&
 \mathcal{H}_1(t)
    +
    c\int_{T_1}^t \bigg[(1+s)^{\beta-\lambda-1}\|\omega\|^2+ (1+s)^{\beta}\big(\|\omega_x\|^2
      +
        \|\omega_t\|^2\big)\bigg]ds
         \\
        \leq
        &
          C\int_0^\infty (1+t)^{\beta+\lambda}\bigg(  \int_{\bar v}^{\omega_x+\bar v+\hat v} p(s)ds-p(\bar v)\omega_x-\frac{p'(\bar v)}{2}\omega_x^2 \bigg)dx
          +\frac{\beta }{2}\int_0^\infty(1+t)^{\beta-1}\omega^2dx\\
     &
        +\mathcal{H}_1(T_1)+C \delta(1+t)^{\beta+\frac{3\lambda}{2}-\frac{3}{2}}\\
       \leq
        &
          C\epsilon\int_0^\infty (1+t)^{\beta+\lambda}|\omega_x|^2dx+\frac{1 }{4}\int_0^\infty(1+t)^{\beta-\lambda}\omega^2dx
          +\mathcal{H}(T_1)+C\delta(1+t)^{\beta+\frac{3}{2}\lambda-\frac{3}{2}},
         \end{split}
\end{equation}
which together with Proposition \ref{prop 3}  deduce \eqref{3.1b} in view of the smallness of $\epsilon$ and $\beta+\frac{3\lambda}{2}-\frac{3}{2}>0$.

Hence we complete the proof of Lemma \ref{Lemma 1}.
\end{proof}

Furthermore, we can get the  better decay rate of  the functions $\omega_x$ and $\omega_t$ as follows:
\begin{lemma}\label{Lemma 2}
Under the assumptions of Theorems \ref{Thm 1}-\ref{Thm 1-1}, if $\epsilon, \delta$ are small, it holds that
\begin{equation}\label{3.21a}
\begin{split}
      &
      (1+t)^{\lambda+1}(\|\omega_x\|^2+\|\omega_t\|^2)
       +
       \int_0^t(1+s)\|\omega_t\|^2ds\\
\leq
 &
   C( \|\omega_0\|_{H^1}^2+\|z_0 \|^2+\delta),\qquad \mbox{for } \qquad 0\leq\lambda<\frac{3}{5},
\end{split}
\end{equation}
and for $\frac{3}{5}<\lambda<1$
\begin{equation}\label{3.21b}
\left\{
\begin{split}
&
       (1+t)^{\frac{5}{2}-\frac{3\lambda}{2}}(\|\omega_x\|^2+\|\omega_t\|^2)
       \leq
   C( \|\omega_0\|_{H^1}^2+\|z_0 \|^2+\delta),\\
&
       \int_0^t(1+s)^{\beta-\lambda+1}\|\omega_t\|^2ds\\
       &
\qquad \leq
   C(1+t)^{\beta+\frac{3\lambda}{2}-\frac{3}{2}}( \|\omega_0\|_{H^1}^2+\|z_0 \|^2+\delta),\quad  \mbox{for any }\quad  \frac{3}{2}-\frac{3\lambda}{2}<\beta<\lambda.
\end{split} \right.
\end{equation}
\end{lemma}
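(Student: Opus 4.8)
The plan is to perform one further weighted energy estimate on the wave equation \eqref{1.20}, with a strictly higher power of $(1+t)$ than the one used in Lemma \ref{Lemma 1}, and then to close it by feeding into the lower-order and forcing terms the bounds of Lemma \ref{Lemma 1} (the uniform ones \eqref{3.1a} when $0\le\lambda<\frac35$, the growing ones \eqref{3.1b} when $\frac35<\lambda<1$), the diffusion-wave decay rates of Proposition \ref{Prop a1}, and the exponential decay of the correction function from Proposition \ref{Prop a2}.

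Consider first $0\le\lambda<\frac35$. I would multiply \eqref{1.20} by $(1+t)^{\lambda+1}\omega_t$ and integrate over $\mathbb{R}^+$; since $\omega_t(0,t)=0$ by \eqref{bou}, the boundary terms produced by the integration by parts in $x$ vanish, and one is led to
\[
\frac12\frac{\rm d}{{\rm d}t}\int_0^\infty (1+t)^{\lambda+1}\big(\omega_t^2-p'(\bar v)\omega_x^2\big)\,dx+\alpha\int_0^\infty (1+t)\,\omega_t^2\,dx=(\mathrm{l.o.t.})+\int_0^\infty (1+t)^{\lambda+1}F\omega_t\,dx ,
\]
where the lower-order terms are schematically of the form $(1+t)^{\lambda}\omega_t^2$, $(1+t)^{\lambda}|p'(\bar v)|\omega_x^2$ and $(1+t)^{\lambda+1}|\bar v_t|\,\omega_x^2$. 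Splitting time at a fixed $T_0$ (depending only on the data and on $\alpha,\lambda$) and invoking the local estimate of Proposition \ref{prop 3} on $[0,T_0]$, for $t\ge T_0$ the $(1+t)^{\lambda}\omega_t^2$ term is absorbed into $\alpha\int(1+t)\omega_t^2$; the $(1+t)^{\lambda}\omega_x^2$ term is controlled, after integration in $t$, by $\int_0^t(1+s)^\lambda\|\omega_x\|^2\,ds\le C$ from \eqref{3.1a}; and the $\bar v_t$-weighted term is harmless thanks to Proposition \ref{Prop a1}.

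The forcing $\int(1+t)^{\lambda+1}F\omega_t\,dx$ is split according to \eqref{1.21}. For the two linear source pieces I apply Cauchy--Schwarz, putting a small multiple of $(1+t)\omega_t^2$ on the dissipation side; counting exponents with the $L^2$-rates of $\bar v_{xt}$ and $\bar v_x$ from Proposition \ref{Prop a1}, the leftover integrals are of the form $\delta^2\int_0^t(1+s)^{\frac{5\lambda}{2}-\frac52}\,ds$, which converge \emph{precisely when} $\lambda<\frac35$. For the quadratic piece I integrate by parts in $x$ and rewrite $\big(p(\omega_x+\bar v+\hat v)-p(\bar v)-p'(\bar v)\omega_x\big)\omega_{xt}$ as a total time derivative of $\int_{\bar v}^{\omega_x+\bar v+\hat v}p\,ds-p(\bar v)\omega_x-\frac12 p'(\bar v)\omega_x^2$ modulo terms carrying $\bar v_t$ or $\hat v_t$, exactly as in the derivation of \eqref{3.13}; the total-derivative part is moved into the energy functional on the left, which then stays equivalent to $(1+t)^{\lambda+1}(\|\omega_t\|^2+\|\omega_x\|^2)$ for $\epsilon$ small, while the lower-weight remainder is again absorbed via \eqref{3.1a} and the $\bar v_t,\hat v_t$ remainders are handled by Propositions \ref{Prop a1}--\ref{Prop a2} and the \emph{a priori} assumption \eqref{xian}. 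Integrating in $t$ and using $-p'(\bar v)\ge C_0>0$ then yields \eqref{3.21a}. For $\frac35<\lambda<1$ one repeats the scheme with the multiplier $(1+t)^{\mu}\omega_t$, $\mu=\frac52-\frac{3\lambda}{2}$: the lower-order term $(1+t)^{\mu-1}\omega_x^2$ can no longer be absorbed uniformly, but it is controlled by the growing estimate \eqref{3.1b} of Lemma \ref{Lemma 1} (with $\beta$ there chosen close to $\frac32-\frac{3\lambda}{2}$, which is admissible exactly because $\frac32-\frac{3\lambda}{2}<\lambda$, i.e.\ $\lambda>\frac35$), and the same count of exponents identifies $\frac52-\frac{3\lambda}{2}$ as the threshold weight for which this term and the $\bar v_{xt}$-forcing stay of order one; inserting the resulting pointwise decay into a further weighted estimate with the family of weights $(1+t)^{\beta+1}$ (whose dissipation is $(1+t)^{\beta-\lambda+1}\omega_t^2$) produces the second line of \eqref{3.21b}.

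The step I expect to be the main obstacle is the bookkeeping of $\int(1+t)^{\mu}F\omega_t\,dx$: one must choose the Cauchy--Schwarz splittings and keep track of the exact powers of $(1+t)$ so that every contribution of the diffusion-wave derivatives is either time-integrable (for $\lambda<\frac35$) or grows at no worse than the rate appearing on the right of \eqref{3.21b} (for $\lambda>\frac35$), while simultaneously ensuring that the total-derivative part of the nonlinearity is genuinely absorbable into the left-hand energy --- both of which hinge on the smallness of $\epsilon,\delta$ and on the sharp decay rates of Proposition \ref{Prop a1}.
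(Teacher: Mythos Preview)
Your proposal is correct and follows essentially the same route as the paper. The paper's presentation is slightly more economical: instead of re-deriving the weighted $\omega_t$-estimate from \eqref{1.20} with the higher weight, it simply multiplies the differential inequality \eqref{3.14} (already obtained in the proof of Lemma \ref{Lemma 1}) by $(1+t)^{1-\lambda}$, which amounts to exactly the computation you outline with the multiplier $(1+t)^{\beta+1}\omega_t$; the extra commutator terms from the product rule are precisely your ``lower-order terms'' $(1+t)^{\beta}(\omega_t^2+\omega_x^2)$, and they are closed via \eqref{3.1a} or \eqref{3.1b} just as you describe.
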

\begin{proof}
We only prove \eqref{3.21b}, and the proof of   \eqref{3.21a} is similar. In the case of $\frac{3}{5}<\lambda<1$, multiplying  \eqref{3.14} by $(1+t)^{1-\lambda}$ leads to
\begin{equation}\notag
\begin{split}
&
\frac{1}{2}\frac{\rm d}{{\rm d}t} \int_0^\infty
 \left[
   (1+t)^{\beta+1}\omega_t^2-(1+t)^{\beta+1}p'(\bar v)\omega_x^2
 \right]dx
  + \frac{\alpha}{2}\int_0^\infty (1+t)^{\beta-\lambda+1}\omega_t^2dx
  \\
\leq&
     \frac{\rm d}{{\rm d}t}\int_0^\infty (1+t)^{\beta+1}\bigg(  \int_{\bar v}^{\omega_x+\bar v+\hat v} p(s)ds-p(\bar v)\omega_x-\frac{p'(\bar v)}{2}\omega_x^2 \bigg)dx\\
     &
     +C\bigg|\int_0^\infty (1+t)^{\beta}\bigg(  \int_{\bar v}^{\omega_x+\bar v+\hat v} p(s)ds-p(\bar v)\omega_x-\frac{p'(\bar v)}{2}\omega_x^2 \bigg)dx\bigg|\\
     &
     +C\int_0^\infty\big( (1+t)^{\beta}\omega_t^2-(1+t)^{\beta}p'(\bar v)\omega_x^2\big)dx\\
   &
   +
    C \int_0^\infty  (1+t)^{\beta} (\omega_x^2+\omega_t^2) dx
        +
         C\delta(1+t)^{\beta+\frac{3\lambda}{2}-\frac{5}{2}}.
\end{split}
\end{equation}
Integrating the above inequality in $t$ over $(0,t)$, using Lemma \ref{Lemma 1} and $\beta+\frac{3\lambda}{2}>\frac{3}{2}$, we get
\begin{equation}\notag
\begin{split}
&
       (1+t)^{\beta+1}(\|\omega_x\|^2+\|\omega_t\|^2)+ \int_0^t(1+s)^{\beta-\lambda+1}\|\omega_t\|^2ds\\
      \leq
      &
   C(1+t)^{\beta+\frac{3\lambda}{2}-\frac{3}{2}}( \|\omega_0\|_{H^1}^2+\|z_0 \|^2+\delta).
   \end{split}
\end{equation}
This completes the proof of Lemma \ref{Lemma 2}.
\end{proof}

Similarly, we can derive decay rates on the higher
derivatives of the global solution $\omega(x,t)$.
\begin{lemma}\label{Lemma 3}
Under the assumptions of Theorems \ref{Thm 1}-\ref{Thm 1-1}, if $\epsilon, \delta$ are small, it holds that
\begin{equation}\notag
\begin{split}
      &
      (1+t)^{2\lambda+2}(\|\omega_{xx}\|^2+\|\omega_{xt}\|^2)
       +
       \int_0^t\big((1+s)^{2\lambda+1}\|\omega_{xx}\|^2+(1+s)^{\lambda+2}\|\omega_{xt}\|^2\big)ds\\
\leq
 &
   C( \|\omega_0\|_{H^2}^2+\|z_0 \|_{H^1}^2+\delta),\qquad \mbox{for any} \qquad 0\leq\lambda<\frac{3}{5},
\end{split}
\end{equation}
and for $\frac{3}{5}<\lambda<1$
\begin{equation}\notag
\left\{
\begin{split}
&
       (1+t)^{\frac{7}{2}-\frac{\lambda}{2}}(\|\omega_{xx}\|^2+\|\omega_{xt}\|^2)
       \leq
   C( \|\omega_0\|_{H^2}^2+\|z_0 \|_{H^1}^2+\delta),\\
&
       \int_0^t\big(
       (1+s)^{\beta+\lambda+1}\|\omega_{xx}\|^2
       +
       (1+s)^{\beta+2}\|\omega_{xt}\|^2
\big)ds\\
&\qquad \leq
   C(1+t)^{\beta+\frac{3\lambda}{2}-\frac{3}{2}}( \|\omega_0\|_{H^2}^2+\|z_0 \|_{H^1}^2+\delta), \quad  \mbox{for any }\quad  \frac{3}{2}-\frac{3\lambda}{2}<\beta<\lambda.
\end{split} \right.
\end{equation}
\end{lemma}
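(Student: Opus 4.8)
The plan is to repeat, one $x$-derivative higher, the combined argument of Lemmas \ref{Lemma 1} and \ref{Lemma 2}. Differentiate \eqref{1.20} in $x$ to get $\omega_{xtt}+(p'(\bar v)\omega_x)_{xx}+\frac{\alpha}{(1+t)^\lambda}\omega_{xt}=F_x$, with $F$ as in \eqref{1.21}. I would run two weighted energy estimates. First, multiply \eqref{1.20} itself by $-(1+t)^{\beta}\omega_{xx}$ and integrate over $\mathbb{R}^+$ (the analogue of \eqref{a3.40}); after integration by parts — using $\omega(0,t)=\omega_t(0,t)=0$ and $\bar v_x(0,t)=0$ (the latter being the compatibility of \eqref{a1.5} with the Dirichlet condition) so that all boundary contributions drop — this yields $\frac{\rm d}{{\rm d}t}\big[\frac{\alpha}{2}(1+t)^{\beta}\|\omega_x\|^2+\text{l.o.t.}\big]+\frac{C_0}{2}\int_0^\infty(1+t)^{\beta}\omega_{xx}^2dx\le \int_0^\infty(1+t)^{\beta}\omega_{xt}^2dx+(\text{source})+(\text{nonlinear})$. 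Second, multiply the $x$-differentiated equation by $(1+t)^{\beta+\lambda}\omega_{xt}$ and integrate (the analogue of \eqref{3.9}, \eqref{3.14}); here the boundary terms $[(p'(\bar v)\omega_x)_x\omega_{xt}]_{x=0}$ and $[(p'(\bar v)\omega_x)_x\omega_{xxt}]_{x=0}$ vanish because $\omega_{xx}(0,t)=\omega_{txx}(0,t)=0$ by \eqref{bou}, and one obtains $\frac12\frac{\rm d}{{\rm d}t}\int_0^\infty\big[(1+t)^{\beta+\lambda}\omega_{xt}^2-(1+t)^{\beta+\lambda}p'(\bar v)\omega_{xx}^2\big]dx+\frac{\alpha}{2}\int_0^\infty(1+t)^{\beta}\omega_{xt}^2dx\le \frac{\rm d}{{\rm d}t}(\cdots)+C\int_0^\infty(1+t)^{\beta+\lambda-1}(\omega_{xx}^2+\omega_{xt}^2)dx+(\text{source})+(\text{nonlinear})$.

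Next I would dispose of the source and nonlinear terms exactly as in Lemmas \ref{Lemma 1}--\ref{Lemma 2}: each piece of $F$ and $F_x$ is a power of $(1+t)$ times a derivative of $\bar v$ up to $\bar v_{xxt}$, which decays by Proposition \ref{Prop a1} (this is precisely why $V_0\in H^5$, $Z_0\in H^4$ are required), contributing bounds of the form $C\delta(1+t)^{\rho}$ with $\rho$ computed as in \eqref{3.5}, \eqref{3.11}; the correction-function pieces are $O(|u_+|e^{-ct^\vartheta})$ by Proposition \ref{Prop a2}; the genuinely nonlinear terms, e.g. $(p'(\omega_x+\bar v+\hat v)-p'(\bar v))\omega_{xx}^2$ and the $\bar v_x,\hat v_x$ cross terms, are absorbed by the smallness of $\epsilon$ and $\delta$ via \eqref{xian}, just as in \eqref{3.7}, \eqref{3.13}, \eqref{a3.45}. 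Then multiply the second estimate by a large constant $h=\tfrac{6}{\alpha}$ and add it to the first; for $t\ge T_2$ large the cross terms $\int(1+t)^{\beta}\omega_{xt}^2$ and $\int(1+t)^{\beta+\lambda-1}\omega_{xx}^2$ are absorbed (using $0\le\lambda<1$) into $\tfrac{h\alpha}{2}\int(1+t)^{\beta}\omega_{xt}^2$ and $\tfrac{C_0}{2}\int(1+t)^{\beta}\omega_{xx}^2$, giving $\frac{\rm d}{{\rm d}t}\mathcal{H}_2(t)+c\int_0^\infty(1+t)^{\beta}(\omega_{xx}^2+\omega_{xt}^2)dx\le C(1+t)^{\lambda-2}\mathcal{H}_2(t)+C\delta(1+t)^{\rho}+(\text{remainders controlled by Lemmas \ref{Lemma 1}--\ref{Lemma 2}})$, where $\mathcal{H}_2(t)\sim(1+t)^{\beta+\lambda}(\|\omega_{xx}\|^2+\|\omega_{xt}\|^2)+(1+t)^{\beta}\|\omega_x\|^2$. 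Applying Gronwall's inequality on $[T_2,t]$ and Proposition \ref{prop 3} on $[0,T_2]$, then boosting the weight by multiplying the combined inequality by a further fixed power of $(1+t)$ and reusing Lemmas \ref{Lemma 1}--\ref{Lemma 2} (as in the last lines of the proofs of Lemmas \ref{Lemma a2} and \ref{Lemma 2}), the choice $\beta=\lambda$ gives the $(1+t)^{2\lambda+2}$ estimate for $0\le\lambda<\tfrac35$, and the choice $\tfrac32-\tfrac{3\lambda}{2}<\beta<\lambda$, $\kappa=\lambda-\beta+1$, $\nu=\tfrac{\alpha(\lambda-\beta)}{4}$ (as in \eqref{3.6}) gives the $(1+t)^{\frac72-\frac{\lambda}{2}}$ decay and the $(1+t)^{\beta+\frac{3\lambda}{2}-\frac32}$ growth of the dissipation integral for $\tfrac35<\lambda<1$.

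The main obstacle I anticipate is not the energy structure, which is entirely parallel to Lemmas \ref{Lemma a2}, \ref{Lemma 1} and \ref{Lemma 2}, but the exponent bookkeeping in the case $\tfrac35<\lambda<1$: one must fix $\beta$ in the narrow window $(\tfrac32-\tfrac{3\lambda}{2},\lambda)$, together with $\kappa,\nu$ and the large constant $h$, so that simultaneously the leftover damping term of type $\int(1+t)^{\beta-\lambda-1}\omega^2$ keeps a good sign, the worst source exponent stays above $-1$ (again equivalent to $\beta+\tfrac{3\lambda}{2}>\tfrac32$), and the lower-order integrals borrowed from Lemmas \ref{Lemma 1}--\ref{Lemma 2} genuinely close after the final $(1+t)$-weight is multiplied in. The one feature absent from the Cauchy problem of \cite{Cui-Yin-Zhang-Zhu2016} is the vanishing of the $x=0$ boundary contributions; this reduces to $\omega_{xx}(0,t)=\omega_{txx}(0,t)=0$, which follow from the compatibility of \eqref{1.20} with \eqref{bound1} (using $\bar v_x(0,t)=0$ and $p'\ne 0$ at $x=0$), so it is a verification rather than a new difficulty. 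Everything else is a routine, if lengthy, repetition of the computations already carried out above.
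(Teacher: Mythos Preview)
Your proposal is correct and follows essentially the same route the paper intends: the authors give no proof for this lemma and simply say ``Similarly, we can derive decay rates on the higher derivatives,'' meaning exactly the scheme you describe --- raise the argument of Lemmas \ref{Lemma 1}--\ref{Lemma 2} by one $x$-derivative (multiply \eqref{1.20} by $-(1+t)^{\beta}\omega_{xx}$ and the $x$-differentiated equation by $(1+t)^{\beta+\lambda}\omega_{xt}$), combine with a large constant $h$, absorb via $0\le\lambda<1$, apply Gronwall, and boost the weight as in Lemma \ref{Lemma a2}; your remarks on the boundary terms ($\omega_{xx}(0,t)=\omega_{txx}(0,t)=0$, $\bar v_x(0,t)=0$) and on the $\beta,\kappa,\nu$ bookkeeping in the range $\tfrac35<\lambda<1$ are exactly the points that need to be checked. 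Aside from a harmless exponent slip (the energy functional carries $(1+t)^{\beta-\lambda}\|\omega_x\|^2$, not $(1+t)^{\beta}\|\omega_x\|^2$), there is nothing to add.
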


\begin{lemma}\label{Lemma 4}
Under the assumptions of Theorems \ref{Thm 1}-\ref{Thm 1-1},  if $\epsilon, \delta$ are small, it holds that
\begin{equation}\notag
\begin{split}
      &
      (1+t)^{3\lambda+3}(\|\omega_{xxx}\|^2+\|\omega_{xxt}\|^2)
       +
       \int_0^t\big((1+s)^{3\lambda+2}\|\omega_{xxx}\|^2+(1+s)^{2\lambda+3}\|\omega_{xxt}\|^2\big)ds\\
\leq
 &
   C( \|\omega_0\|_{H^3}^2+\|z_0 \|_{H^2}^2+\delta),   \qquad \mbox{for any} \qquad 0\leq\lambda<\frac{3}{5},
\end{split}
\end{equation}
and for $\frac{3}{5}<\lambda<1$
\begin{equation}\notag
\left\{
\begin{split}
&
       (1+t)^{\frac{9}{2}+\frac{\lambda}{2}}(\|\omega_{xxx}\|^2+\|\omega_{xxt}\|^2)
       \leq
   C( \|\omega_0\|_{H^3}^2+\|z_0 \|_{H^2}^2+\delta),\\
&
       \int_0^t\big(
       (1+s)^{\beta+2\lambda+2}\|\omega_{xxx}\|^2
       +
       (1+s)^{\beta+\lambda+3}\|\omega_{xxt}\|^2
       \big)ds\\
       &
\qquad \leq
   C(1+t)^{\beta+\frac{3\lambda}{2}-\frac{3}{2}}( \|\omega_0\|_{H^3}^2+\|z_0 \|_{H^2}^2+\delta),\quad  \mbox{for any }\quad  \frac{3}{2}-\frac{3\lambda}{2}<\beta<\lambda.
\end{split} \right.
\end{equation}
\end{lemma}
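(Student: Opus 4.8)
The plan is to repeat the scheme of Lemmas~\ref{Lemma 1}--\ref{Lemma 3} (and of Lemma~\ref{Lemma a3} for the auxiliary system) one derivative higher, so I only indicate the organization. First I would differentiate~\eqref{1.20} twice in $x$, getting
\begin{equation}\notag
\omega_{ttxx}+\big(p'(\bar v)\omega_x\big)_{xxx}+\frac{\alpha}{(1+t)^\lambda}\omega_{xxt}=F_{xx},
\end{equation}
and record from~\eqref{bou} that $\omega_{xx}(0,t)=\omega_{xxt}(0,t)=0$; this is exactly what makes every boundary term produced below vanish.

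For $0\le\lambda<\frac35$ I would then carry out two weighted energy estimates on this equation. Multiplying it by $(1+t)^{3\lambda+2}\omega_{xx}$ and integrating over $\mathbb{R}^+$ produces, after integrations by parts and after turning the $\omega_{ttxx}$-commutator into a further total time derivative plus a lower-order term as in~\eqref{3.3}, the dissipation $-\int_0^\infty(1+t)^{3\lambda+2}p'(\bar v)\omega_{xxx}^2\,dx$, an energy functional built from $(1+t)^{3\lambda+2}\omega_{xx}\omega_{xxt}$ and $(1+t)^{2\lambda+2}\omega_{xx}^2$, the bad term $-\int_0^\infty(1+t)^{3\lambda+2}\omega_{xxt}^2\,dx$, and remainders of order $(1+t)^{3\lambda}\|\omega_{xx}\|^2$ that are integrable by Lemma~\ref{Lemma 3} (since $\lambda<1$). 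Multiplying instead by $(1+t)^{3\lambda+3}\omega_{xxt}$ and integrating produces, as in~\eqref{3.9}--\eqref{3.14}, the time derivative of $\int_0^\infty(1+t)^{3\lambda+3}\big(\omega_{xxt}^2-p'(\bar v)\omega_{xxx}^2\big)\,dx$ and the dissipation $\alpha\int_0^\infty(1+t)^{2\lambda+3}\omega_{xxt}^2\,dx$; here the only genuinely new forcing term, $-\int_0^\infty(1+t)^{3\lambda+3}\big(p(\omega_x+\bar v+\hat v)-p(\bar v)-p'(\bar v)\omega_x\big)_{xxx}\omega_{xxt}\,dx$, is handled exactly as in~\eqref{a3.37} (the boundary term at $x=0$ vanishing because $\omega_{xxt}(0,t)=0$), yielding the further total time derivative $\tfrac12\frac{\rm d}{{\rm d}t}\int_0^\infty(1+t)^{3\lambda+3}\big(p'(\omega_x+\bar v+\hat v)-p'(\bar v)\big)\omega_{xxx}^2\,dx$ to be carried along, plus cubic and cross terms absorbed by the smallness of $\epsilon,\delta$ and by Lemmas~\ref{Lemma 1}--\ref{Lemma 3}. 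The remaining, linear in $\bar v$, pieces of $F_{xx}$ are bounded by Cauchy--Schwarz (after one more integration by parts against the $\omega_{xx}$-multiplier, again using $\omega_{xx}(0,t)=0$) and the decay rates of Proposition~\ref{Prop a1}, and give contributions of the form $C\delta^2\int_0^t(1+s)^{\frac{5\lambda-5}{2}}\,ds$.

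Adding a large multiple $h$ of the second estimate to the first, the bad terms $-\int(1+t)^{3\lambda+2}\omega_{xxt}^2$ are absorbed by $h\alpha\int(1+t)^{2\lambda+3}\omega_{xxt}^2$ (using $3\lambda+2\le2\lambda+3$, i.e.\ $\lambda\le1$, and choosing $h$ then $T_\ast$ large), the cross term $(1+t)^{3\lambda+2}\omega_{xx}\omega_{xxt}$ is dominated inside the energy functional via Young's inequality (its $\omega_{xx}^2$-byproduct, of order $(1+t)^{3\lambda+1}$, is $\le(1+t)^{2\lambda+2}$ for $\lambda\le1$), and the total time derivative carrying $(p'(\cdots)-p'(\bar v))\omega_{xxx}^2$ is a small perturbation of the leading $-p'(\bar v)(1+t)^{3\lambda+3}\omega_{xxx}^2$ by~\eqref{xian} and $\|\hat v\|_{L^\infty}\le C|u_+|e^{-ct^\vartheta}$ from Proposition~\ref{Prop a2}. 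Applying Gronwall's inequality on $[T_\ast,\infty)$ together with Proposition~\ref{prop 3} on $[0,T_\ast]$ then closes the estimate: for $0\le\lambda<\frac35$ every source exponent is strictly below $-1$ — the borderline being $\frac{5\lambda-5}{2}<-1$, which comes from the piece $\frac{\lambda}{\alpha}(1+t)^{\lambda-1}p(\bar v)_x$ of $F$ after the $x$-differentiations — so the right-hand side stays bounded, which is the first assertion.

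For $\frac35<\lambda<1$ I would run the same computation with $\beta\in\big(\tfrac32-\tfrac{3\lambda}{2},\lambda\big)$ and the shifted weights $(1+t)^{\beta+2\lambda+2}$ and $(1+t)^{\beta+2\lambda+3}$ in place of $(1+t)^{3\lambda+2}$ and $(1+t)^{3\lambda+3}$; now the source exponents exceed $-1$, so Gronwall leaves the growing factor $(1+t)^{\beta+\frac{3\lambda}{2}-\frac32}$ on the right, exactly as in~\eqref{3.182} and Lemma~\ref{Lemma 2}, which is the second assertion. I expect the only real difficulty to be the bookkeeping of the top-order nonlinear term $\big(p(\omega_x+\bar v+\hat v)-p(\bar v)-p'(\bar v)\omega_x\big)_{xxx}$: after the integrations by parts it must be decomposed into the total time derivative above, genuinely absorbable cubic terms, and numerous cross terms pairing $\partial_x^k\partial_t^l\bar v$ ($k\le3$, $l\le1$) with derivatives of $\omega$, for each of which the decay of Proposition~\ref{Prop a1} and~\eqref{xian} have to be matched against the time weights — routine but lengthy, and entirely parallel to Lemmas~\ref{Lemma a2}--\ref{Lemma a3} and to Section~3 of~\cite{Cui-Yin-Zhang-Zhu2016}.
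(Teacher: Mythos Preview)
Your proposal is correct and matches the paper's intended approach: the paper omits the proof of this lemma entirely (``Similarly, we can derive decay rates on the higher derivatives\ldots''), and what you describe is precisely the natural continuation of Lemmas~\ref{Lemma 1}--\ref{Lemma 3} one derivative higher, exploiting the boundary identities $\omega_{xx}(0,t)=\omega_{xxt}(0,t)=0$ from~\eqref{bou}. Your identification of the critical source exponent $\tfrac{5\lambda-5}{2}$ and its origin in the $\tfrac{\lambda}{\alpha}(1+t)^{\lambda-1}p(\bar v)_x$ piece of $F$ is accurate, and your treatment of the top-order nonlinear term via the total-time-derivative trick of~\eqref{a3.37} is the right mechanism.
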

With Lemmas \ref{Lemma 1}-\ref{Lemma 4} in hand, by the Sobolev inequality and $0\leq \lambda<1$, it's easy to  know that
\begin{equation}\notag
\begin{split}
       \| \omega_x(\cdot,t)\|_{L^\infty}
       &\leq
   C(1+t)^{-\frac{3}{4}}( \|\omega_0\|_{H^3}+\|z_0 \|_{H^2}+\delta^\frac{1}{2})
\leq \frac{\epsilon}{2},
   \end{split}
\end{equation}
  \begin{equation}\notag
  \begin{split}
       \| \omega_{xt}(\cdot,t)\|_{L^\infty}&\leq
   C(1+t)^{-\frac{7}{4}}( \|\omega_0\|_{H^3}+\|z_0 \|_{H^2}+\delta^\frac{1}{2})
 \leq \frac{\epsilon}{2}(1+t)^{-1},
   \end{split}
\end{equation}
and
  \begin{equation}\notag
  \begin{split}
       \| \omega_{xx}(\cdot,t)\|_{L^\infty}&\leq
   C(1+t)^{-\frac{5}{4}}( \|\omega_0\|_{H^3}+\|z_0 \|_{H^2}+\delta^\frac{1}{2})
 \leq \frac{\epsilon}{2}(1+t)^{-\frac{\lambda+1}{2}}
   \end{split}
\end{equation}
provided $\|\omega_0\|_{H^3}+\|z_0 \|_{H^2}+\delta\ll 1$. Up to now, we thus close the {\it a priori} assumption \eqref{xian}  about $(\omega_x, \omega_{xt}, \omega_{xx})$ from  Lemmas \ref{Lemma 1}-\ref{Lemma 4}.

In next two lemmas we want to pay an attention to $z=\omega_t$ which has the improved decay rates.
\begin{lemma}\label{Lemma 5}
Under the assumptions of Theorems \ref{Thm 1}-\ref{Thm 1-1}, if $\epsilon, \delta$ are small, it holds that
\begin{equation}\notag
\begin{split}
      &
       (1+t)^{2}\|z\|^2+(1+t)^{\lambda+3}(\|z_x\|^2+\|z_t\|^2)
       +
       \int_0^t((1+s)^{\lambda+2}\|z_x\|^2+(1+s)^{3}\|z_t\|^2)ds\\
\leq
 &
   C( \|\omega_0\|_{H^2}^2+\|z_0 \|_{H^1}^2+\delta),  \qquad \mbox{for any} \qquad 0\leq\lambda<\frac{3}{5},
\end{split}
\end{equation}
and for $\frac{3}{5}<\lambda<1$
\begin{equation}\notag
\left\{
\begin{split}
&
        (1+t)^{\frac{7}{2}-\frac{5\lambda}{2}}\|z\|^2
        +
        (1+t)^{\frac{9}{2}-\frac{3\lambda}{2}}(\|z_{x}\|^2+\|z_{t}\|^2)
       \leq
   C( \|V_0\|_{H^2}^2+\|z_0 \|_{H^1}^2+\delta),\\
&
       \int_0^t\big(
       (1+s)^{\beta+2}\|z_{x}\|^2
       +
       (1+s)^{\beta-\lambda+3}\|z_{t}\|^2
       \big)ds\\
       &
\qquad \leq
   C(1+t)^{\beta+\frac{3\lambda}{2}-\frac{3}{2}}( \|\omega_0\|_{H^2}^2+\|z_0 \|_{H^1}^2+\delta),\quad  \mbox{for any }\quad  \frac{3}{2}-\frac{3\lambda}{2}<\beta<\lambda.
\end{split} \right.
\end{equation}
\end{lemma}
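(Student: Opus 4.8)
The plan is to mimic the proof of Lemma \ref{Lemma 1}, carried out for $z=\omega_t$ in place of $\omega$ and with the time weights raised by two powers. First I would differentiate \eqref{1.20} in $t$: since $z=\omega_t$, this yields the damped wave equation
\begin{equation}\notag
z_{tt}+(p'(\bar v)z_x)_x+\frac{\alpha}{(1+t)^\lambda}z_t
=\frac{\alpha\lambda}{(1+t)^{\lambda+1}}z-\big(p''(\bar v)\bar v_t\omega_x\big)_x+F_t ,
\end{equation}
supplemented by the homogeneous data $z(0,t)=z_t(0,t)=z_{xx}(0,t)=\cdots=0$ and the vanishing at $x=\infty$ coming from \eqref{bou}, with $z|_{t=0}=z_0$ and $z_t|_{t=0}$ read off from \eqref{1.20} at $t=0$. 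The structural points are: the term $\frac{\alpha\lambda}{(1+t)^{\lambda+1}}z$ is of lower order and will be handled not by sign but by integrating in $t$ and using $\int_0^t(1+s)\|\omega_t\|^2ds\le C$ from Lemma \ref{Lemma 2} (recall $z=\omega_t$); the term $(p''(\bar v)\bar v_t\omega_x)_x$ carries the fast-decaying factor $\bar v_t$; and $F_t=\partial_tF$ (from \eqref{1.21}) involves one more time derivative of $p(\bar v)$ than $F$, hence decays two powers faster by Proposition \ref{Prop a1}, while its quasilinear part — the piece proportional to $z_x$ with coefficient $O(\|\omega_x\|_{L^\infty})$ coming from $\partial_t\big(p(\omega_x+\bar v+\hat v)-p(\bar v)-p'(\bar v)\omega_x\big)_x$ — is of size $O(\epsilon)$ by \eqref{xian} and is absorbed into the dissipation after the same time-derivative rearrangement as in \eqref{3.13}, its $O(\epsilon)$ smallness making it harmless. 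The remaining pieces of $F_t$ involve $\hat v_t$ (Proposition \ref{Prop a2}) and are controlled through Lemmas \ref{Lemma 1}--\ref{Lemma 4}.

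Then I would run the two-multiplier scheme. For $0\le\lambda<\tfrac35$, multiply the $z$-equation by $(1+t)^{\lambda+2}z$, integrate over $\mathbb R^+$ (all boundary terms vanish by \eqref{bou}); multiply by $(1+t)^{2\lambda+2}z_t$, integrate; add the second identity times a large constant $h=\tfrac6\alpha$ to the first, so the troublesome $\int(1+t)^{\lambda+2}z_t^2$ term acquires a good sign. Estimating every forcing term with Propositions \ref{Prop a1}--\ref{Prop a2}, Lemmas \ref{Lemma 1}--\ref{Lemma 4} and \eqref{xian}, the dominant forcing $(1+t)^\lambda p(\bar v)_{xtt}$ contributes (after one integration by parts in $x$) a term $\lesssim\delta(1+t)^{\frac{5\lambda}{2}-\frac52}$, which is integrable in $t$ precisely when $\lambda<\tfrac35$ — this is what fixes the cut-off. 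One thus obtains, for $t\ge T_0$, a differential inequality of the schematic form $\frac{d}{dt}\mathcal H_z(t)+c\int_0^\infty(1+t)^{\lambda+2}(z_x^2+z_t^2)\,dx\le C(1+t)\|z\|^2+C(1+t)^{-\kappa}\mathcal H_z(t)+C\delta(1+t)^{-1-\eta}$ for some $\eta>0$ and some $1<\kappa<\tfrac52-\tfrac{5\lambda}{2}$, with $\mathcal H_z(t)\sim(1+t)^2\|z\|^2+(1+t)^{2\lambda+2}(\|z_x\|^2+\|z_t\|^2)$. Integrating this in $t$, using $\int_0^t(1+s)\|z\|^2ds\le C$ (Lemma \ref{Lemma 2}) to kill the first right-hand term, and using the local estimates of Proposition \ref{prop 3} on $[0,T_0]$, yields $\mathcal H_z(t)+\int_0^t(1+s)^{\lambda+2}(\|z_x\|^2+\|z_t\|^2)\,ds\le C$.

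To get the sharp rates I would, as in the passage from \eqref{3.14} to Lemma \ref{Lemma 2}, multiply the $z_t$-identity by an extra $(1+t)^{1-\lambda}$ and integrate once more: this upgrades $\|z_x\|^2+\|z_t\|^2$ from $(1+t)^{-2\lambda-2}$ to $(1+t)^{-\lambda-3}$, produces $\int_0^t(1+s)^{3}\|z_t\|^2ds\le C$, and preserves $(1+t)^2\|z\|^2\le C$, which is the assertion for $0\le\lambda<\tfrac35$. For $\tfrac35<\lambda<1$ the same machinery is run with a floating exponent: multiply by $(1+t)^{\beta+2}z$ and $(1+t)^{\beta+\lambda+2}z_t$ with $\tfrac32-\tfrac{3\lambda}{2}<\beta<\lambda$, put $\kappa=\lambda-\beta+1$ and $\nu=\tfrac{\alpha(\lambda-\beta)}{4}$ so that the $\int(1+t)^{\beta-\lambda+1}z^2$ term on the left can absorb part of the right-hand side, apply Gronwall on $[T_1,t]$, use Proposition \ref{prop 3} on $[0,T_1]$, and then integrate once more in $t$ to obtain the weighted integral bounds with the stated $(1+t)^{\beta+\frac{3\lambda}{2}-\frac32}$ growth; optimizing over $\beta$ and combining with these integral bounds gives the pointwise rates $(1+t)^{\frac72-\frac{5\lambda}{2}}\|z\|^2\le C$ and $(1+t)^{\frac92-\frac{3\lambda}{2}}(\|z_x\|^2+\|z_t\|^2)\le C$.

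The main obstacle is the time-weight bookkeeping: one must check that every term spawned by $F_t$ and by $(p''(\bar v)\bar v_t\omega_x)_x$ carries a time power at least $(1+t)^{-1-\eta}$ after being assigned its weight, that the non-absorbable $(1+t)z^2$ contribution (coming both from the $\frac{\alpha\lambda}{(1+t)^{\lambda+1}}z$ source and from the sign of the damping term at the weight $\lambda+2>\lambda$) is caught by Lemma \ref{Lemma 2}'s bound $\int_0^t(1+s)\|\omega_t\|^2ds\le C$, and that the higher time derivatives $\bar v_t,\bar v_{tt}$ entering $F_t$ decay fast enough — which is exactly why the hypotheses demand $(V_0,Z_0)\in H^5\times H^4$, so that Proposition \ref{Prop a1}, not merely Theorem \ref{Thm a1}, is available. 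The delicate quantitative point is that the two powers of extra decay enjoyed by $F_t$ over $F$ are exactly matched by the two extra powers of time weight placed on $z$, so the integrability threshold — and hence the cut-off $\lambda=\tfrac35$ — is inherited unchanged from Lemma \ref{Lemma 1}.
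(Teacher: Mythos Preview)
Your overall strategy---differentiate \eqref{1.20} in $t$ and run weighted energy estimates on the resulting equation for $z=\omega_t$---is correct and is what the paper does (the paper omits the proof of this lemma, but the template is the analogous Lemma~\ref{Lemma a5} for $Z$). There is, however, a slip in your Case~2 mechanism and an unnecessary detour through Gronwall.

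The slip: with the multiplier $(1+t)^{\beta+2}z$, the damping term produces
\[
\frac{\alpha}{(1+t)^{\lambda}}z_t\cdot(1+t)^{\beta+2}z
=\frac{\alpha}{2}\frac{d}{dt}\!\int(1+t)^{\beta+2-\lambda}z^2
-\frac{\alpha(\beta+2-\lambda)}{2}\!\int(1+t)^{\beta+1-\lambda}z^2,
\]
and since $\beta+2>\lambda$ for every admissible $\beta$, the last integral has the \emph{wrong} sign: it lands on the right, not the left. So there is no ``$\int(1+t)^{\beta-\lambda+1}z^2$ term on the left'' to absorb anything, and the choice $\nu=\tfrac{\alpha(\lambda-\beta)}{4}$ plays no role here. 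The same remark applies to the extra source $\frac{\alpha\lambda}{(1+t)^{\lambda+1}}z$, which contributes another positive $\int(1+t)^{\beta+1-\lambda}z^2$ on the right. This does not wreck the argument, because exactly this quantity is already controlled: Lemma~\ref{Lemma 2} gives $\int_0^t(1+s)^{\beta-\lambda+1}\|\omega_t\|^2\,ds\le C(1+t)^{\beta+\frac{3\lambda}{2}-\frac32}(\cdots)$ (and $\int_0^t(1+s)\|\omega_t\|^2\,ds\le C$ in Case~1). That is the actual mechanism---not a sign miracle, not Gronwall.

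This is also where your route and the paper's diverge in style. The paper (cf.\ the proof of Lemma~\ref{Lemma a5}) does not add the two multiplier identities and invoke Gronwall \`a la Lemma~\ref{Lemma 1}; it treats them sequentially. First multiply the $z$-equation by $(1+t)^{\lambda}z$, obtain a clean inequality with $C\!\int(1+t)^{-1}z^2$ on the right, multiply that inequality by the extra power $(1+t)^{2}$ (resp.\ $(1+t)^{\beta-\lambda+2}$), integrate in $t$, and kill the resulting $(1+t)z^2$ (resp.\ $(1+t)^{\beta-\lambda+1}z^2$) using Lemma~\ref{Lemma 2}. Then multiply by $(1+t)^{2\lambda}z_t$, get the second inequality, multiply by $(1+t)^{3-\lambda}$ (resp.\ $(1+t)^{\beta+3-2\lambda}$), and integrate, feeding in the just-obtained bound on $\int(1+s)^{\lambda+2}\|z_x\|^2$. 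No Gronwall is needed at any stage, because unlike $\omega$ in Lemma~\ref{Lemma 1}, the quantity $z=\omega_t$ already has time-integrated control. Your bookkeeping on $F_t$ (two extra powers of decay, cut-off inherited at $\lambda=\tfrac35$) is correct.
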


Similar to the proof of Lemma \ref{Lemma 5}, we can get the
following estimates.
\begin{lemma}\label{Lemma 6}
Under the assumptions of Theorems \ref{Thm 1}-\ref{Thm 1-1},  if $\epsilon, \delta$ are small, it holds that
\begin{equation}\notag
\begin{split}
      &
      (1+t)^{2\lambda+4}(\|z_{xx}\|^2+\|z_{xt}\|^2)
       +
       \int_0^t\big((1+s)^{2\lambda+3}\|z_{xx}\|^2+(1+s)^{\lambda+4}\|z_{xt}\|^2\big)ds\\
\leq
 &
   C( \|\omega_0\|_{H^3}^2+\|z_0 \|_{H^2}^2+\delta),  \qquad \mbox{for any} \qquad 0\leq\lambda<\frac{3}{5},
\end{split}
\end{equation}
and for $\frac{3}{5}<\lambda<1$
\begin{equation}\notag
\left\{
\begin{split}
&
        (1+t)^{\frac{11}{2}-\frac{\lambda}{2}}(\|z_{xx}\|^2+\|z_{xt}\|^2)
       \leq
   C( \|\omega_0\|_{H^3}^2+\|z_0 \|_{H^2}^2+\delta),\\
&
       \int_0^t\big(
       (1+s)^{\beta+\lambda+3}\|z_{xx}\|^2
       +
       (1+s)^{\beta+4}\|z_{xt}\|^2
\big)ds\\
       &
\qquad
      \leq
   C(1+t)^{\beta+\frac{3\lambda}{2}-\frac{3}{2}}( \|\omega_0\|_{H^3}^2+\|z_0 \|_{H^2}^2+\delta),\quad  \mbox{for any }\quad  \frac{3}{2}-\frac{3\lambda}{2}<\beta<\lambda.
\end{split} \right.
\end{equation}
\end{lemma}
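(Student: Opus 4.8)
The plan is to mirror the two-multiplier weighted energy argument used for Lemma \ref{Lemma a6} and for Lemma \ref{Lemma 5}, now carried out one order of differentiation higher. Since $z=\omega_t$, the quantities to be controlled are $z_{xx}=\omega_{xxt}$ and $z_{xt}=\omega_{ttx}$, and the gain over Lemma \ref{Lemma 4} comes from exploiting that $z$ itself already decays faster (Lemma \ref{Lemma 5}). First I would derive the evolution equation for $z_x$: differentiating \eqref{1.20} once in $x$ gives
\[
z_{xt}+\big(p'(\bar v)\omega_x\big)_{xx}+\frac{\alpha}{(1+t)^\lambda}z_x=F_x,
\]
and differentiating once more in $t$, after rewriting $\big(p'(\bar v)\omega_x\big)_{xt}=\big(p'(\bar v)z_x\big)_x+\big(p''(\bar v)\bar v_t\omega_x\big)_x$, produces a damped wave equation for $z_x$,
\[
z_{xtt}+\big(p'(\bar v)z_x\big)_{xx}+\frac{\alpha}{(1+t)^\lambda}z_{xt}=\mathcal G,
\]
where $\mathcal G$ gathers $F_{xt}$, the term $\frac{\alpha\lambda}{(1+t)^{\lambda+1}}z_x$, and all commutator terms containing derivatives of $\bar v$ and of $\hat v$. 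Each summand of $\mathcal G$ carries a definite algebraic (or, for the $\hat v$-pieces, exponential) decay rate supplied by Propositions \ref{Prop a1}--\ref{Prop a2}, the a priori assumption \eqref{xian}, and the already-established rates of Lemmas \ref{Lemma 1}--\ref{Lemma 5}; this is precisely where the hypothesis $V_0\in H^5(\mathbb{R})$, $Z_0\in H^4(\mathbb{R})$ enters, since $\mathcal G$ involves up to fifth-order spatial derivatives of $\bar v$.

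Next I would run the standard weighted estimate with time weight $(1+t)^\beta$. Multiplying the wave equation for $z_x$ by $(1+t)^{\beta+\lambda}z_{xt}$ and integrating over $\mathbb{R}^+$, two integrations by parts together with the boundary relations in \eqref{bou} (which also force $z_{xx}(0,t)=z_{xxt}(0,t)=0$, so all boundary contributions vanish) give an inequality of the form
\[
\frac{\rm d}{{\rm d}t}\Big\{(1+t)^{\beta+\lambda}\big(\|z_{xt}\|^2+\|z_{xx}\|^2\big)+\cdots\Big\}+c\,(1+t)^\beta\|z_{xt}\|^2\le(\text{lower-order and source terms}),
\]
where I used $-p'(\bar v)\ge C_0>0$. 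Complementarily, multiplying the $x$-differentiated equation by a weighted multiple of $-z_{xxx}$ (respectively $z_{xt}$, exactly as the two multipliers in Lemma \ref{Lemma a6}) recovers the dissipation $\int_0^t(1+s)^\beta\|z_{xx}\|^2\,ds$. Adding the two inequalities with a suitable large constant $h$ and absorbing the $(1+t)^{\beta+\lambda-1}$- and $(1+t)^{-\kappa}$-type errors by the smallness of $\epsilon,\delta$ and by Lemmas \ref{Lemma 1}--\ref{Lemma 5}, one arrives at a single differential inequality for an energy $\mathcal H_{xx}(t)\sim(1+t)^{\beta-\lambda}\|z\|^2+(1+t)^{\beta+\lambda}\big(\|z_{xx}\|^2+\|z_{xt}\|^2\big)+\cdots$.

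Finally I would split into the two regimes, as in Lemma \ref{Lemma 1}. For $0\le\lambda<\frac35$ the forcing decays faster than $(1+t)^{-1}$, so one may fix $\beta$ at its critical value, apply Gronwall's inequality on $[T_0,t]$, and close with the local estimates of Proposition \ref{prop 3}, obtaining $(1+t)^{2\lambda+4}(\|z_{xx}\|^2+\|z_{xt}\|^2)\le C(\|\omega_0\|_{H^3}^2+\|z_0\|_{H^2}^2+\delta)$ and the companion time-integral bounds. For $\frac35<\lambda<1$ the forcing decays only like $(1+t)^{\beta+\frac{3\lambda}{2}-\frac52}$, so I would keep $\beta\in(\frac32-\frac{3\lambda}{2},\lambda)$ free; Gronwall on $[T_1,t]$ then yields the pointwise-in-time rate $(1+t)^{\frac{11}{2}-\frac\lambda2}$ and the dissipation integrals with the loss factor $(1+t)^{\beta+\frac{3\lambda}{2}-\frac32}$. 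The main obstacle is the bookkeeping in the first two steps: after applying $\partial_x\partial_t$ the inhomogeneity $\mathcal G$ and the term $F_{xt}$ expand into many products of high derivatives of $\bar v$, $\hat v$ and $\omega_x$, and one must check term by term that each is square-integrable against the chosen weight, that the repeated integrations by parts on the half line leave no surviving boundary term, and that each error is attributed correctly --- to $\epsilon$, to $\delta$, or to the induction on Lemmas \ref{Lemma 1}--\ref{Lemma 5}.
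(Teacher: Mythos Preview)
Your proposal is correct and follows essentially the same route the paper indicates. The paper does not spell out the argument for this lemma; it only says ``similar to the proof of Lemma~\ref{Lemma 5}'', which in turn mirrors Lemmas~\ref{Lemma a5}--\ref{Lemma a6}. Your plan---differentiate \eqref{1.20} in $x$ and $t$ to obtain the damped wave equation for $z_x$, run the two weighted-multiplier estimate, split into the ranges $0\le\lambda<\tfrac35$ and $\tfrac35<\lambda<1$, and close with Gronwall and Lemmas~\ref{Lemma 1}--\ref{Lemma 5}---is exactly this scheme.

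One small point of alignment: where you propose the second multiplier to be a weighted $-z_{xxx}$, the paper (by analogy with Lemma~\ref{Lemma a6}) pairs the $tx$-differentiated equation with a weighted $z_x$ to extract the $\|z_{xx}\|^2$ dissipation. The two choices are equivalent after an integration by parts in $x$, using the boundary relations $z_x(0,t)=\omega_{xx}(0,t)=0$ from \eqref{bou}, so this is not a genuine discrepancy.
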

Recalling Lemmas \ref{Lemma 1}-\ref{Lemma 6}, we complete the proofs of Theorems \ref{Thm 1}-\ref{Thm 1-0}. The proof of Theorem  \ref{Thm 1-1} is similar.

\vspace{6mm}
}

\section{The case of Neumann boundary condition}\label{S4}
\numberwithin{equation}{section}

In this section, we consider the problem \eqref{1.3}-\eqref{1.4} with the  null-Neumann boundary condition \eqref{bound2}, i.e.
\begin{equation}\label{t3.1}
\left\{\begin{array}{l}
\partial_t  v
  -
   \partial_x u=0,\\[2mm]
\partial_t u
   +
      \partial_x  p(v)
     =\displaystyle
     -\frac{\alpha}{(1+t)^\lambda} u,\qquad (x,t)\in \mathbb{R}^+\times\mathbb{R}^+,\\
     u_x(0,t)=0,
 \end{array}
        \right.
\end{equation}
and the initial data is
\begin{equation}\label{t3.2}
  (v,u)\mid_{t=0}=(v_0,u_0)(x)\rightarrow (v_+,u_+),
   \quad
    \mbox{as}
     \quad
     x\rightarrow  +\infty
      \quad
        \mbox{and}
         \quad
           v_+>0.
\end{equation}
From $(\ref{t3.1})_1$ and the boundary condition $(\ref{t3.1})_3$, we have $v(0,t)=v_0(0)$ for any $t>0$.

\subsection{The case of $v_0(0)\neq v_+$}
\numberwithin{equation}{subsection}

To construct the   diffusion waves $(\bar v,\bar u)(x,t)$ to the corresponding boundary condition, from \cite{Cui-Yin-Zhang-Zhu2016,Hsiao-Liu1992,Nishihara-Yang1999}, it is known that for any  two constants $v_\pm>0$ there exists a unique self-similar solution $\tau(x,t)=\phi(\frac{x}{(1+t)^{\frac{\lambda+1}{2}}})$ satisfying
\begin{equation}\notag
\left\{\begin{array}{l}
 \displaystyle\frac{ \tau_t}{(1+t)^\lambda}
 -
  \frac{1}{\alpha} p(\tau)_{xx} =0, \qquad (x,t)\in \mathbb{R}\times\mathbb{R}^+, \\[2mm]
     \tau( \pm \infty,t)=v_\pm.
 \end{array}
        \right.
\end{equation}
Therefore, for any constant $v_0(0)>0$ between $v_-$ and $v_+$, there exists a unique $\bar v(x,t)$ in the form of $\phi(\frac{x}{(1+t)^{\frac{\lambda+1}{2}}})|_{t\geq 0}$ satisfying
\begin{equation}\notag
\left\{\begin{array}{l}
 \displaystyle\frac{ \bar v_t}{(1+t)^\lambda}
 -
  \frac{1}{\alpha}  p(\bar v)_{xx}=0, \qquad (x,t)\in \mathbb{R}^+\times\mathbb{R}^+,\\[2mm]
   \bar v (0,t)=v_0(0), \qquad \bar v (\infty,t)=v_+.
 \end{array}
        \right.
\end{equation}
We set
\begin{equation}\notag
     \bar{u}(x,t)=\displaystyle-\frac{(1+t)^\lambda}{\alpha}p(\bar v)_x,
\end{equation}
so that $\bar u_x(0,t)=\bar v_t(0,t)=x\phi'(\frac{x}{(1+t)^{\frac{\lambda+1}{2}}})
(-\frac{\lambda+1}{2})(1+t)^{-\frac{\lambda+3}{2}}\mid_{x=0}=0$.
 Thus, $(\bar v,\bar u)(x,t)$ called the nonlinear diffusion wave, satisfies
 \begin{equation}\label{t3.5}
\left\{\begin{array}{l}
  \bar{v}_t
  -
     \bar{ u}_x=0,\\[2mm]
      p(\bar v)_x
     =\displaystyle
     -\frac{\alpha}{(1+t)^\lambda} \bar{u},\qquad \qquad (x,t)\in \mathbb{R}^+\times\mathbb{R}^+,\\[2mm]
     (\bar v,\bar u_x)(0,t)=(v_0(0),0),\qquad (\bar v, \bar{u})(  \infty,t)=(v_+,0),\\[2mm]
      (\bar v,\bar u)\mid_{t=0}=(\bar v_0, \bar u_0)(x)\rightarrow (v_+,0),
   \quad
    \mbox{as}
     \quad
     x\rightarrow  +\infty.
 \end{array}
        \right.
\end{equation}
According to the idea in \cite{Nishihara-Yang1999}, similar to that in the Dirichlet boundary problem, the correction functions is defined by
\begin{equation}\label{t3.6}
\hat{v}(x,t)=-(u_0(0)-u_+) m_0(x)B(t),
\end{equation}
and
\begin{equation}\label{t3.7}
\hat{u}(x,t)=\bigg[u_++(u_0(0)-u_+)\int_{x}^\infty m_0(y)dy\bigg]\beta(t),
\end{equation}
where $\beta(t)$, $B(t)$ have been defined in Section 2.1, and  $m_0(x)$ is a smooth function with compact support such that
\begin{equation}\notag
\int_{0}^\infty m_0(x)dx=1, \qquad \mbox{supp}\  m_0\subset \mathbb{R}^+.
\end{equation}
Therefore, $(\hat v,\hat u)(x,t)$ satisfies
\begin{equation}\label{t3.8}
\left\{\begin{array}{l}
\partial_t  \hat{v}
  -
   \partial_x \hat{ u}=0,\\[2mm]
      \partial_t\hat{u}
     =\displaystyle
     -\frac{\alpha}{(1+t)^\lambda}\hat{u},
     \qquad   (x,t)\in \mathbb{R}^+\times\mathbb{R}^+,  \\[2mm]
     (\hat v, \hat u_x)(0,t)=(0,0), \qquad \hat u(0,t)=u_0(0)\beta(t),\\[2mm]
     (\hat v, \hat u)(\infty,t)=(0,u_+\beta(t)).
 \end{array}
        \right.
\end{equation}
Combining \eqref{t3.1}, \eqref{t3.5} and \eqref{t3.8}, it is easy to see that
\begin{equation}\label{t3.9}
\left\{\begin{array}{l}
\partial_t  (v-\bar{v}-\hat{v})
  -
   \partial_x (u-\bar{ u}-\hat{u})=0,
   \qquad (x,t)\in \mathbb{R}^+\times\mathbb{R}^+, \\[2mm]
      \partial_t (u-\bar{u}-\hat{u})+\partial_x  (p(v)-p(\bar{v}))
         + \partial_t \bar u
           +\displaystyle
              \frac{\alpha}{(1+t)^\lambda} (u-\bar{u}-\hat{u})=0,\\
          ( u-\bar{u}-\hat{u})_x(0,t)=0.
 \end{array}
        \right.
\end{equation}
Hence defining the perturbation by
\begin{equation}\label{t3.10}
\omega(x,t)=-\int_x^{\infty} (v(y,t)-\bar v(y,t)-\hat{v}(y,t))dy,
\end{equation}
\begin{equation}\label{t3.11}
z(x,t)= u(x,t)-\bar u(x,t)-\hat{u}(x,t).
\end{equation}
It follows from \eqref{t3.9} that the reformulated problem is
\begin{equation}\label{t3.12}
\left\{\begin{array}{l}
  \omega_t
  -
   z=0, \qquad  (x,t)\in \mathbb{R}^+\times\mathbb{R}^+ ,\\[2mm]
       z_t+ (p(\omega_x+\bar v+\hat v)-p(\bar{v}))_x
           +\displaystyle
              \frac{\alpha}{(1+t)^\lambda} z=-\bar u_t,\\[2mm]
              \omega_x(0,t)=0, \qquad z_x(0,t)=0,
 \end{array}
        \right.
\end{equation}
with initial data
\begin{equation}\label{t3.13}
  (\omega,z)\mid_{t=0}=(\omega_0,z_0)(x),
\end{equation}
where
\begin{equation}\notag
\left\{\begin{array}{l}
  \ds \omega_0(x)
    =-\int_x^{\infty}  (v_0(y)-\bar v(y,0)-\hat{v}(y,0))dy,\\
   \ds z_0(x)
       =u_0(x)-\bar u(x,0)-\hat{u}(x,0).
\end{array}
\right.
\end{equation}
Rewrite  \eqref{t3.12} and \eqref{t3.13} as
\begin{equation}\label{t3.15}
\left\{\begin{array}{l}
      \omega_{tt}+ (p'(\bar{v})\omega_x)_x
           +\displaystyle
              \frac{\alpha}{(1+t)^\lambda} \omega_t=F_2, \qquad (x,t)\in \mathbb{R}^+\times\mathbb{R}^+,\\
               \omega_x(0,t)=0,
 \end{array}
 \right.
\end{equation}
with initial data
\begin{equation}\label{t3.16}
  (\omega,\omega_t)\mid_{t=0}=(\omega_0,z_0)(x),
\end{equation}
where
\begin{equation}\label{t3.17}
\begin{split}
  F_2
   =
    \frac{1}{\alpha}(1+t)^\lambda p(\bar v)_{xt}
      +
       \frac{\lambda}{\alpha} { (1+t)^{\lambda-1} } { p(\bar v)_x}
          -
            (p(\omega_x+\bar v+\hat v)-p(\bar{v})-p'(\bar v)\omega_x)_x.
 \end{split}
\end{equation}

Note that, from the boundary condition, we have
\begin{equation}\notag
\omega_{x}(0,t)=\omega_{tx}(0,t)=\omega_{ttx}(0,t)= \big(p(\omega_x+\bar v+\hat v)-p(\bar v)\big) \mid_{x=0}
 =0,
 \qquad\quad \qquad \mbox{etc}.
\end{equation}

The nonlinear diffusion wave $\bar v(x,t)$ defined in \eqref{t3.5} has the same behavior as in \cite{Cui-Yin-Zhang-Zhu2016}.  It's worth noting that their convergence rates are different from Proposition \ref{Prop a1}. Therefore, our main result about Neumann boundary condition in this section  is different from  the case of Dirichlet boundary condition (see Theorems \ref{Thm 1}-\ref{Thm 1-1}).

%%%%%%%%%%%%%%%%%%%%%%%%%%%%%%%%%%%%%%%%%%%%%%%%%%%%%%%%%%%%%%%%%%%%%%%%%%%%%%%%%%%%%%%%%%%%%%
\begin{theorem}\label{Thm t2}{\bf(The case of $v_0(0)\neq v_+$ and $0\leq \lambda <\frac{1}{7}$)}
For any $\alpha>0$ and $v_0-v_+\in L^1(\mathbb{R}^+)$, assume that both $\delta_1=|v_+-v_0(0)|+|u_+-u_0(0)|$ and
 $ \|\omega_0\|_{H^3(\mathbb{R}^+)}+\|z_0\|_{H^2(\mathbb{R}^+)}$ are sufficiently small. Then,
there exists a unique time-global solution of the problem \eqref{t3.15}-\eqref{t3.16} satisfying
\begin{equation}\notag
  \omega\in C^{k}((0,\infty),H^{3-k}(\mathbb{R}^+)),\qquad k=0,1,2,3,
\end{equation}
\begin{equation}\notag
  \omega_t\in C^{k}((0,\infty),H^{2-k}(\mathbb{R}^+)),\qquad k=0,1,2,
\end{equation}
furthermore, we have
\begin{equation}\notag
\begin{split}
&
      \sum_{k=0}^3(1+t)^{(\lambda +1)k }\|\partial_x^k\omega(\cdot,t)\|_{L^2}^2
      +
         \sum_{k=0}^2(1+t)^{(\lambda+1)k +2} \|\partial_x^k\omega_t(\cdot,t)\|_{L^2}^2
          \\
         &
            +
                \int_0^t \bigg[ \sum_{j=1}^3(1+s)^{(\lambda +1)j-1}\|\partial_x^j\omega(\cdot,s)\|_{L^2}^2
                 +
                 \sum_{j=0}^2(1+s)^{(\lambda +1)j+1} \|\partial_x^j\omega_t(\cdot,s)\|_{L^2}^2\bigg]ds\\
                    \leq
                       &
                          C(\|\omega_0\|_{H^3}^2+\|z_0\|_{H^2}^2+\delta_1).
\end{split}
\end{equation}
\end{theorem}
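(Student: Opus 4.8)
The plan is to establish Theorem \ref{Thm t2} by the same three-tier scheme used for Theorems \ref{Thm 1}--\ref{Thm 1-1}, now exploiting that the diffusion wave $\bar v(x,t)$ in \eqref{t3.5} is the genuine self-similar profile $\phi\big(x/(1+t)^{(\lambda+1)/2}\big)$ and therefore enjoys the decay estimates of the Cauchy-problem wave in \cite{Cui-Yin-Zhang-Zhu2016}, e.g.\ $\|\partial_t^l\partial_x^{k+1}\bar v(\cdot,t)\|_{L^2(\mathbb{R}^+)}\le C\delta_1(1+t)^{-\frac{\lambda+1}{4}-\frac{(\lambda+1)k}{2}-l}$ together with the matching $L^\infty$ bounds, while the correction $(\hat v,\hat u)$ decays exponentially by Proposition \ref{Prop a2}. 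First I would record a local-in-time existence result for \eqref{t3.15}--\eqref{t3.16}, obtained exactly as in Proposition \ref{prop 3} (the Neumann data cause no new difficulty in the iteration). The bulk of the work is then the uniform-in-$T$ a priori estimate under the assumption
\[
N(T):=\sup_{0<t<T}\Big\{\|\omega_x(\cdot,t)\|_{L^\infty}+(1+t)\|\omega_{xt}(\cdot,t)\|_{L^\infty}+(1+t)^{\frac{\lambda+1}{2}}\|\omega_{xx}(\cdot,t)\|_{L^\infty}\Big\}\le\epsilon,
\]
after which the theorem follows by the standard continuation argument. Throughout, the boundary relations $\omega_x(0,t)=\omega_{tx}(0,t)=\omega_{ttx}(0,t)=\cdots=0$ and $\big(p(\omega_x+\bar v+\hat v)-p(\bar v)\big)\big|_{x=0}=0$ noted after \eqref{t3.17} make every boundary term produced by integration by parts vanish, so the interior computations are formally identical to those of Section \ref{S2.3}.

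For the a priori bounds I would run the weighted-energy hierarchy of Lemmas \ref{Lemma 1}--\ref{Lemma 6}. Multiplying \eqref{t3.15} by $(1+t)^{\beta}\omega$ and by $(1+t)^{\beta+\lambda}\omega_t$, integrating over $\mathbb{R}^+$, and forming the linear combination with a large fixed constant $h$ yields, after some time $T_0$, a differential inequality whose coercive part controls $(1+t)^{\beta}(\|\omega_x\|_{L^2}^2+\|\omega_t\|_{L^2}^2)$ and $(1+t)^{\beta-\lambda-1}\|\omega\|_{L^2}^2$; the source $F_2$ splits into the linear pieces $\propto(1+t)^{\beta+\lambda}p(\bar v)_{xt}$ and $\propto(1+t)^{\beta+\lambda-1}p(\bar v)_x$ and the quadratic remainder $p(\omega_x+\bar v+\hat v)-p(\bar v)-p'(\bar v)\omega_x$, which are absorbed by Cauchy--Schwarz (using the a priori bound above and the smallness of $\epsilon,\delta_1$) at the cost of explicit decaying forcing terms $C\delta_1(1+t)^{\gamma(\beta,\lambda)}$. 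Taking $\beta=\lambda$ with suitable auxiliary exponents $\nu>0$ and $\kappa>1$ and applying Gronwall's inequality on $[T_0,\infty)$ gives $\|\omega\|_{L^2}^2+(1+t)^{2\lambda}(\|\omega_x\|_{L^2}^2+\|\omega_t\|_{L^2}^2)+\int_0^t(1+s)^{\lambda}(\|\omega_x\|_{L^2}^2+\|\omega_t\|_{L^2}^2)\,ds\le C(\cdots)$, and a further multiplication by $(1+t)^{1-\lambda}$ upgrades the weight on $(\omega_x,\omega_t)$ to $(1+t)^{\lambda+1}$ as claimed. Differentiating \eqref{t3.15} once and twice in $x$ and repeating the same pairing with weights shifted by $(\lambda+1)$ per derivative produces the $\|\omega_{xx}\|,\|\omega_{xt}\|$ and $\|\omega_{xxx}\|,\|\omega_{xxt}\|$ estimates; differentiating the equation in $t$ and pairing with appropriately weighted $z$, $z_t$, $z_{xt}$, etc., gives the improved rates for $z=\omega_t$ and its derivatives. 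Feeding all these $L^2$ bounds into the Sobolev inequality on $\mathbb{R}^+$ (and using $0\le\lambda<1$) recovers $N(T)\le\epsilon/2$, closing the a priori assumption and completing the continuation.

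The main obstacle is quantitative rather than structural: to make the scheme close one must verify that, with the chosen weight $\beta=\lambda$, every forcing exponent $\gamma(\beta,\lambda)$ generated by $F_2$ is strictly less than $-1$ (so the sources are time-integrable and Gronwall applies) and that the pieces ``borrowed'' into the dissipation are genuinely small. Because $\bar v$ is honestly self-similar here, $p(\bar v)_x$ decays only like $(1+t)^{-(\lambda+1)/4}$ and $p(\bar v)_{xt}$ like $(1+t)^{-(\lambda+1)/4-1}$ in $L^2(\mathbb{R}^+)$ --- slower, in the relevant weighted norm, than the indirectly constructed Dirichlet wave of Section \ref{S2.2} --- so the bookkeeping of exponents is tighter and produces precisely the restriction $0\le\lambda<\tfrac{1}{7}$ of the statement (the borderline $\lambda=\tfrac{1}{7}$ and the range $\tfrac{1}{7}<\lambda<1$ being handled, exactly as in Theorems \ref{Thm 1-0}--\ref{Thm 1-1}, by inserting an $\varepsilon$-loss or by allowing $\beta<\lambda$ at the expense of a dissipation integral that grows in $t$). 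Apart from this careful choice of weights, the Neumann case requires no new idea relative to the Dirichlet analysis: the only genuine change is which boundary terms vanish ($\omega_x(0,t)=0$ in place of $\omega(0,t)=0$), and this is compatible with every integration by parts used above.
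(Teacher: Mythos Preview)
Your proposal is correct and follows precisely the approach indicated in the paper, which simply states that Theorems \ref{Thm t2}--\ref{Thm t4} ``can be proved by using the similar method as Theorems \ref{Thm 1}--\ref{Thm 1-1} and the details of the proofs are omitted here.'' You have in fact supplied more of the mechanism than the paper does, correctly identifying that the self-similar wave of \eqref{t3.5} has the (slower) Cauchy-problem decay of \cite{Cui-Yin-Zhang-Zhu2016} rather than the rates of Proposition \ref{Prop a1}, that the Neumann boundary relations $\omega_x(0,t)=\omega_{tx}(0,t)=\cdots=0$ and $(p(\omega_x+\bar v+\hat v)-p(\bar v))|_{x=0}=0$ annihilate all boundary terms, and that this slower decay is exactly what forces the cutoff down to $\lambda=\tfrac{1}{7}$.
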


\begin{theorem}\label{Thm t3}{\bf(The case of $v_0(0)\neq v_+$ and $ \frac{1}{7}<\lambda <1$)}
For any $\alpha>0$ and $v_0-v_+\in L^1(\mathbb{R}^+)$, assume that both $\delta_1=|v_+-v_0(0)|+|u_+-u_0(0)|$ and
 $ \|\omega_0\|_{H^3(\mathbb{R}^+)}+\|z_0\|_{H^2(\mathbb{R}^+)}$ are sufficiently small. Then,
there exists a unique time-global solution of the problem \eqref{t3.15}-\eqref{t3.16} satisfying
\begin{equation}\notag
  \omega\in C^{k}((0,\infty),H^{3-k}(\mathbb{R}^+)),\qquad k=0,1,2,3,
\end{equation}
\begin{equation}\notag
  \omega_t\in C^{k}((0,\infty),H^{2-k}(\mathbb{R}^+)),\qquad k=0,1,2,
\end{equation}
furthermore, we have
\begin{equation}\notag
\begin{split}
     &
      \sum_{k=0}^3(1+t)^{(\lambda +1)k +\frac{1}{2}-\frac{7\lambda}{2}}
      \|\partial_x^k\omega(\cdot,t)\|_{L^2}^2
      +
         \sum_{k=0}^2(1+t)^{(\lambda+1)k +\frac{5}{2}-\frac{7\lambda}{2}} \|\partial_x^k\omega_t(\cdot,t)\|_{L^2}^2\\
                 \leq
                       &
                          C(\|\omega_0\|_{H^3}^2+\|z_0\|_{H^2}^2+\delta),
\end{split}
\end{equation}
and for any $\beta\in(\frac{1}{2}-\frac{5\lambda}{2},\lambda),$ we
have
\begin{equation}\notag
\begin{split}
         &
                \int_0^t \bigg[ \sum_{j=0}^3(1+s)^{(\lambda +1)(j-1)+\beta}\|\partial_x^j\omega(\cdot,s)\|_{L^2}^2
                 +
                 \sum_{j=0}^2(1+s)^{(\lambda +1)j+\beta-\lambda+1} \|\partial_x^j\omega_t(\cdot,s)\|_{L^2}^2\bigg]ds\\
                    \leq
                       &
                       C(1+t)^{\beta+\frac{5\lambda}{2}-\frac{1}{2}}
                           (\|\omega_0\|_{H^3}^2+\|z_0\|_{H^2}^2+\delta_1).
\end{split}
\end{equation}
\end{theorem}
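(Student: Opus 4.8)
The plan is to run the weighted energy method of Section~\ref{S2.3} on the Neumann reformulation \eqref{t3.15}--\eqref{t3.17}, the only structural changes being that the boundary terms produced by the integrations by parts now vanish because of the Neumann conditions $\omega_x(0,t)=\omega_{tx}(0,t)=\omega_{ttx}(0,t)=\big(p(\omega_x+\bar v+\hat v)-p(\bar v)\big)\big|_{x=0}=0$, and that the diffusion wave $\bar v$ of \eqref{t3.5} is \emph{self-similar}, so its derivatives decay at the (slower) self-similar rates recorded in \cite{Cui-Yin-Zhang-Zhu2016} rather than at those of Proposition~\ref{Prop a1}. First I would state the local existence and continuation result for \eqref{t3.15}--\eqref{t3.16} (the exact analogue of Proposition~\ref{prop 3}, proved as in \cite{Cui-Yin-Zhang-Zhu2016}) and introduce the a~priori assumption
\[
N(T):=\sup_{0<t<T}\Big\{\|\omega_x(\cdot,t)\|_{L^\infty}+(1+t)\|\omega_{xt}(\cdot,t)\|_{L^\infty}+(1+t)^{\frac{\lambda+1}{2}}\|\omega_{xx}(\cdot,t)\|_{L^\infty}\Big\}\le\epsilon,
\]
with $0<\epsilon\ll1$, under which all estimates below are derived; the theorem then follows by the standard continuation argument once the a~priori bounds recover $N(T)\le\epsilon/2$.

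For the lowest-order estimate I would multiply \eqref{t3.15} by $(1+t)^{\beta}\omega$ and by $(1+t)^{\beta+\lambda}\omega_t$, integrate over $\mathbb{R}^+$, and combine the two identities with a large fixed multiplier $h$ exactly as in \eqref{3.15}, obtaining a coercive functional $\mathcal{H}_1(t)\sim(1+t)^{\beta-\lambda}\|\omega\|^2+(1+t)^{\beta+\lambda}\big(\|\omega_x\|^2+\|\omega_t\|^2\big)$ together with the dissipation $(1+t)^{\beta-\lambda-1}\|\omega\|^2+(1+t)^{\beta}\big(\|\omega_x\|^2+\|\omega_t\|^2\big)$. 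The only difference from Section~\ref{S2.3} is in bounding $\int(1+t)^{\bullet}F_2\,\omega\,dx$ and $\int(1+t)^{\bullet}F_2\,\omega_t\,dx$ from \eqref{t3.17}: feeding in the self-similar decay of $p(\bar v)_x$ and $p(\bar v)_{xt}$ and the exponential decay of $\hat v$, the forcing now produces terms of size $C\delta_1(1+t)^{\beta+\frac{5\lambda}{2}-\frac32}$ (plus strictly lower-order ones), rather than the $(1+t)^{\beta+\frac{3\lambda}{2}-\frac52}$ that appeared in the Dirichlet case. One then fixes $\beta\in\big(\tfrac12-\tfrac{5\lambda}{2},\lambda\big)$: the bound $\beta<\lambda$ keeps the coefficient of the $(1+t)^{\beta-\lambda-1}\|\omega\|^2$ term positive, while $\beta>\tfrac12-\tfrac{5\lambda}{2}$ makes the forcing exponent exceed $-1$, so that after integration in $t$ it is controlled by $C(1+t)^{\beta+\frac{5\lambda}{2}-\frac12}(\|\omega_0\|_{H^3}^2+\|z_0\|_{H^2}^2+\delta_1)$. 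Crucially, this interval is nonempty precisely when $\lambda>\tfrac17$, which is why the cut-off here coincides with that of the Cauchy problem and differs from the Dirichlet value $\tfrac35$. A Gronwall argument on $[T_1,t]$ with $T_1$ large, together with the local estimates and the smallness of $\epsilon,\delta_1$ to absorb the quadratic term $\int(1+t)^{\beta+\lambda}\big(\int_{\bar v}^{\omega_x+\bar v+\hat v}p(s)\,ds-p(\bar v)\omega_x-\tfrac{p'(\bar v)}{2}\omega_x^2\big)dx$, then yields the $k=0$ pieces of the two stated estimates.

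Next I would improve and iterate: as in Lemma~\ref{Lemma 2}, multiplying the $\omega_t$-identity by $(1+t)^{1-\lambda}$ gains one extra power of $(1+t)$ on $\|\omega_x\|^2+\|\omega_t\|^2$; then, differentiating \eqref{t3.15} once and twice in $x$ and testing against the suitably weighted multipliers $(1+t)^{\bullet}\omega_{xt}$, $-(1+t)^{\bullet}\omega_{xx}$, $(1+t)^{\bullet}\omega_{xxt}$, $-(1+t)^{\bullet}\omega_{xxx}$ — the Neumann analogues of Lemmas~\ref{Lemma 3}--\ref{Lemma 4} — using the already-established lower-order bounds and the self-similar decay of $\bar v$ to control the derivatives of $F_2$, with the same $\beta$. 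Each spatial derivative gains the weight $(1+t)^{\lambda+1}$, producing $\sum_{k=0}^3(1+t)^{(\lambda+1)k+\frac12-\frac{7\lambda}{2}}\|\partial_x^k\omega\|^2$ and $\sum_{k=0}^2(1+t)^{(\lambda+1)k+\frac52-\frac{7\lambda}{2}}\|\partial_x^k\omega_t\|^2$ together with the claimed time-integrals; since $z=\omega_t$, the $\omega_t$-estimates come for free. Then Sobolev embedding applied to the $k=2,3$ bounds gives $\|\omega_x\|_{L^\infty}\le\tfrac\epsilon2$, $(1+t)\|\omega_{xt}\|_{L^\infty}\le\tfrac\epsilon2$ and $(1+t)^{\frac{\lambda+1}{2}}\|\omega_{xx}\|_{L^\infty}\le\tfrac\epsilon2$ whenever $\|\omega_0\|_{H^3}+\|z_0\|_{H^2}+\delta_1\ll1$, closing $N(T)$ and completing the proof.

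The main obstacle I expect is the weight bookkeeping in the forcing terms: one must check at \emph{every} differentiation level that the genuinely slower self-similar decay of $\bar v$ still leaves the forcing exponent below the threshold at which the weighted energy would grow faster than $(1+t)^{\beta+\frac{5\lambda}{2}-\frac12}$, and that a single admissible $\beta\in\big(\tfrac12-\tfrac{5\lambda}{2},\lambda\big)$ works simultaneously for the first-, second-, and third-order estimates — this is exactly where the constraint $\lambda>\tfrac17$ is forced and where the $\tfrac52-\tfrac{7\lambda}{2}$ shifts in the final exponents originate. (For $\lambda<\tfrac17$, Theorem~\ref{Thm t2}, this interval degenerates and one instead takes $\beta=\lambda$ to obtain the clean rates there.)
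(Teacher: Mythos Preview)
Your proposal is correct and follows exactly the approach the paper takes: the paper states explicitly that Theorems~\ref{Thm t2}--\ref{Thm t4} ``can be proved by using the similar method as Theorems~\ref{Thm 1}--\ref{Thm 1-1}'' and omits all details. You have accurately identified the two structural changes (Neumann boundary terms vanish; the self-similar $\bar v$ has the slower decay of \cite{Cui-Yin-Zhang-Zhu2016}), and correctly traced the forcing exponent $(1+t)^{\beta+\frac{5\lambda}{2}-\frac{3}{2}}$ and the resulting admissible window $\beta\in(\tfrac12-\tfrac{5\lambda}{2},\lambda)$, nonempty precisely for $\lambda>\tfrac17$, which is exactly the cut-off shift the paper highlights in Remark~\ref{remark 1} and in the discussion preceding Theorems~\ref{Thm t2}--\ref{Thm t4}.
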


\begin{theorem}\label{Thm t4}{\bf(The case of $v_0(0)\neq v_+$ and $\lambda =\frac{1}{7}$)}
For any $\alpha>0$ and $v_0-v_+\in L^1(\mathbb{R}^+)$, assume that both $\delta_1=|v_+-v_0(0)|+|u_+-u_0(0)|$ and
 $ \|\omega_0\|_{H^3(\mathbb{R}^+)}+\|z_0\|_{H^2(\mathbb{R}^+)}$ are sufficiently small. Then,
there exists a unique time-global solution of the problem \eqref{t3.15}-\eqref{t3.16} satisfying
\begin{equation}\notag
  \omega\in C^{k}((0,\infty),H^{3-k}(\mathbb{R}^+)),\qquad k=0,1,2,3,
\end{equation}
\begin{equation}\notag
  \omega_t\in C^{k}((0,\infty),H^{2-k}(\mathbb{R}^+)),\qquad k=0,1,2,
\end{equation}
furthermore, we have for any sufficiently small $\varepsilon>0$
\begin{equation}\notag
\begin{split}
     &
      \sum_{k=0}^3(1+t)^{\frac{8k}{7} }
      \|\partial_x^k\omega(\cdot,t)\|_{L^2}^2
      +
         \sum_{k=0}^2(1+t)^{\frac{8k}{7}+2} \|\partial_x^k\omega_t(\cdot,t)\|_{L^2}^2
          \\
         &
            +
                \int_0^t \bigg[ \sum_{j=1}^3(1+s)^{\frac{8j}{7}-1}\|\partial_x^j\omega(\cdot,s)\|_{L^2}^2
                 +
                 \sum_{j=0}^2(1+s)^{\frac{8j}{7}+1} \|\partial_x^j\omega_t(\cdot,s)\|_{L^2}^2\bigg]ds
         \\
                 \leq
                       &
                          C(1+t)^\varepsilon(\|\omega_0\|_{H^3}^2+\|z_0\|_{H^2}^2+\delta_1).
\end{split}
\end{equation}
\end{theorem}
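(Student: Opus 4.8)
The plan is to follow the same route as in the proof of Theorems \ref{Thm 1}--\ref{Thm 1-1} (Section \ref{S2.3}) and in the Cauchy problem \cite{Cui-Yin-Zhang-Zhu2016}, now applied to the reformulated wave equation \eqref{t3.15}--\eqref{t3.17} with the Neumann boundary condition $\omega_x(0,t)=0$. The key input is that the nonlinear diffusion wave $(\bar v,\bar u)(x,t)$ defined in \eqref{t3.5} is the \emph{self-similar} profile $\phi\!\left(x/(1+t)^{(\lambda+1)/2}\right)$ connecting $v_0(0)$ to $v_+$, whose dissipative estimates are exactly those established for the Cauchy problem in \cite{Cui-Yin-Zhang-Zhu2016}; in particular the norms $\|\partial_t^l\partial_x^k(\bar v-v_+)\|_{L^p(\mathbb{R}^+)}$ behave with the corresponding self-similar powers, and $\hat v$ decays exponentially by the obvious analogue of Proposition \ref{Prop a2}. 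A feature that is cleaner than in the Dirichlet case is that all boundary terms produced by integration by parts vanish, since $\omega_x(0,t)=\omega_{tx}(0,t)=\omega_{ttx}(0,t)=\big(p(\omega_x+\bar v+\hat v)-p(\bar v)\big)\big|_{x=0}=0$, so the weighted energy identities take the same form as on the whole line.

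The estimates are then carried out in the usual hierarchy under the a priori assumption \eqref{xian}. First I would derive the lower-order estimate: multiply \eqref{t3.15} by $(1+t)^\beta\omega$ and by $(1+t)^{\beta+\lambda}\omega_t$, combine the two with a sufficiently large multiplier $h$, absorb the cross terms and the contributions of $F_2$ via the decay of $\bar v$, $\hat v$ and the smallness of $\epsilon,\delta_1$, and close by Gronwall's inequality; this is the analogue of Lemma \ref{Lemma 1}. Differentiating \eqref{t3.15} once and twice in $x$ and repeating the weighted-multiplier scheme yields the first- and second-order bounds (analogues of Lemmas \ref{Lemma 3}--\ref{Lemma 4}), and exploiting $z=\omega_t$ together with $t$-differentiation gives the improved decay of $z,z_x,z_t,\dots$ (analogues of Lemmas \ref{Lemma 5}--\ref{Lemma 6}). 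Combined with the local existence result (the analogue of Proposition \ref{prop 3}, proved exactly as in \cite{Cui-Yin-Zhang-Zhu2016}), the Sobolev inequality closes \eqref{xian} and gives the global solution; collecting the weighted bounds produces the asserted estimate, now with the weight exponents $(\lambda+1)k=\frac{8k}{7}$ at $\lambda=\frac17$.

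The genuinely new point, and the main obstacle, is the bookkeeping at the critical exponent $\lambda=\frac17$. Because the self-similar wave $\bar v-v_+$ does not decay in $L^\infty$ (it matches the nonzero boundary value $v_0(0)\ne v_+$) and its transition layer spreads like $(1+t)^{(\lambda+1)/2}$, the $L^2$-type norms of $\bar v-v_+$, $\bar v_x$, $\bar v_t$ are larger by a factor $(1+t)^{(\lambda+1)/2}$ than in the Dirichlet case, which is precisely why the cut-off moves from $\lambda=\frac35$ to $\lambda=\frac17$. For $\lambda<\frac17$ (Theorem \ref{Thm t2}) one may take $\beta=\lambda$ together with an auxiliary exponent $\kappa>1$ small enough that every source integral $\int_0^t(1+s)^{\cdots}ds$ converges, exactly as in Case~1 of Lemma \ref{Lemma 1}; at $\lambda=\frac17$ the relevant threshold $\frac12-\frac{5\lambda}{2}$ vanishes, the admissible interval for $\kappa$ degenerates, and the borderline source integral is only $O(\log(1+t))=O_\varepsilon\big((1+t)^\varepsilon\big)$. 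I would therefore run the identical Gronwall scheme while retaining the factor $(1+t)^\varepsilon$ on the right-hand side (choosing $\beta$, $\kappa$, $\nu$ so that only logarithmically divergent source terms survive), and then check that this $\varepsilon$-loss is confined to the right-hand constant and affects neither the sharp weights on the left-hand side nor the closure of \eqref{xian}. This is the exact counterpart of the passage from Theorems \ref{Thm 1}--\ref{Thm 1-0} to the critical Theorem \ref{Thm 1-1}, so the remaining work is a routine, if delicate, adaptation of the computations in Section \ref{S2.3}.
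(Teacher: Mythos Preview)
Your proposal is correct and matches the paper's approach exactly: the paper simply states that Theorems \ref{Thm t2}--\ref{Thm t4} ``can be proved by using the similar method as Theorems \ref{Thm 1}--\ref{Thm 1-1} and the details of the proofs are omitted here,'' and your outline---running the weighted energy hierarchy of Section \ref{S2.3} with the self-similar diffusion wave of \eqref{t3.5}, using the Neumann boundary conditions to kill boundary terms, and absorbing the borderline logarithmic divergence at $\lambda=\tfrac{1}{7}$ into the factor $(1+t)^\varepsilon$---is precisely that similar method.
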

\vspace{6mm}
Theorems \ref{Thm t2}-\ref{Thm t4} can be proved by using the similar method as Theorems \ref{Thm 1}-\ref{Thm 1-1}  and the details of the proofs are omitted here.

\subsection{The case of $v_0(0)= v_+$}

Let \begin{equation}\label{n4.1}
(\bar v,\bar u)(x,t)\equiv (v_+,0).
\end{equation}
Similar to the case as above, the correction functions are defined by
\begin{equation}\notag
\hat{v}(x,t)=-(u_0(0)-u_+) m_0(x)B(t),
\end{equation}
and
\begin{equation}\notag
\hat{u}(x,t)=\bigg[u_++(u_0(0)-u_+)\int_{x}^\infty m_0(y)dy\bigg]\beta(t),
\end{equation}
where $m_0(x)$ is a smooth function with compact support such that
\begin{equation}\notag
\int_{0}^\infty m_0(x)dx=1, \qquad \mbox{supp}\  m_0\subset \mathbb{R}^+.
\end{equation}
Therefore, $(\hat v,\hat u)(x,t)$ satisfies
\begin{equation}\label{n4.4}
\left\{\begin{array}{l}
\partial_t  \hat{v}
  -
   \partial_x \hat{ u}=0,\\[2mm]
      \partial_t\hat{u}
     =\displaystyle
     -\frac{\alpha}{(1+t)^\lambda}\hat{u},\\[2mm]
     (\hat v, \hat u_x)(0,t)=(0,0), \qquad \hat u(0,t)=u_+\beta(t),\\[2mm]
     (\hat v, \hat u)(\infty,t)=(0,u_+\beta(t)).
 \end{array}
        \right.
\end{equation}
It follows from \eqref{t3.1} and \eqref{n4.4} that
\begin{equation}\label{n4.5}
\left\{\begin{array}{l}
\partial_t  (v-v_+-\hat{v})
  -
   \partial_x (u- \hat{u})=0,\\[2mm]
      \partial_t (u- \hat{u})+\partial_x  (p(v)-p( {v}_+))
           +\displaystyle
              \frac{\alpha}{(1+t)^\lambda} (u-\hat{u})=0.
 \end{array}
        \right.
\end{equation}
The definition of
\begin{equation}\notag%\label{n4.7}
\omega(x,t)=-\int_x^{\infty} (v(y,t)- v_+-\hat{v}(y,t))dy,
\end{equation}
and
\begin{equation}\notag%\label{n4.8}
z(x,t)= u(x,t)-\hat{u}(x,t),
\end{equation}
give the reformulated problem
\begin{equation}\label{n4.9}
\left\{\begin{array}{l}
  \omega_t
  -
   z=0,\\[2mm]
       z_t+ (p(\omega_x+v_++\hat v)-p(\bar{v}))_x
           +\displaystyle
              \frac{\alpha}{(1+t)^\lambda} z=-\bar u_t,\\[2mm]
              \omega_x(0,t)=0, \qquad z_x(0,t)=0,
 \end{array}
        \right.
\end{equation}
with initial data
\begin{equation}\label{n4.10}
  (\omega,z)\mid_{t=0}=(\omega_0,z_0)(x),
\end{equation}
where
\begin{equation}\notag
\left\{\begin{array}{l}
  \ds \omega_0(x)
    =-\int_x^{\infty}  (v_0(y)-v_+-\hat{v}(y,0))dy,\\
   \ds z_0(x)
       =u_0(x)-\hat{u}(x,0).
\end{array}
\right.
\end{equation}
Rewrite  \eqref{n4.9} and \eqref{n4.10} as
\begin{equation}\label{n4.12}
      \omega_{tt}+ (p'(\bar{v})\omega_x)_x
           +\displaystyle
              \frac{\alpha}{(1+t)^\lambda} \omega_t=F_3,
\end{equation}
with initial data
\begin{equation}\label{n4.13}
  (\omega,\omega_t)\mid_{t=0}=(\omega_0,z_0)(x),
\end{equation}
where
\begin{equation}\label{n4.14}
\begin{split}
  F_3
   =  -
            (p(\omega_x+v_++\hat v)-p(\bar{v})-p'(v_+)\omega_x)_x.
 \end{split}
\end{equation}

%%%%%%%%%%%%%%%%%%%%%%%%%%%%%%%%%%%%%%%%%%%%%%%%%%%%%%%%%%%%%%%%%%%%%%%%%%%%%%%%%%%%%%%%%%%%%%
We note that the nonlinear term  $F_3$ in \eqref{n4.14} does not include the bad term  $\frac{1}{\alpha}(1+t)^\lambda p(\bar v)_{xt}
      $, $\frac{\lambda}{\alpha} { (1+t)^{\lambda-1} } { p(\bar v)_x}$ in \eqref{t3.17}. Thus, applying the same method as Theorems \ref{Thm t2}-\ref{Thm t4}, we can obtain
 our final  result.
\begin{theorem}\label{Thm n3}{\bf(The case of $v_0(0)= v_+$)}
For any $\alpha>0$ and $v_0-v_+\in L^1(\mathbb{R}^+)$,  assume that both $\delta_2=|u_+-u_0(0)|$ and $\|\omega_0\|_{H^3}+\|z_0\|_{H^2}$ is sufficiently small.  Then, for any $0\leq\lambda<1$,
there exists a unique time-global solution of the problem  \eqref{n4.12}-\eqref{n4.13} satisfying
\begin{equation}\notag
  \omega\in C^{k}((0,\infty),H^{3-k}(\mathbb{R}^+)),\qquad k=0,1,2,3,
\end{equation}
\begin{equation}\notag
  \omega_t\in C^{k}((0,\infty),H^{2-k}(\mathbb{R}^+)),\qquad k=0,1,2,
\end{equation}
furthermore, we have
\begin{equation}\notag
\begin{split}
&
      \sum_{k=0}^3(1+t)^{(\lambda +1)k }\|\partial_x^k\omega(\cdot,t)\|_{L^2}^2
      +
         \sum_{k=0}^2(1+t)^{(\lambda+1)k +2} \|\partial_x^k\omega_t(\cdot,t)\|_{L^2}^2
          \\
         &
            +
                \int_0^t \bigg[ \sum_{j=1}^3(1+s)^{(\lambda +1)j-1}\|\partial_x^j\omega(\cdot,s)\|_{L^2}^2
                 +
                 \sum_{j=0}^2(1+s)^{(\lambda +1)j+1} \|\partial_x^j\omega_t(\cdot,s)\|_{L^2}^2\bigg]ds\\
                    \leq
                       &
                          C(\|\omega_0\|_{H^3}^2+\|z_0\|_{H^2}^2+\delta_2).
\end{split}
\end{equation}
\end{theorem}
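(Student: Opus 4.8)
The plan is to prove Theorem~\ref{Thm n3} by the same weighted energy method used in Section~\ref{S2.3}, taking full advantage of the fact that for $v_0(0)=v_+$ the diffusion wave degenerates to the constant state $(\bar v,\bar u)\equiv(v_+,0)$. Consequently $\bar u_t\equiv 0$ in $(\ref{n4.9})_2$ and the source term in \eqref{n4.12} reduces to
\[
F_3=-\big(p(\omega_x+v_++\hat v)-p(v_+)-p'(v_+)\omega_x\big)_x ,
\]
so that the ``bad'' linear inhomogeneities $\frac1\alpha(1+t)^\lambda p(\bar v)_{xt}$ and $\frac\lambda\alpha(1+t)^{\lambda-1}p(\bar v)_x$ appearing in \eqref{t3.17} (and in \eqref{1.21}) are simply absent. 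Together with the exponential decay of $\hat v$ and $\hat v_t$ furnished by the analogue of Proposition~\ref{Prop a2} (with $|u_+|$ replaced by $\delta_2=|u_0(0)-u_+|$), every forcing term occurring in the energy identities is either quadratic in $(\omega_x,\omega_{xt},\omega_{xx})$ — hence absorbable under the a priori smallness — or exponentially small in $t$. This is precisely why no cut-off value of $\lambda$ arises here: one may attach to each energy functional the ``optimal'' time weight $(1+t)^{(\lambda+1)k}$, uniformly for all $0\le\lambda<1$, rather than being forced to trade it off against a $\delta\,(1+t)^{\beta+\frac{3\lambda}{2}-\frac52}$-type term as in the Dirichlet case.

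First I would record the local existence in $C^k((0,T],H^{3-k})$ by the standard iteration argument of \cite{Cui-Yin-Zhang-Zhu2016}, obtaining the analogue of Proposition~\ref{prop 3} for \eqref{n4.12}-\eqref{n4.13}, and then set up the continuation argument under the a priori assumption
\[
N(T):=\sup_{0<t<T}\Big\{\|\omega_x(\cdot,t)\|_{L^\infty}+(1+t)\|\omega_{xt}(\cdot,t)\|_{L^\infty}+(1+t)^{\frac{\lambda+1}{2}}\|\omega_{xx}(\cdot,t)\|_{L^\infty}\Big\}\le\epsilon .
\]
The Neumann boundary relations $\omega_x(0,t)=\omega_{tx}(0,t)=\omega_{ttx}(0,t)=\big(p(\omega_x+v_++\hat v)-p(v_+)\big)|_{x=0}=0$, etc., guarantee that all boundary contributions produced by the integrations by parts below vanish (note in particular $\hat v(0,t)=0$ from \eqref{n4.4}).

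The core is the hierarchy of weighted energy estimates, carried out exactly as in Lemmas~\ref{Lemma 1}--\ref{Lemma 6} but now with the choice $\beta=\lambda$ admissible for every $0\le\lambda<1$. Concretely I would: (i) multiply \eqref{n4.12} by $(1+t)^\lambda\omega$ and by $(1+t)^{2\lambda}\omega_t$, integrate over $\mathbb{R}^+$, form the combination with the constant $h=6/\alpha$ as in \eqref{3.15c1}, absorb the quadratic $F_3$-terms by the smallness of $\epsilon$ and the $\hat v$-terms by Proposition~\ref{Prop a2}, and apply Gronwall's inequality on $[T_0,t]$ to get $\|\omega\|^2+(1+t)^{2\lambda}(\|\omega_x\|^2+\|\omega_t\|^2)+\int_0^t(1+s)^\lambda(\|\omega_x\|^2+\|\omega_t\|^2)\,ds\le C(\|\omega_0\|_{H^1}^2+\|z_0\|^2+\delta_2)$; (ii) bootstrap by multiplying the $\omega_t$-identity by $(1+t)^{1-\lambda}$ to upgrade this to $(1+t)^{\lambda+1}(\|\omega_x\|^2+\|\omega_t\|^2)+\int_0^t(1+s)\|\omega_t\|^2\,ds\le C(\|\omega_0\|_{H^1}^2+\|z_0\|^2+\delta_2)$; (iii) differentiate \eqref{n4.12} once and twice in $x$ and repeat the same two-multiplier-plus-bootstrap scheme to control $(1+t)^{2\lambda+2}(\|\omega_{xx}\|^2+\|\omega_{xt}\|^2)$ and $(1+t)^{3\lambda+3}(\|\omega_{xxx}\|^2+\|\omega_{xxt}\|^2)$ together with their dissipation integrals; (iv) exploit $z=\omega_t$ with additional powers of $(1+t)$ to obtain the improved rates $(1+t)^2\|z\|^2$, $(1+t)^{\lambda+3}\|z_x\|^2$, $(1+t)^{2\lambda+4}\|z_{xx}\|^2$ needed in the Sobolev interpolation. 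Summing these produces exactly the asserted estimate, with weights $(1+t)^{(\lambda+1)k}$ on $\partial_x^k\omega$ and $(1+t)^{(\lambda+1)k+2}$ on $\partial_x^k\omega_t$ and right-hand side $C(\|\omega_0\|_{H^3}^2+\|z_0\|_{H^2}^2+\delta_2)$.

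Finally I would close the a priori assumption: using $\|f\|_{L^\infty}^2\le C\|f\|\,\|f_x\|$ together with the rates just obtained gives $\|\omega_x\|_{L^\infty}\le C(1+t)^{-3/4}(\|\omega_0\|_{H^3}+\|z_0\|_{H^2}+\delta_2^{1/2})\le\epsilon/2$, $\|\omega_{xt}\|_{L^\infty}\le\frac\epsilon2(1+t)^{-1}$ and $\|\omega_{xx}\|_{L^\infty}\le\frac\epsilon2(1+t)^{-(\lambda+1)/2}$ once $\|\omega_0\|_{H^3}+\|z_0\|_{H^2}+\delta_2\ll1$, so the continuation argument yields the global solution. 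I expect the only mildly delicate point — and it is no worse than in Section~\ref{S2.3} — to be the bookkeeping in step (i): choosing $T_0$ large and $h$ so that after Gronwall the ``borrowed'' terms $C\epsilon\int(1+t)^{2\lambda}|\omega_x|^2$ and $\frac14\int\omega^2$ are reabsorbed on the left via the local estimate. No new mechanism appears, because the constant diffusion wave annihilates every source term that forced the restriction $\lambda<\frac35$ (resp. $\lambda<\frac17$) in the Dirichlet (resp. non-constant Neumann) problem.
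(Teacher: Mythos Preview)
Your proposal is correct and follows essentially the same approach as the paper. The paper's own proof consists only of the observation that $F_3$ lacks the bad terms $\frac{1}{\alpha}(1+t)^\lambda p(\bar v)_{xt}$ and $\frac{\lambda}{\alpha}(1+t)^{\lambda-1}p(\bar v)_x$ and the sentence ``Thus, applying the same method as Theorems \ref{Thm t2}-\ref{Thm t4}, we can obtain our final result''; your outline simply fills in the details of that method, with the same choice $\beta=\lambda$ now valid for all $0\le\lambda<1$ and the same closing of the a priori assumption via Sobolev interpolation.
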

\begin{remark}\label{remark 6}
 It should be noted that there is no    cut-off point of the convergence rate in this case.
\end{remark}

\vspace{6mm}

\noindent {\bf Acknowledgements:} The research was
supported by the National Natural Science Foundation of China
\#11331005, \#11771150, \#11601164 and \#11601165.

\bigbreak

%
%
%
%\vspace{8mm}
{\small

\bibliographystyle{plain}

}

\end{document}